\documentclass[a4paper, 10pt, reqno]{amsart}
\usepackage{amsmath, amscd}
\usepackage{amssymb, color}
\usepackage[colorlinks]{hyperref}
\definecolor{linkblue}{RGB}{1,1,190}
\definecolor{citered}{RGB}{190,1,1}
\hypersetup{
	linkcolor=linkblue,
	urlcolor=linkblue,
	citecolor=citered
}
\usepackage{comment}

\makeatletter
\def\namedlabel#1#2{\begingroup
    #2%
    \def\@currentlabel{#2}%
    \phantomsection\label{#1}\endgroup
}
\makeatother

\def\doi#1{{\small\href{https://doi.org/#1}{\path{doi:#1}}}}
\def\arxiv#1{{\small\href{http://www.arxiv.org/abs/#1}{\path{arXiv:#1}}}}
\def\url#1{{\small\href{#1}{\path{#1}}}}

\theoremstyle{plain}
\newtheorem{theorem}{\bf Theorem}[section]
\newtheorem{proposition}[theorem]{\bf Proposition}
\newtheorem{lemma}[theorem]{\bf Lemma}

\theoremstyle{definition}
\newtheorem{example}[theorem]{\bf Example}

\newtheorem{remark}[theorem]{\bf Remark}

\newcommand{\N}{\mathbb N}

\newcommand{\bdot}{\boldsymbol{\cdot}}

 \DeclareMathOperator{\ord}{ord}

 \DeclareMathOperator{\supp}{supp}

 \DeclareMathOperator{\Syl}{Syl}

\newcommand{\red}{{\text{\rm red}}}
\renewcommand{\t}{\, | \,}

\numberwithin{equation}{section}

\begin{document}

\title{The catenary degree of monoids of product-one sequences}

\author{Jun Seok Oh}
\address{Department of Mathematics Education, Jeju National University, Jeju 63243, Republic of Korea}
\email{junseok.oh@jejunu.ac.kr}

\subjclass{11B30, 11P70, 11R27, 13F45, 20D15, 20D60, 20M13}
\keywords{non-unique factorizations, seminormal monoids, sets of distances, catenary degree, product-one sequences, minimal non-abelian groups}

\thanks{This work was supported by the research grant of Jeju National University in 2026}

\begin{abstract}
Let $G$ be a (multiplicatively written) finite group.
A sequence over $G$ is a finite collection of terms from $G$, where repetition is allowed and the order is disregarded.
A product-one sequence is a sequence whose terms can be ordered such that their product in $G$ equals the identity element of $G$.
The set $\mathcal B (G)$ of all product-one sequences over $G$, endowed with the concatenation of sequences as the operation, is a finitely generated C-monoid; in particular, it is atomic, i.e., every non-unit element can be written as a finite product of atoms.
The study of $\mathcal B (G)$ is of fundamental importance, as its combinatorial, algebraic, and arithmetic properties play a crucial role across various branches of mathematics, most notably in invariant theory and factorization theory.
While the arithmetic of the monoid $\mathcal B (G)$ is well understood in the abelian setting (in which case $\mathcal B (G)$ is a Krull monoid), little is known in the non-abelian setting because of the substantial structural complexity involved.
In this paper, we study the arithmetic invariants of the monoid $\mathcal B (G)$ for non-abelian groups, focusing in particular on the catenary degree.
The catenary degree $\mathsf c (G)$ of the monoid $\mathcal B (G)$ is defined as the smallest integer $N$ such that any two factorizations of an element $S \in \mathcal B (G)$ can be concatenated by a chain of factorizations in which adjacent steps differ by replacing at most $N$ atoms.
Extending the methods from arithmetic combinatorics to the non-abelian setting, we explicitly characterize all finite groups with catenary degree at most 3, and we investigate an infinite class of finite groups whose monoids of product-one sequences are seminormal and possess well-behaved arithmetic structures.
Furthermore, we show that a specific non-abelian group in this class has catenary degree 4.
\end{abstract}

\maketitle


\section{Introduction} \label{1}
\medskip

The study of sequences over a finite group $G$, along with the algebraic structures they generate, stands as a classical and central topic in arithmetic combinatorics with profound implications in various branches of mathematics.
In particular, the monoid of product-one sequences over $G$, denoted by $\mathcal B (G)$, has deep connections to several fields, most notably invariant theory and factorization theory.
In invariant theory, evaluating the Noether number $\boldsymbol{\beta} (G)$ and the separating Noether number $\boldsymbol{\beta}_{\textnormal{sep}} (G)$ of polynomial invariant rings is a fundamental problem.
It has been established that these invariants are strictly governed by the combinatorial invariants of $\mathcal B (G)$, making the study of product-one sequences indispensable (see \cite{Sc91,Cz-Do-Ge16,Cz-Do-Sz18,Ha-Zh19,Do-Sc24,Do-Sc25,Sc25a,Sc25b,Sc-Zha-Zh26,Sc-Zha-Zh27,Hu26} for an interplay between the (separating) Noether number and Davenport constant).
Simultaneously, in factorization theory, the monoid $\mathcal B (G)$ acts as a foundational model for understanding the algebraic and arithmetic properties of non-unique factorizations in transfer Krull domains (for instance, the rings of algebraic integers of algebraic number fields, the holomorphy rings in algebraic function fields, the maximal orders in central simple algebras, or the modules over hereditary noetherian prime(HNP) rings) and C-domains (for instance, non-principal orders in algebraic number fields, as well as in algebraic function fields, or non-local semilocal noetherian domains), providing a tangible combinatorial analogue of more complex algebraic structures (see \cite{Ge-Gr13,Gr13b,Oh19,Oh20,Oh-Zh20a,Ge-Gr-Oh-Zh22,Qu-Li-Tee22,An-Cz-Do-Sz25,Qu-Li-Tee25,Ge-Oh25} for a comprehensive study of combinatorial and arithmetic invariants of finite groups, as well as their interplay).
Formally, a sequence over $G$ means a finite collection of terms from $G$, where repetition is allowed and the order is disregarded.
In an algebraic context, a sequence can be viewed as an element of the free abelian monoid $\mathcal F (G)$ with basis $G$, endowed with the concatenation of sequences as the operation.
A product-one sequence is a sequence whose terms can be ordered such that their product in $G$ equals the identity element of $G$, and the monoid $\mathcal B (G)$ consists of those sequences.

Historically, the predominant focus of the literature has been on the abelian setting, where additive notation is typically used and product-one sequences are referred to as zero-sum sequences.
The combinatorial and arithmetical properties of zero-sum sequences have been extensively explored over the last several decades, revealing deep connections across algebra, combinatorics, cryptography, and number theory (see, for instance, \cite{Pl-Sc11,Ge-Zh20,Le-Oh23}).
The classical combinatorial invariants, such as the Davenport constant and the Erd\H{o}s-Ginzburg-Ziv constant, have been intensively studied since their initial work in the 1960s (see \cite{Sa-Za23} for strong recent progress on EGZ constant, and \cite{Gr13a} for a monograph on various combinatorial invariants).
Algebraically, the abelian case provides a remarkably elegant structure: if $G$ is abelian, then the monoid $\mathcal B (G)$ is Krull with class group isomorphic to $G$, and every class contains precisely one prime divisor.
Furthermore, for any abstract Krull monoid $H$ with class group $G$ such that every class contains prime divisors, there exists a transfer homomorphism from $H$ to $\mathcal B (G)$, which ensures that arithmetic structures of both $H$ and $\mathcal B (G)$ coincide.
The study of arithmetic of the monoid $\mathcal B (G)$ uses tools from additive combinatorics, which is a special branch of arithmetic combinatorics focussing only on addition and substraction.
The main goal is to describe arithmetical invariants precisely in terms of (classical) combinatorial invariants of $G$, such as the Davenport constant (see \cite{Ge-HK06,Ge09a,Sc16} for monographs and a survey on the arithmetic properties of Krull monoids).

However, translating from the abelian to non-abelian setting introduces profound structural changes and significant new challenges.
Recently, the investigation of combinatorial invariants of product-one sequences over non-abelian groups has been the subject of extensive study (see \cite{Oh-Zh20b,Zh21,Av-Br-Ri23,Ri25,Oh-Ri-Zha-Zh26}).
While $\mathcal B (G)$ itself remains a commutative monoid, as a submonoid of $\mathcal F (G)$, the arithmetic of its elements is deeply governed by the non-abelian nature of the base group $G$.
The most striking difference is the loss of the Krull property: if $G$ is non-abelian, then the monoid $\mathcal B (G)$ is no longer Krull.
Instead, it belongs to the broader and vastly more complex class of C-monoids.
C-monoids are submonoids of factorial monoids with finite class semigroup, and they include not only Krull domains with finite class group but also a large class of non-integrally closed Noetherian domains, as well as combinatorial monoids such as weighted zero-sum sequences and product-$K$ sequences (see, for instance, \cite{Fo-Ha06a,Ga-Ge09,Re13,Ge-Ra-Re15,Oh22,Bo-Me-Or-Sc22,Oh-Ya26} for the algebraic and arithmetic properties of C-monoids).
The monoid $\mathcal B (G)$ is a finitely generated C-monoid, which is a combinatorial analogue of more complicated C-monoids, and it opens the door to studying the algebraic structure of general C-monoids.
Indeed, it is seminormal if and only if the commutator subgroup of $G$ has order at most two if and only if its class semigroup is Clifford, i.e., a union of groups (see \cite{Oh19,Fa-Zh23}, and \cite{Ge-Zh19} for general C-monoids).
The concept of seminormality plays a fundamental role in various branches of algebra.
Originating in algebraic geometry to characterize varieties with mild singularities, such as normal crossings, it also guarantees the invariance of the Picard group under polynomial extensions in commutative algebra, which ensures foundational stability in algebraic K-theory as well (see \cite{Vi11} for a survey of seminormality for commutative rings and algebraic varieties).

As C-monoids satisfy the Mori property (the ascending chain condition on divisorial ideals), the monoid $\mathcal B (G)$ is atomic, i.e., every non-trivial product-one sequence over $G$ can be factored into atoms.
One of the most well-studied arithmetic invariants quantifying the non-uniqueness of these factorizations is the catenary degree.
Intuitively, it measures how difficult it is to connect different factorizations of the same element by a chain of nearby factorizations, and theoretically, it provides one measure of how far a monoid is from being factorial.
Formally, the catenary degree $\mathsf {c} (G)$ of the monoid $\mathcal B (G)$ is defined as the smallest integer $N$ with the following property: for each $S \in \mathcal B (G)$ and each two factorizations $z$ and $z'$ of $S$, there exist factorizations $z = z_0, z_1, \ldots, z_k = z'$ of $S$ such that, for each $i \in [1,k]$, $z_i$ arises from $z_{i-1}$ by replacing at most $N$ new atoms.
In the abelian setting, the catenary degree is well understood: all finite abelian groups with catenary degree at most 4 have been characterized, as well as all finite abelian groups whose catenary degree equals the (small) Davenport constant (see \cite{Ge-Gr-Sc11,Ge-Zh15}).
Despite decades of study, little more is known about the catenary degree, even for simple abelian cases such as $C_p \times C_p$ and $C^{r}_3$.
Moreover, it is known that both the sets of distances and catenary degrees for finite abelian groups are finite intervals (see \cite{Ge-Yu12,Ge-Zh19a}).
It is worthwhile to mention that these arithmetic invariants need not be intervals in general (see \cite{Ge-Sc17,Fa-Ge19} for realization results on these invariants).
In a more general setting, the monoid of product-$K$ sequences, where $K$ is a normal subgroup, was studied in \cite{Oh-Ya26}, and it is worth noting that its catenary degree can be determined by that of the monoid of product-one sequences over a suitable group.

In the present paper, we investigate the catenary degree of finite (not necessarily abelian) groups.
Extending the existing machinery to the non-abelian setting, we provide an explicitly characterization of all finite groups with catenary degree at most 3 (see Theorem~\ref{thm:class}).
Furthermore, we investigate an infinite class of finite groups whose monoids of product-one sequences are seminormal, and we show that their sets of distances and catenary degrees are finite intervals (see Theorem~\ref{thm:Int}).
Finally, using the methods from arithmetic combinatorics, we show that a specific non-abelian group belonging to this infinite class has catenary degree 4 (see Theorem~\ref{thm:Ca}).


\medskip
\section{Preliminaries} \label{2}
\medskip

Let $\mathbb N$ be the set of positive integers and $\mathbb N_0 = \mathbb N \cup \{ 0 \}$ be the non-negative integers.
For integers $a, b \in \mathbb Z$, we denote by $[a,b] = \{ x \in \mathbb Z \mid a \le x \le b \}$ the discrete interval.
For a subset $A \subseteq \mathbb Z$, the set $\Delta (A)$ of distances of $A$ is the set of all differences between consecutive elements of $A$, i.e., it consists of all $d \in \mathbb N$ for which there exists $\ell \in A$ such that $A \cap [\ell, \ell + d] = \{ \ell, \ell + d \}$.
In particular, $\Delta (A) = \emptyset$ if and only if $|A| \le 1$.

Let $G$ be a (multiplicatively written) finite group with identity element $1_G$.
For an element $g \in G$ and a subset $G_0 \subseteq G$, let $\ord (g) \in \mathbb N$ denote the order of $g$, and $\langle G_0 \rangle \leq G$ denote the subgroup generated by $G_0$.
We use the following standard notation of group theory:
\begin{itemize}
\item $[x, y] = xyx^{-1}y^{-1} \in G$ is the {\it commutator} of elements $x, y \in G$,

\smallskip
\item $G' = [G,G] = \langle [x,y] \mid x, y \in G \rangle$ is the {\it commutator subgroup} of $G$,

\smallskip
\item $Z (G) = \{ x \in G \mid gx = xg \text{ for all } g \in G \}$ is the {\it center} of $G$, and

\smallskip
\item $\Syl_p (G)$ is a {\it Sylow $p$-subgroup} of $G$ for a prime number $p$.
\end{itemize}
Furthermore, for a positive integer $n$, we denote by $C_n$ a {\it cyclic group} of order $n$, by $D_{2n}$ a {\it dihedral group} of order $2n$, and by $Q_{4n}$ a {\it dicyclic group} of order $4n$.
In particular, the quaternion group of order 8 is exactly the dicyclic group with $n = 2$.

\subsection*{Monoids}
Throughout this paper, a {\it monoid} means a commutative semigroup with identity element which satisfies the cancellation law.
Let $H$ be a multiplicative written monoid.
Then, $H^{\times}$ denotes the group of units in $H$, $\mathsf q (H)$ the quotient group of $H$, $H_{\red} = H/H^{\times}$ the associated reduced monoid, and $\mathcal A (H)$ the set of atoms (or irreducible elements) of $H$.
For any set $P$, we denote by $\mathcal F (P)$ the {\it free abelian monoid} with basis $P$ whose elements are of the form
\[
  a = \prod_{p \in P} p^{\mathsf v_p (a)} \in \mathcal F (P) \,,
\]
where $\mathsf v_p \colon H \to \mathbb N_0$ is the $p$-adic valuation of $a$.
In this case, $|a| = \sum_{p \in P} \mathsf v_p (a)$ is the {\it length} of $a$ and $\supp (a) = \{ p \in P \mid \mathsf v_p (a) \ge 1 \}$ is the {\it support} of $a$.

We denote by
\begin{itemize}
\item $H' = \{ x \in \mathsf q (H) \mid \text{there exists } N \in \mathbb N \text{ such that } x^{n} \in H \text{ for all } n \ge N \}$ the {\it seminormal closure} of $H$, and by

\smallskip
\item $\widehat{H} = \{ x \in \mathsf q (H) \mid \text{there exists } c \in H \text{ such that } cx^{n} \in H \text{ for all } n \in \mathbb N \}$ the {\it complete integral closure} of $H$.
\end{itemize}
Then, $H \subseteq H' \subseteq \widehat{H} \subseteq \mathsf q (H)$, and $H$ is called {\it seminormal} if $H = H'$ (equivalently, if $x \in \mathsf q (H)$ with $x^{2}, x^{3} \in H$, then $x \in H$), and called {\it completely integrally closed} if $H = \widehat{H}$.
The monoid $H$ is {\it Krull} if it is completely integrally closed and $v$-Noetherian (or Mori), i.e., it satisfies the ascending chain condition of divisorial ideals.
A {\it divisor theory} of $H$ is a homomorphism $\varphi \colon H \to F$ to a free abelian monoid $F := \mathcal F (P)$ satisfying the following two conditions:

\smallskip
\begin{itemize}
\item $\varphi$ is a {\it divisor homomorphism}, i.e., for all $a, b \in H$, $a \mid b$ in $H$ if and only if $\varphi (a) \mid \varphi (b)$ in $F$.

\smallskip
\item For each $p \in P$, there exist $a_1, \ldots, a_n \in H$ such that $p = \gcd \big( \varphi (a_1), \ldots, \varphi (a_n) \big)$.
\end{itemize}
Then, $H$ has a divisor theory if and only if $H$ is a Krull monoid (see \cite[Theorem 2.4.8]{Ge-HK06}).
An integral domain is a Krull domain if and only if its multiplicative monoid of non-zero elements is a Krull monoid.
A Noetherian domain is a Kull domain if and only if it is integrally closed.
Krull monoids is a well-understood class, which contains, for instance, the ring of algebraic integers in cumber fields and the holomorphy rings in algebraic function fields, with algebraic and arithmetic points of view (see \cite{Ge-Zh20} for a survey on the arithmetic of various monoids, such as Krull monoids and power monoids).

\subsection*{Arithmetic of monoids}
Let $H$ be a monoid.
The free abelian monoid $\mathsf Z (H) := \mathcal F \big( \mathcal A (H_{\red}) \big)$ with basis $\mathcal A (H_{\red})$ is called the {\it factorization monoid} of $H$, and the unique homomorphism $\pi \colon \mathsf Z (H) \to H_{\red}$, satisfying $\pi (u) = u$ for $u \in \mathcal A (H_{\red})$, is called the {\it factorization homomorphism} of $H$.
For $a \in H$, we denote by
\begin{itemize}
\item $\mathsf Z (a) = \pi^{-1} (aH^{\times}) \subseteq \mathsf Z (H)$ the {\it set of factorizations} of $a$, and by

\smallskip
\item $\mathsf L (a) = \{ |z| \mid z \in \mathsf Z (a) \}$ the {\it set of lengths} of $a$.
\end{itemize}
For each $a \in H$, $a \in H^{\times}$ if and only if $\mathsf L (a) = \{ 0 \}$.
In particular, $a \in \mathcal A (H)$ if and only if $\mathsf L (a) = \{ 1 \}$.
The monoid $H$ is said to be {\it atomic} if $\mathsf Z (a) \neq \emptyset$ for all $a \in H$, {\it factorial} if $|\mathsf Z (a)| = 1$ for all $a \in H$, and {\it half-factorial} if $|\mathsf L (a)| = 1$ for all $a \in H$.
Then, every factorial monoid is half-factorial.

Let $H$ be an atomic monoid.
We denote by $\mathcal L (H) = \{ \mathsf L (a) \mid a \in H \}$ the {\it system of sets of lengths} of $H$, and by
\[
  \Delta (H) = \bigcup_{\mathsf L \in \mathcal L (H)} \Delta (\mathsf L) \subseteq \mathbb N
\]
the {\it set of distances} of $H$, which is a crucial tool for investigating the structure of sets of lengths of $H$.
By the definition, $H$ is half-factorial if and only if $\Delta (H) = \emptyset$.
We define
\[
  \daleth^{*} (H) = \{ \min \big( \mathsf L \setminus \{ 2 \} \big) \mid \mathsf L \in \mathcal L (H) \text{ with } 2 \in \mathsf L \} \,,
\]
and $\daleth (H) = \sup \daleth^{*} (H)$ with the convention that $\min \emptyset = \sup \emptyset = 0$.
It follows by the definition that $\daleth (H) = 0$ if and only if $\mathsf L (u \cdot v) = \{ 2 \}$ for all $u, v \in \mathcal A (H)$, and that $\daleth (H) \le 2 + \sup \Delta (H)$.
Two factorizations $z, z' \in \mathsf Z (H)$ can be written in the form
\[
  z = u_1 \cdot \ldots \cdot u_{\ell} \cdot v_1 \cdot \ldots \cdot v_m \quad \mbox{ and } \quad z' = u_1 \cdot \ldots \cdot u_{\ell} \cdot w_1 \cdot \ldots \cdot w_n
\]
with
\[
  \{ v_1, \ldots, v_m \} \cap \{ w_1, \ldots, w_n \} = \emptyset \,,
\]
where $\ell, m, n \in \mathbb N_0$ and $u_1, \ldots, u_{\ell}, v_1, \ldots, v_m, w_1, \ldots, w_n \in \mathcal A (H_{\red})$.
Then, $\gcd (z, z') = u_1 \cdot \ldots \cdot u_{\ell}$, and we call $\mathsf d (z, z') = \max \{ m, n \} \in \mathbb N_0$ the {\it distance} between $z$ and $z'$.

Let $a \in H$ and $N \in \mathbb N_0 \cup \{ \infty \}$.
For any factorizations $z, z' \in \mathsf Z (a)$ of $a$, a finite sequence $z_0, \ldots, z_k$ in $\mathsf Z (a)$ is said to be an {\it N-chain of factorizations} from $z$ and $z'$ if $z = z_0$, $z' = z_k$ and $\mathsf d (z_{i-1}, z_i) \le N$ for every $i \in [1,k]$.
In this case, we say that $z$ and $z'$ can be {\it concatenated} by an $N$-chain.
We define $\mathsf c (a)$ to be the smallest integer $N \in \mathbb N_0 \cup \{ \infty \}$ such that any two factorizations $z, z' \in \mathsf Z (a)$ can be concatenated by an $N$-chain.
By definition, $\mathsf c (a) = 0$ if and only if $|\mathsf Z (a)| = 1$, and if $|\mathsf Z (a)| \ge 2$, then $2 \le \mathsf c (a) \le \sup \mathsf L (a)$ and $2 + \sup \Delta \big( \mathsf L (a) \big) \le \mathsf c (a)$.
Then, the set
\[
  \mathsf {Ca} (H) = \{ \mathsf c (a) \mid a \in H \mbox{ with } \mathsf c (a) \ge 2 \} \subseteq \mathbb N_0
\]
is the {\it set of catenary degrees} of $H$, and $\mathsf c (H) = \sup \mathsf {Ca} (H) \in \mathbb N_0 \cup \{ \infty \}$ is the {\it catenary degree} of $H$.
Then, $\mathsf c (H) = 0$ if and only if $H$ is factorial, and if $H$ is not factorial, then $2 + \sup \Delta (H) \le \mathsf c (H)$.
Therefore, we obtain that
\begin{equation} \label{eq:ine}~
  \daleth (H) \le 2 + \sup \Delta (H) \le \mathsf c (H)
\end{equation}
(see \cite[Lemma 3.1]{Ge-Gr-Sc11} for more details).

\subsection*{Product-one sequences}
By a {\it sequence} over $G$, we mean a finite collection of terms from $G$, where repetition is allowed and the order of terms are disregarded.
In algebraic context, we consider sequences as elements of the free abelian monoid $\mathcal F (G)$ with basis $G$, endowed with the concatenation of sequences as the operation denoted by $\bdot$.
In order to avoid confusion between the operations in $G$ and $\mathcal F (G)$, we denote the operation in $G$ without an explicit symbol and use brackets for all exponentiation in $\mathcal F (G)$.
Thus, a sequence $S \in \mathcal F (G)$ is of the form
\begin{equation} \label{eq:sequence}~
  S = g_1 \bdot \ldots \bdot g_{\ell} = {\small \prod}^{\bullet}_{g \in G} g^{[\mathsf v_g (S)]} \,,
\end{equation}
where $g_1, \ldots, g_{\ell} \in G$, $\mathsf v_g (S) = |\{ i \in [1,\ell] \mid g_i = g \}|$ is the {\it multiplicity} of $g$ in $S$, and $|S| = \ell = \sum_{g \in G} \mathsf v_g (S)$ is the {\it length} of $S$.
For a sequence $S$, we denote by $\supp (S) = \{ g \in G \mid \mathsf v_g (S) \ge 1 \}$ the {\it support} of $S$.
For sequences $S, T \in \mathcal F (G)$, $T$ is called a {\it divisor} of $S$ in $\mathcal F (G)$ if $S = T \bdot W$ for some $W \in \mathcal F (G)$, and denoted by $T \mid S$.
In this case, $T$ is said to be a {\it subsequence} of $S$, and
\[
  S \bdot T^{[-1]} := {\small \prod}^{\bullet}_{g \in G} g^{[\mathsf v_g (S) - \mathsf v_g (T)]}
\]
denote the subsequence of $S$ obtained by removing the terms of $T$ from $S$.
More generally, if $T^{[n]} \mid S$ for some $n \in \mathbb N$, then $S \bdot T^{[-n]} := S \bdot \big( T^{[n]} \big)^{[-1]} \in \mathcal F (G)$.
For a subset $H \subseteq G$, we denote by $S_H$ the subsequence of $S$ consisting of all terms from $H$.
On the other hand, we denote by $S^{-1} : = g^{-1}_1 \bdot \ldots \bdot g^{-1}_{\ell}$.

Let $S \in \mathcal F (G)$ be a sequence as in (\ref{eq:sequence}).
Then, the {\it set of products} of $S$ is denoted by
\[
  \pi (S) = \{ g_{\sigma (1)} \cdots g_{\sigma (\ell)} \in G \mid \sigma \mbox{ is a permutation on } [1,\ell] \} \,,
\]
and since $G/G'$ is an abelian group, it is easy to see that $\pi (S)$ is contained in a $G'$-coset.
For the convention, $|S| = 0$ if and only if $S$ is a trivial sequence, and we use $\pi (S) = \{ 1_G \}$.
A generic element of $\pi (S)$ will be denoted by $\sigma (S)$.
In particular, $\pi (S) = \{ \sigma (S) \}$ if $G$ is abelian.
The sequence $S$ is called
\begin{itemize}
\item {\it product-one} if $1_G \in \pi (S)$,

\smallskip
\item {\it product-one free} if $S$ has no non-trivial product-one subsequences,
\end{itemize}
and any ordered product that equals $1_G$ in $\pi (S)$ is called a {\it product-one equation} of $S$.
Then, the set
\[
  \mathcal B (G) = \{ S \in \mathcal F (G) \mid 1_G \in \pi (S) \}
\]
of all product-one sequences over $G$ forms a submonoid of $\mathcal F (G)$, and is called the {\it monoid of product-one sequences} over $G$.
A {\it minimal product-one sequence} is an atom in the monoid $\mathcal B (G)$, i.e., it is a product-one sequence but cannot be factored into two non-trivial product-one subsequences, and so $\mathcal A (G) := \mathcal A (\mathcal B (G))$ is the set of all minimal product-one sequences.

We denote by
\begin{itemize}
\item $\mathsf D (G) = \sup \{ |S| \mid S \in \mathcal A (G) \}$ the {\it large Davenport constant} of $G$,

\smallskip
\item $\mathsf d (G) = \sup \{ |S| \mid S \text{ is a product-one free sequence} \}$ the {\it small Davenport constant} of $G$.
\end{itemize}
Then, we obtain that $\mathsf d (G) + 1 \le \mathsf D (G) \le |G|$ with equality in the first bound if $G$ is abelian, and equality in the second bound if and only if $G$ is isomorphic to $C_m$ or $D_{2n}$ with $n$ odd (see \cite[Lemma 2.4]{Ge-Gr13} and \cite[Corollary 3.12]{Cz-Do-Ge16}).
It is worth noting that we are not aware of any example of a finite non-abelian group $G$ with $\mathsf d (G) + 1 = \mathsf D (G)$.
We refer the reader to \cite{Ge-Gr13,Gr13b,Qu-Li-Tee22,Qu-Li-Tee25,Oh26} for recent works on the Davenport constants of finite groups.

The monoid $\mathcal B (G)$ is a (combinatorially defined) finitely generated C-monoid (see \cite[Theorem 3.2]{Cz-Do-Ge16}), which is a non-completely integrally closed analogue of Krull monoids.
C-monoids are a large class of Mori monoids and they contain not only Krull monoids with finite class group but also non-integrally closed Noetherian domains (see \cite{Re13,Ge-Zh19,Oh22} for algebraic properties of general C-monoids and their class semigroups).
Another examples of combinatorial C-monoids are monoids of weighted zero-sum sequences and product-$K$ sequences, with a normal subgroup $K$ (see \cite{Bo-Me-Or-Sc22,Oh-Ya26} for their detailed studies for the algebraic and arithmetic properties).
We refer the reader to \cite{Cz-Do-Ge16,Oh19,Oh20,Ge-Gr-Oh-Zh22,Fa-Zh23,Ge-Oh25} for studies on the algebraic and arithmetic properties of the monoid of product-one sequences for (in)finite groups.
We summarize here the algebraic structure for the monoid $\mathcal B (G)$ (although the relevant results are studied across several aforementioned papers, they can all be found in the recent paper \cite{Oh-Ya26} by taking $K$ to be trivial).

\smallskip
\begin{lemma} $($\cite{Oh-Ya26}$)$ \label{lem:structure}~
Let $G$ be a finite group.
\begin{enumerate}
\item $\mathcal B (G)$ is a (half-)factorial monoid if and only if $|G| \le 2$.

\smallskip
\item The following statements are equivalent:
         \begin{enumerate}
         \smallskip
         \item $G$ is abelian.

         \smallskip
         \item $\mathcal B (G)$ is a Krull monoid.

         \smallskip
         \item $\mathcal B (G)$ is a weakly Krull monoid.

         \smallskip
         \item $\mathcal B (G)$ is a transfer Krull monoid.

         \smallskip
         \item The inclusion $\mathcal B (G) \hookrightarrow \mathcal F (G)$ is a divisor homomorphism.
         \end{enumerate}

\smallskip
\item $\mathcal B (G)$ is a seminormal monoid if and only if $| G' | \le 2$.
\end{enumerate}
\end{lemma}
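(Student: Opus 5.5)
We prove the three parts of Lemma~\ref{lem:structure} separately, each time working directly with product-one sequences inside $\mathcal F (G)$ rather than with general C-monoid theory.

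\emph{Part 1.} If $|G| \le 2$, then $G$ is cyclic. For $G = C_1$ the only atom is $1_G$, and for $G = C_2 = \{1_G, g\}$ the atoms are $1_G$ and $g^{[2]}$. Every product-one sequence over $C_2$ contains $g$ an even number of times, so its factorization is unique and $\mathcal B (G)$ is factorial. Conversely, suppose $|G| \ge 3$. We choose an element $g$ with $\ord (g) = n \ge 3$, or else two distinct involutions $g, h$. In the first case,
\[
  g^{[n]} \bdot (g^{-1})^{[n]} = \big( g \bdot g^{-1} \big)^{[n]}
\]
has factorizations of lengths $2$ and $n$, so $\mathcal B (G)$ is not half-factorial. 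In the second case $G$ is an elementary abelian $2$-group of rank at least $2$. With $k = gh$, the sequence $(g \bdot h \bdot k)^{[2]}$ equals $g^{[2]} \bdot h^{[2]} \bdot k^{[2]}$, giving factorizations of lengths $2$ and $3$.

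\emph{Part 2.} We argue (a)$\Rightarrow$(e)$\Rightarrow$(b)$\Rightarrow$(c),(d), and then (c)$\Rightarrow$(a) and (d)$\Rightarrow$(a) by contraposition.
\begin{itemize}
\item (a)$\Rightarrow$(e): if $G$ is abelian and $T \mid S$ in $\mathcal F (G)$ with $S, T$ product-one, then $\sigma (S \bdot T^{[-1]}) = \sigma (S) \sigma (T)^{-1} = 1_G$, so $T \mid S$ in $\mathcal B (G)$.
\item (e)$\Rightarrow$(b): a monoid admitting a divisor homomorphism into a free abelian monoid is Krull.
\item (b)$\Rightarrow$(c) and (b)$\Rightarrow$(d): these follow from the general facts that Krull monoids are weakly Krull and transfer Krull.
\end{itemize}

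For the reverse directions, suppose $G$ is non-abelian and pick $x, y$ with $xy \ne yx$. We want to show $\mathcal B (G)$ is not completely integrally closed, i.e.\ we look for an element of $\widehat{\mathcal B (G)} \setminus \mathcal B (G)$. The natural candidate is built from $x, y$ and the commutator: take $x \bdot y \bdot (yx)^{-1}$-type quotients that lie in $\mathsf q (\mathcal B (G)) = $ the product-one-coset lattice, but not in $\mathcal B (G)$, and become product-one after multiplying by a fixed $c$. This shows non-Krull. To exclude weakly Krull and transfer Krull, we plan to use known structural consequences of each property. For weakly Krull, the finitely generated C-monoid $\mathcal B (G)$ would then have to be a finite product of finitely primary monoids localized at height-one primes; we instead show that some such localization fails to be primary. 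For transfer Krull, we produce a set of lengths not realizable over any Krull monoid, or we use that a seminormal/v-noetherian transfer Krull C-monoid must be Krull.

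We expect this step, the non-abelian directions (c)$\Rightarrow$(a) and (d)$\Rightarrow$(a), to be the main obstacle.

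\emph{Part 3.} The plan is to use the criterion that $\mathcal B (G)$ is seminormal if and only if every $x \in \mathsf q (\mathcal B (G))$ with $x^{2}, x^{3} \in \mathcal B (G)$ already lies in $\mathcal B (G)$. We reduce $x$ to a sequence $S \in \mathcal F (G)$ whose products lie in $G'$.
\begin{itemize}
\item If $|G'| \le 2$, then $S^{[2]}$ and $S^{[3]}$ being product-one forces $1_G \in \pi (S)$, by analyzing how products of orderings vary within the $G'$-coset.
\item If $|G'| \ge 3$, we construct a sequence $S$ with $1_G \notin \pi (S)$ but with $S^{[2]}, S^{[3]}$ product-one, using two distinct nontrivial commutators.
\end{itemize}
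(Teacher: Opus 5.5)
The parts you actually carry out are correct. That covers Part 1, the chain (a)$\Rightarrow$(e)$\Rightarrow$(b)$\Rightarrow$(c),(d), and the direction ``$|G'|\le 2\Rightarrow$ seminormal'' of Part 3. For the last one: $\mathcal F(G)$ is root-closed, so $x=S\in\mathcal F(G)$ with $\pi(S)\subseteq G'$. If $\pi(S)=\{c\}$ with $c\ne 1_G$, then swapping two adjacent non-commuting terms would multiply the product by the central element $c$ and give $1_G$. So all terms commute and $\pi(S^{[3]})=\{c\}$.

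The genuine gap is (c)$\Rightarrow$(a) and (d)$\Rightarrow$(a). You give no argument for these, and the routes you sketch would fail.

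\begin{itemize}
\item \emph{Complete integral closure.} Non-complete-integral-closedness only refutes (b), which your chain already covers. Weakly Krull and transfer Krull are strictly weaker. Your witness is also off: $x\bdot y\bdot (yx)^{-1}$ is itself product-one, and $\widehat{\mathcal B(G)}\subseteq\mathcal F(G)$, so quotients with negative exponents cannot occur. A correct witness is the one-term sequence $c=[x,y]$ with $T=(x\bdot y\bdot x^{-1}\bdot y^{-1})^{[\ord(c)]}$, for which $T\bdot c^{[n]}\in\mathcal B(G)$ for all $n$.
\item \emph{Weakly Krull.} Weakly Krull is the intersection condition $H=\bigcap_{\mathfrak p\in\mathfrak X(H)}H_{\mathfrak p}$ (with finite character), not a product decomposition. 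Searching for a non-primary height-one localization is hopeless: for $\mathfrak p\in\mathfrak X(H)$ the only primes of $H_{\mathfrak p}$ are $\emptyset$ and $\mathfrak pH_{\mathfrak p}$, so $H_{\mathfrak p}$ is always primary. You would have to identify the height-one primes of $\mathcal B(G)$ and exhibit an element of $\bigcap_{\mathfrak p}\mathcal B(G)_{\mathfrak p}$ outside $\mathcal B(G)$.
\item \emph{Transfer Krull, via lengths.} A single set of lengths can never be a witness. By Kainrath's theorem, every finite subset of $\mathbb N_{\ge 2}$ is a set of lengths in a Krull monoid with infinite class group in which every class contains a prime. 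You need a global property of the whole system $\mathcal L(G)$ that fails for the relevant Krull monoids.
\item \emph{Transfer Krull, via seminormality.} The principle ``a seminormal transfer Krull C-monoid is Krull'' is false. Every half-factorial monoid is transfer Krull via its length map, and the multiplicative monoid of $\mathbb Z[\sqrt{-3}]$ is a seminormal, half-factorial, non-Krull C-monoid. Besides, $\mathcal B(G)$ is not seminormal once $|G'|\ge 3$, so this route could cover at most $|G'|=2$.
\end{itemize}

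The implication ``seminormal $\Rightarrow |G'|\le 2$'' in Part 3 is likewise only announced, and it is not routine. For instance, $S=a\bdot t\bdot t^{-1}$ with $1_G\ne a\in G'$ works in $D_6$ ($a$ a rotation, $t$ a reflection). It fails whenever $a$ is central, since then $\pi(S^{[k]})=\{a^{k}\}$. So groups with central $G'$ of order at least $3$ need a different sequence: the Heisenberg group of order $27$, or $Q_8\times Q_8$, where every commutator is an involution.

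A complete proof must either split into cases (some commutator of order at least $3$ versus all nontrivial commutators being involutions, central versus non-central $G'$), or use a structural description of $\pi(S^{[n]})$ for large $n$. The paper does not prove Lemma~\ref{lem:structure}; it quotes it from \cite{Oh-Ya26} with $K$ trivial. The hard directions are exactly what that literature supplies, so your proposal needs either those citations or the missing arguments.
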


\noindent
Thus, if $G$ is non-abelian, then the monoid $\mathcal B (G)$ is a non-Krull C-monoid, so that the inclusion $\mathcal B (G) \hookrightarrow \mathcal F (G)$ is not a divisor homomorphism.
For instance, for the quaternion group $Q_8$, both $S = J^{[4]} \bdot I^{[2]}$ and $T = J^{[4]}$ are product-one sequences, even they are minimal (see Theorem~\ref{thm:Davenport5}), with $T \mid S$ in $\mathcal F (Q_8)$, but $T \nmid S$ in $\mathcal B (Q_8)$.
This is a striking difference for study of arithmetic properties of the monoid of product-one sequences between abelian and non-abelian settings.

As usual, we follows the convention of writing $* (G)$ instead of $* \big( \mathcal B (G) \big)$ for all arithmetical concepts $* (H)$ defined for a monoid $H$.
For instance,
\[
  \Delta (G) := \Delta \big( \mathcal B (G) \big)\,, \,\, \daleth (G) := \daleth \big( \mathcal B (G) \big) \,, \,\, \mbox{ and } \,\, \mathsf c (G) := \mathsf c \big( \mathcal B (G) \big) \,.
\]
We recall known results on the catenary degree for abelian groups, and we refer the reader to \cite{Ge-Gr-Sc11,Ge-Zh19a} for further details.

\smallskip
\begin{lemma}  $($\cite{Ge-Gr-Sc11}$)$ \label{lem:abel}~
Let $G$ be a finite abelian group with $|G| \ge 3$, say $G \cong C_{n_1} \times \cdots \times C_{n_r}$ with $1 \lneq n_1 \mid \cdots \mid n_r$.
\begin{enumerate}
\item $\big\{ n_r, 1 + \sum_{i=1}^{r}  \lfloor \frac{n_i}{2} \rfloor \big\} \le \daleth (G) \le \mathsf c (G) \le \mathsf D (G)$.

\smallskip
\item $\mathsf c (G) = 3$ if and only if $G$ is isomorphic to one of the groups $C_3$, $C_2 \times C_2$, or $C_3 \times C_3$.

\smallskip
\item $\mathsf c (G) = 4$ if and only if $G$ is isomorphic to $C_4$, $C_2 \times C_4$, $C_2 \times C_2 \times C_2$, or $C_3 \times C_3 \times C_3$.
\end{enumerate}
\end{lemma}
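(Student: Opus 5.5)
The plan has three stages: first the upper bound $\mathsf c (G) \le \mathsf D (G)$ by an exchange argument using the divisor-theoretic structure of the abelian case, then the lower bounds in (1) by explicit constructions, and finally (2) and (3) by elimination with (1) followed by case-by-case computations. Throughout I read the left-hand side of (1) as $\max \big\{ n_r, 1 + \sum_{i=1}^{r} \lfloor \frac{n_i}{2} \rfloor \big\}$. The middle inequality $\daleth (G) \le \mathsf c (G)$ is (\ref{eq:ine}).

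\emph{Upper bound.} By Lemma~\ref{lem:structure}.2, the inclusion $\mathcal B (G) \hookrightarrow \mathcal F (G)$ is a divisor homomorphism, and this is the key input. I argue by induction on $|S|$. Let $z = u \cdot x$ and $z' = v_1 \cdot \ldots \cdot v_n$ be two factorizations of $S$. Since $u \mid v_1 \bdot \ldots \bdot v_n$ in $\mathcal F (G)$, choose a minimal set of indices, say $v_1, \ldots, v_k$, with $u \mid v_1 \bdot \ldots \bdot v_k$ in $\mathcal F (G)$; then $k \le |u| \le \mathsf D (G)$. By the divisor property, $v_1 \bdot \ldots \bdot v_k = u \bdot w$ with $w \in \mathcal B (G)$. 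Let $y$ be any factorization of $w$. Then $z'' = u \cdot y \cdot v_{k+1} \cdot \ldots \cdot v_n$ is a factorization of $S$. It remains to check $\mathsf d (z', z'') \le \mathsf D (G)$, i.e.\ $1 + |y| \le \mathsf D (G)$. I expect this via the standard tame-degree bound $\omega (H) \le \mathsf D (G)$ for Krull monoids: one shows the $v_i$ can be chosen so that $u \bdot w$ re-factors into at most $\max\{k, 1+|y|\} \le \mathsf D (G)$ atoms. Finally, $z$ and $z''$ share $u$, so the induction hypothesis applied to $S \bdot u^{[-1]}$ connects them by a $\mathsf D (G)$-chain.

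\emph{Lower bounds on $\daleth (G)$.} For the first bound, take $g$ of order $n_r$ and the atoms $U = g^{[n_r]}$, $V = (g^{-1})^{[n_r]}$. Any atom dividing $U \bdot V$ other than $U$ and $V$ has the form $g^{[a]} \bdot (g^{-1})^{[b]}$ with $a \equiv b \bmod n_r$. Such an atom must equal $g \bdot g^{-1}$, so $\mathsf L (U \bdot V) = \{2, n_r\}$ and hence $n_r \in \daleth^{*} (G)$. For the second bound, let $(e_i)$ be a basis with $\ord (e_i) = n_i$ and $k_i = \lfloor n_i/2 \rfloor$. Set $U = e_0 \bdot \prod^{\bullet}_i e_i^{[k_i]}$ with $e_0 = -\sum k_i e_i$, and $V = U^{-1}$. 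Then $U \bdot V$ factors into $1 + \sum k_i$ atoms of length $2$. A short check using that the $k_i$ are at most half the orders shows there is no factorization of length strictly between $2$ and $1 + \sum k_i$.

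\emph{Classification.} For (2), $\mathsf c (G) = 3$ forces $n_r \le 3$ and $1 + \sum \lfloor n_i/2 \rfloor \le 3$. This leaves $C_2$ (excluded, since it is factorial), $C_3$, $C_2^2$, $C_2 \times C_3$ (not of the required invariant form), and $C_3^r$. Similarly, for (3) the bound $\le 4$ leaves $C_4$, $C_2 \times C_4$, $C_2^3$, and $C_3^r$, plus a few small mixed cases to be ruled out directly. One then must show:
\begin{itemize}
\item $\mathsf c \le 3$ for $C_3$, $C_2^2$ and $C_3^2$;
\item $\mathsf c \le 4$ for the four groups in (3);
\item $\mathsf c (C_3^3) \ge 4$ and $\mathsf c (C_3^r) \ge 5$ for $r \ge 4$.
\end{itemize}
The last two come from explicit elements with a gap in their sets of lengths (for instance using $\daleth$ built from $r$ independent elements of order $3$). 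The main obstacle will be the upper bounds for $C_3 \times C_3$ and $C_3^3$. There I would use the structure of minimal zero-sum sequences of maximal length, namely $\mathsf D (C_3^2) = 5$ and $\mathsf D (C_3^3) = 7$. I would show that every pair of atoms $A, B$ with $|A|, |B|$ large admits a chain of small replacements, i.e.\ reduce to bounding the catenary degree of products of two atoms, which is the monotone/strong catenary degree reduction, and then do a finite case analysis on supports.
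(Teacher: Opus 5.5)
The paper does not prove this lemma; it quotes it from \cite{Ge-Gr-Sc11}, so your proposal has to stand on its own. Your lower bounds in item 1 are correct. The atoms dividing $U \bdot U^{-1}$ are the pairs $x \bdot (-x)$ together with atoms of length $1+\sum k_i$ containing exactly one of $\pm e_0$, and this does give $\mathsf L = \{2, 1+\sum k_i\}$. Your elimination for items 2 and 3 is also correct. Note that for $C_3^r$ the second bound equals $1+r$, which already forces $r \le 2$ (resp.\ $r \le 3$), so the separate lower bounds you plan for $C_3^r$ are redundant. There are, however, two genuine gaps.

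\emph{First gap: the bound $\mathsf c (G) \le \mathsf D (G)$.} Your single step from $z'$ to $z'' = u \cdot y \cdot v_{k+1} \cdot \ldots \cdot v_n$ needs $1+|y| \le \mathsf D (G)$. The bound $\omega(\mathcal B (G)) \le \mathsf D (G)$ only controls $k$, not $|y|$, and for an arbitrary $y$ the claim is false. In $C_2^3$, take a basis $e_1, e_2, e_3$, put $f = e_1+e_2+e_3$ and $u = e_1 \bdot e_2 \bdot e_3 \bdot f$, and take the atoms
$v_1 = e_1 \bdot e_2 \bdot (e_1+e_3) \bdot (e_2+e_3)$, $v_2 = e_2 \bdot e_3 \bdot (e_1+e_3) \bdot (e_1+e_2)$, $v_3 = e_2 \bdot f \bdot (e_2+e_3) \bdot (e_1+e_2)$.
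No two of them cover $u$, so $k=3$. The quotient $w = e_2^{[2]} \bdot (e_1+e_2)^{[2]} \bdot (e_1+e_3)^{[2]} \bdot (e_2+e_3)^{[2]}$ has a factorization $y$ into four atoms, and then $\mathsf d (z', z'') = 5 > 4 = \mathsf D (C_2^3)$. Asserting that a good $y$ always exists is a tame-degree statement, which you neither prove nor get from $\omega$.

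The repair is to avoid the single jump:
if $k<n$, then $v_1 \bdot \ldots \bdot v_k$ is a proper divisor of $S$, and induction joins $v_1 \cdot \ldots \cdot v_k$ to $u \cdot y$ by a $\mathsf D (G)$-chain;
if $k = n$, run the same argument with $z$ and $z'$ interchanged;
if both covers are full, then $|z|, |z'| \le \mathsf D (G)$, so $\mathsf d (z, z') \le \mathsf D (G)$ directly.
This is the standard proof of $\mathsf c (H) \le \omega (H)$.

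\emph{Second, more serious gap: the upper bounds in items 2 and 3.} The bound $\mathsf c \le \mathsf D$ settles $C_3$, $C_2^2$, $C_4$ and $C_2^3$. The real content is $\mathsf c (C_3^2) \le 3$, $\mathsf c (C_2 \oplus C_4) \le 4$ and $\mathsf c (C_3^3) \le 4$. You did not flag $C_2 \oplus C_4$, although $\mathsf D (C_2 \oplus C_4) = 5$. For all three cases your proposal contains no argument. There is no general principle reducing the catenary degree to products of two atoms: in the numerical monoid generated by $3$ and $4$, every product of two atoms factors uniquely, yet $\mathsf c = 4$. Passing to the monotone catenary degree cannot help either, since it dominates $\mathsf c$.

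The missing ingredient is the structural theorem of \cite{Ge-Gr-Sc11}, $\mathsf c (G) \le \max \{ \lfloor \frac{1}{2} \mathsf D (G) + 1 \rfloor, \daleth (G) \}$. Roughly, $\daleth$ accounts for pairs of atoms that admit a longer refactorization; compare the device in this paper's proof of Theorem~\ref{thm:Ca}, which uses maximality of $|z|+|z'|$. The remaining configurations cost at most $\lfloor \frac{1}{2} \mathsf D (G) + 1 \rfloor$. This number equals $3$, $3$, $4$ for $C_3^2$, $C_2 \oplus C_4$, $C_3^3$ respectively. It then remains to show $\daleth (C_3^2) \le 3$ and $\daleth (C_2 \oplus C_4), \daleth (C_3^3) \le 4$, for instance via $\daleth \le 2 + \max \Delta$ and the known values of $\max \Delta$ for these groups.
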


\smallskip
It is worth noting that $\mathsf c (G) \le \mathsf D (G)$ is a canonical bound in the abelian setting (see \cite{Ge-Zh19a} for characterizations of finite groups whose catenary degree equals the (small) Davenport constant); however, it remains unknown whether this inequality holds in the non-abelian setting.
Despite decades of study, the exact value of the catenary degree remains largely unknown, even for simple abelian cases such as $C_p \times C_p$ for a prime $p$ and $C^{r}_3$ for $r \ge 4$ (see \cite[Appendix B, Problem 10]{Ge-Gr-Zh26}).


\medskip
\section{The catenary degree for non-abelian groups} \label{3}
\medskip

In this section, we focus on the catenary degree for non-abelian groups.
We begin by establishing a lower bound on $\daleth (G)$ and $\mathsf c (G)$ for groups containing non-commuting elements.

\smallskip
\begin{lemma} \label{lem:min}~
Let $G$ be a finite group and $g \in G$ with $\ord (g) = 3$.
If there exists an element $h \in G$ with $\ord (h) \in [2,3]$ such that $gh \neq hg$, then $\daleth (G) \ge 4$.
\end{lemma}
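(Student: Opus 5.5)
The plan is to produce two minimal product-one sequences $U,V\in\mathcal A(G)$ such that $S=U\bdot V$ has a factorization of length $4$ but none of length $3$. Then $2\in\mathsf L(S)$ and $\min(\mathsf L(S)\setminus\{2\})\ge 4$, which gives $\daleth(G)\ge 4$ by the definition of $\daleth^{*}$.

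\emph{Construction.} The natural source of a length-$4$ factorization is a product of four two-term atoms $x\bdot x^{-1}$. So I would take $U=g_1\bdot g_2\bdot g_3\bdot g_4$ with $g_1g_2g_3g_4=1_G$, and $V=g_1^{-1}\bdot g_2^{-1}\bdot g_3^{-1}\bdot g_4^{-1}$. Then $V$ is automatically product-one, since $g_4^{-1}g_3^{-1}g_2^{-1}g_1^{-1}=1_G$. Moreover
\[
U\bdot V=\prod{}^{\bullet}_{i=1}^{4}\big(g_i\bdot g_i^{-1}\big),
\]
which is a factorization of length $4$ provided no $g_i$ equals $1_G$. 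My first choice of terms is $g_1=g_2=g$, $g_3=h$ and $g_4=(g^2h)^{-1}=h^{-1}g$. This uses $\ord(g)=3$, so that $g^{-2}=g$.
\begin{itemize}
\item If $\ord(h)=2$, this gives $U=g^{[2]}\bdot h\bdot hg$.
\item If $\ord(h)=3$, this gives $U=g^{[2]}\bdot h\bdot h^{2}g$.
\end{itemize}
Should these fail, the backup is to replace $g^{[2]}$ by $g\bdot g^{-1}$-type variations or by conjugates of $g$ under $h$. The hypothesis $gh\ne hg$ is exactly what I use to guarantee the following:
\begin{itemize}
\item none of $h$, $hg$ (respectively $h^2g$) coincides with $g$, $g^{-1}$ or $1_G$;
\item $g$ and $h$ generate a non-abelian subgroup, so that orderings matter.
\end{itemize}

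\emph{Atomicity.} $U$ and $V$ have length $4$, so being atoms means having no product-one proper subsequence of length $1$, $2$ or $3$.
\begin{itemize}
\item A product-one subsequence of length $3$ would leave a complementary product-one subsequence of length $1$, i.e.\ the term $1_G$. So it suffices to exclude $1_G$ as a term and pairs of mutually inverse terms.
\item This is a finite check using $\ord(g)=3$, $\ord(h)\in[2,3]$ and $gh\neq hg$. For instance, $hg=g^{-1}$ would force $h=g^{-2}=g$, which is impossible; similarly $hg=h^{-1}$ would force $g=h^{-2}$, which commutes with $h$, a contradiction.
\item The same check applies to $V$, since inverting all terms preserves the condition.
\end{itemize}

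\emph{No factorization of length $3$.} This is the step I expect to be the main obstacle. Since $|S|=8$, a length-$3$ factorization would have atom lengths $\{2,2,4\}$, $\{2,3,3\}$ or $\{1,\cdot,\cdot\}$; the last is impossible because $1_G\notin\supp(S)$.
\begin{itemize}
\item Any atom of length $2$ in $S$ is of the form $x\bdot x^{-1}$ with $x,x^{-1}\in\supp(S)$. I would first list all inverse pairs occurring in $S$. By the atomicity check, these should be exactly the four pairs $g_i\bdot g_i^{-1}$, up to multiplicity for $g$.
\item In case $\{2,2,4\}$: removing two such pairs leaves four specific terms. I must check directly that these four terms do not admit a product-one ordering, or that they do but decompose further. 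Either outcome rules out a length-$3$ factorization with this profile; the check is a finite computation of at most $3!$ orderings in $\langle g,h\rangle$, each reduced via $\ord(g)=3$ and $gh\ne hg$.
\item In case $\{2,3,3\}$: after removing one inverse pair, the remaining six terms must split into two product-one triples. For each candidate triple I would compute its products in both cyclic orientations. The commutator $[g,h]\neq 1_G$ should obstruct every such split, because a product-one triple $x\bdot y\bdot z$ requires $z^{-1}\in\{xy,yx\}$.
\end{itemize}
If the explicit check produces an unwanted length-$3$ factorization for one of the two cases $\ord(h)=2$ or $\ord(h)=3$, I would adjust $U$ by swapping $h$ and $g$ or using $h^{-1}$ in place of $h$, and rerun the same finite verification.
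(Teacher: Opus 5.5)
There is a genuine gap: the pair $U,V$ you propose never works, in any group and for either value of $\ord(h)$. Take $U=g^{[2]}\bdot h\bdot h^{-1}g$ and $V=U^{-1}=(g^{-1})^{[2]}\bdot h^{-1}\bdot g^{-1}h$. The last two terms of $U$ multiply to $h(h^{-1}g)=g$, so the term $g^{-1}$ of $V$ closes a product-one triple:
\[
U\bdot V=\big(h\bdot h^{-1}g\bdot g^{-1}\big)\bdot\big(g^{-1}h\bdot h^{-1}\bdot g\big)\bdot\big(g\bdot g^{-1}\big),
\]
where $h\,(h^{-1}g)\,g^{-1}=1_G=(g^{-1}h)\,h^{-1}\,g$. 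Both triples are atoms, since none of their terms is $1_G$ (note $h^{-1}g\neq 1_G$ because $h\neq g$). Hence $\mathsf L(U\bdot V)\supseteq\{2,3,4\}$, and this $S$ only yields $3\in\daleth^{*}(G)$. The argument uses neither $\ord(h)$ nor $gh\neq hg$, so your expectation that the commutator obstructs every $\{2,3,3\}$ split is false.

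The two concrete fallbacks you name have the same defect. With $h^{-1}$ in place of $h$ you get $g^{[2]}\bdot h^{-1}\bdot hg$, and $h^{-1}(hg)=g$. Swapping $g$ and $h$ gives $h^{[2]}\bdot g\bdot g^{-1}h$ with $g(g^{-1}h)=h$ if $\ord(h)=3$, and the non-atom $(h\bdot h)\bdot(g\bdot g^{-1})$ if $\ord(h)=2$. The root cause is structural: placing two copies of an element $x$ of order $3$ next to each other in the product-one ordering forces the other two terms to multiply to $x^{-2}=x$, which is itself a term of $U$. Note also that the verification is not a finite computation. $G$ is arbitrary, so it must be a symbolic case analysis valid in every group, and deferring it is what hid the failure.

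The paper avoids this by interleaving: it takes $U=g\bdot h\bdot g\bdot(ghg)^{-1}$ (that is, $g\bdot h\bdot g\bdot g^{-1}hg^{-1}$ when $\ord(h)=2$) and $V=U^{-1}$. It then excludes a factorization into atoms of lengths $3,3,2$ by cases on whether $ghg$ or $(ghg)^{-1}$ lies in a length-$3$ atom. Each case reduces, via $\ord(g)=3$ and $\ord(h)\in[2,3]$, to an identity such as $hg=gh^{-1}$, $h=g^{-1}hg$, $h=g^{\pm1}$ or $g=1_G$, contradicting the hypotheses.

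Your handling of the $\{2,2,4\}$ profile is fine, and in fact automatic for any $S=U\bdot U^{-1}$ with $1_G\notin\supp(U)$. In such $S$ we have $\mathsf v_x(S)=\mathsf v_{x^{-1}}(S)$ for all $x$, and every involution occurs with even multiplicity. Deleting two inverse pairs preserves both properties, so the remaining four terms split into two inverse pairs.

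Finally, your justification of atomicity is not valid for non-abelian $G$: an atom may contain a proper product-one subsequence. For example, in $D_6$ the paper's $U$ becomes the atom $g^{[2]}\bdot h^{[2]}$, which contains $h^{[2]}$. What you need, and what you end up checking, is only that no term of $U$ is $1_G$ and that $U$ is not a product of two inverse pairs.
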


\begin{proof}
Suppose that $G$ has an element $h$ with $\ord (h) = 3$ (resp., $\ord (h) = 2$) such that
\begin{equation} \label{eq:noncommut}
  gh \neq hg \,.
\end{equation}
Then, $ghg \neq 1_G$, for otherwise $h = g$, a contradiction to (\ref{eq:noncommut}) (resp., $2 = \ord (h) = \ord (g) = 3$).
Let
\[
  U = g \bdot h \bdot g \bdot (g^{-1}h^{-1}g^{-1}) \quad \big(\mbox{resp., } \,\, U = g \bdot h \bdot g \bdot (g^{-1}hg^{-1}) \big)
\]
and
\[
  V = U^{-1} = g^{-1} \bdot h^{-1} \bdot g^{-1} \bdot (ghg) \quad \big(\mbox{resp., } \,\, V = U^{-1} = g^{-1} \bdot h \bdot g^{-1} \bdot (ghg) \big)
\]
be minimal product-one sequences of length 4.
Then, $\mathsf L (U \bdot V) \supseteq \{ 2, 4 \}$, and it suffices to show that $3 \notin \mathsf L (U \bdot V)$; indeed, $4 = \min \big( \mathsf L ( U \bdot V) \setminus \{ 2 \} \big) \le \daleth (G)$, whence the assertion follows.
Assume to the contrary that $3 \in \mathsf L (U \bdot V)$, so that $U \bdot V = W_1 \bdot W_2 \bdot W_3$ for some $W_1, W_2, W_3 \in \mathcal A (G)$ with $|W_1| \ge |W_2| \ge |W_3|$.
By the construction of $U$ and $V$, we infer that $|W_1| = |W_2| = 3$, $|W_3| = 2$.
Let $W_1 = g_1 \bdot g_2 \bdot g_3$.
If $g_i = g^{-1}_j$ for some $i \neq j$, then we may assume that $W_1 = g_1 \bdot g^{-1}_1 \bdot g_3$.
So, a straightforward computation using the product-one equation shows that $g_3 = 1_G$, a contradiction.
Thus, it follows that $g_i \neq g^{-1}_j$ for all $i \neq j$, so that $W_2 = W^{-1}_1$.

Suppose now that $ghg \mid W_1$.
Then, since $|W_3| = 2$, it follows that either $W_3 = h \bdot h^{-1}$ (resp., $W_3 = h \bdot h$) or $W_3 = g \bdot g^{-1}$.
If $W_3 = h \bdot h^{-1}$ (resp., $W_3 = h \bdot h$), then either $W_1 = g \bdot g \bdot (ghg)$ or $W_1 = g^{-1} \bdot g^{-1} \bdot (ghg)$.
In the former case, the product-one equation shows that either $h = 1_G$ or $g = h^{-1}$, a contradiction to (\ref{eq:noncommut}).
In the latter case, we obtain that $h = 1_G$, which is also a contradiction.
Hence, we assume that $W_3 = g \bdot g^{-1}$.
Then, $W_1$ is one of the following forms;
\[
  g \bdot h \bdot (ghg) \,, \quad g \bdot h^{-1} \bdot (ghg) \,, \quad g^{-1} \bdot h \bdot (ghg) \,, \quad g^{-1} \bdot h^{-1} \bdot (ghg)
\]
(resp., we have that either $W_1 = g \bdot h \bdot (ghg)$ or $W_1 = g^{-1} \bdot h \bdot (ghg)$).

If $W_1 = g \bdot h \bdot (ghg)$, then by the product-one equation, we must have that $hg = gh^{-1}$ (resp., $gh = hg$, a contradiction to (\ref{eq:noncommut})), equivalently $gh = h^{-1}g$.
Since $\ord (g) = 3$, it follows that
\[
  hg^{-1} = (hg)g = (gh^{-1})g = g(h^{-1}g) = g(gh) = g^{-1}h \,,
\]
equivalently $gh = hg$, a contradiction to (\ref{eq:noncommut}).

If $W_1 = g \bdot h^{-1} \bdot (ghg)$, then it follows by the product-one equation that
\[
  h = g(ghg) = g^{-1}hg \quad \mbox{ or } \quad h = (ghg)g = ghg^{-1} \,.
\]
In either case, we obtain that $gh = hg$, a contradiction to (\ref{eq:noncommut}).

If $W_1 = g^{-1} \bdot h \bdot (ghg)$, then it follows by the product-one equation that
\[
  h^{-1} = g^{-1}(ghg) = hg \quad \mbox{ or } \quad h^{-1} = (ghg)g^{-1} = gh \,
\]
(resp., $h = hg$ or $h = gh$, implying that $g = 1_G$, a contradiction).
Since $\ord (h) = 3$, we obtain, in either case, that $h = h^{-2} = g$, a contradiction to (\ref{eq:noncommut}).

If $W_1 = g^{-1} \bdot h^{-1} \bdot (ghg)$, then it follows by the product-one equation that
\[
  h = g^{-1} (ghg) = hg \quad \mbox{ or } \quad h = (ghg)g^{-1} = gh \,.
\]
In either case, we obtain that $g = 1_G$, a contradiction.

Suppose that $(ghg)^{-1} \mid W_1$.
Since $W_2 = W^{-1}_1$, the same argument as used in the former case leads to a contradiction by swapping the roles of $W_1$ and $W_2$.

Suppose that $ghg \nmid W_1$ and $(ghg)^{-1} \nmid W_1$.
Then, we must have that $W_3 = (ghg) \bdot (ghg)^{-1}$.
Since $W_2 = W^{-1}_1$, it suffices to consider the case where either $W_1 = g \bdot h \bdot g$ or $W_1 = g \bdot h^{-1} \bdot g$ (resp., $W_1 = g \bdot h \bdot g$).
In the former case, the product-one equation shows that $h = g$, and in the latter case, we obtain that $h = g^{-1}$.
In either case, this leads to a contradiction to (\ref{eq:noncommut}) (resp., $2 = \ord (h) = \ord (g) = 3$).
\end{proof}

\smallskip
\begin{proposition} \label{pro:non-abel}~
If $G$ is a finite non-abelian group, then $\mathsf c (G) \ge 4$.
\end{proposition}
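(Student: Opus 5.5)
The plan is to use the chain of inequalities \eqref{eq:ine}, $\daleth (G) \le 2 + \sup \Delta (G) \le \mathsf c (G)$, together with Lemma~\ref{lem:min}. The case distinction will be on the orders of elements of $G$: elements of large order give a large distance directly, and the remaining small-exponent groups are covered by Lemma~\ref{lem:min}.

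\emph{Case 1: $G$ has an element $g$ with $n := \ord (g) \ge 4$.} I would take $U = g^{[n]}$ and $U^{-1} = (g^{-1})^{[n]}$, both minimal product-one sequences. Every atom dividing $U \bdot U^{-1}$ has support in $\{g, g^{-1}\}$. Since $\langle g \rangle$ is cyclic, the only atoms in $\mathcal B (\langle g \rangle)$ with that support are $U$, $U^{-1}$ and $g \bdot g^{-1}$. Hence $\mathsf L (U \bdot U^{-1}) = \{2, n\}$. This gives $n - 2 \in \Delta (G)$, and so $\mathsf c (G) \ge 2 + (n-2) = n \ge 4$. This is the standard abelian argument, applied inside the cyclic subgroup $\langle g \rangle$; the only thing to check is that factorizations in $\mathcal B (G)$ of a sequence supported in $\langle g \rangle$ are the same as those in $\mathcal B (\langle g \rangle)$, which holds because every product of its terms lies in the abelian group $\langle g \rangle$.

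\emph{Case 2: every element of $G$ has order in $\{1, 2, 3\}$.} If no element has order $3$, then $G$ has exponent at most $2$ and is abelian, contradicting the hypothesis. So fix some $g$ with $\ord (g) = 3$. Since $G$ is non-abelian, $g$ cannot commute with every element of order $2$ or $3$, because such elements together with $1_G$ exhaust $G$ (so $g$ would be central). To see this directly:
\begin{itemize}
\item if $g$ commuted with every element, consider any $h$ with $hk \neq kh$ for some $k$; I would then re-run the argument with a non-central element of order $3$;
\item more cleanly: if there is an element $h$ of order $2$ commuting with some $g$ of order $3$, then $gh$ has order $6$, which is excluded. So every element of order $3$ fails to commute with every element of order $2$;
\item if $G$ has no element of order $2$, then $G$ has exponent $3$. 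Being non-abelian, it contains two non-commuting elements, both of order $3$.
\end{itemize}
In all subcases there are $g, h$ with $\ord (g) = 3$, $\ord (h) \in [2,3]$ and $gh \neq hg$. Lemma~\ref{lem:min} then gives $\daleth (G) \ge 4$, and \eqref{eq:ine} yields $\mathsf c (G) \ge 4$.

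The only genuinely delicate point is the reduction in Case 2. There one must ensure that a non-commuting pair with the specific orders required by Lemma~\ref{lem:min} exists; the key observation is that commuting elements of coprime orders $2$ and $3$ produce an element of order $6 \ge 4$, which is excluded in this case. Everything else is immediate from earlier results.
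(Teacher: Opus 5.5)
Your proposal is correct and follows essentially the same route as the paper: an element of order $n \ge 4$ gives $\mathsf L \big( g^{[n]} \bdot (g^{-1})^{[n]} \big) = \{2, n\}$. Otherwise, an element of order $2$ cannot commute with one of order $3$ (their product would have order $6$), and a non-abelian group of exponent $3$ contains two non-commuting elements of order $3$, so Lemma~\ref{lem:min} applies. The only blemish is your opening claim in Case 2 that a fixed $g$ of order $3$ commuting with everything ``would be central'' and so contradict non-abelianness: this is not a contradiction (e.g.\ the center of the Heisenberg group of order $27$ contains elements of order $3$). Your second and third bullets already choose the non-commuting pair correctly, so that remark and the first bullet can simply be dropped.
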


\begin{proof}
If $G$ has an element $g$ with $\ord (g) \ge 4$, then $U = g^{[\ord (g)]}$ and $V = (g^{-1})^{[\ord (g)]}$ are minimal product-one sequences.
It follows that $\mathsf L (U \bdot V) = \{ 2, \ord (g) \}$, whence $4 \le \daleth (G) \le \mathsf c (G)$.
Now, we assume that
\begin{equation} \label{eq:order}~
  \ord (g) \in [2,3] \,\, \mbox{ for all } \, g \in G \setminus \{ 1_G \} \,, \mbox{ and }
\end{equation}
\[
  |G| = 2^{n_1} 3^{n_2} \quad \mbox{ for some } \,\, n_1, n_2 \in \mathbb N_{0}, \mbox{ not all zero.}
\]
Since $G$ is non-abelian, we may suppose that $n_2 \ge 1$.
By Cauchy's Theorem, there exists an element $g \in G$ with $\ord (g) = 3$.
Assume that $n_1 = 0$, i.e., $|G| = 3^{n_2}$.
Since $G$ is non-abelian, (\ref{eq:order}) ensures that there exist $h \in G$ with $\ord (h) = 3$ and $gh \neq hg$.
Thus, it follows by Lemma~\ref{lem:min} that $4 \le \daleth (G) \le \mathsf c (G)$.
Hence, we must have that $|G| = 2^{n_1} 3^{n_2}$ for $n_1, n_2 \in \mathbb N$.
Again by Cauchy's Theorem, there exists an element $f \in G$ with $\ord (f) = 2$.
Then, it follows by (\ref{eq:order}) that $gf \neq fg$, for otherwise we obtain that $\ord (gf) = 6$, a contradiction to (\ref{eq:order}).
Hence, Lemma~\ref{lem:min} implies that $4 \le \daleth (G) \le \mathsf c (G)$.
\end{proof}

\smallskip
We are now ready to classify all finite groups with catenary degree at most 3 as follows.

\smallskip
\begin{theorem} \label{thm:class}~
Let $G$ be a finite group.
\begin{enumerate}
\item $\mathsf c (G) \le 2$ if and only if $|G| \le 2$.

\smallskip
\item The following statements are equivalent:
      \begin{enumerate}
      \smallskip
      \item[(a)] $\mathsf c (G) = 3$.

      \smallskip
      \item[(b)] $\Delta (G) = \{ 1 \}$.

      \smallskip
      \item [(c)] $\daleth (G) = 3$.

      \smallskip
      \item[(d)] $G$ is isomorphic to $C_3$, or $C_2 \times C_2$, or $C_3 \times C_3$.
      \end{enumerate}
\end{enumerate}
\end{theorem}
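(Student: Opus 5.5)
We reduce everything to the abelian case using Proposition~\ref{pro:non-abel} and Lemmas~\ref{lem:structure} and~\ref{lem:abel}, together with the chain of inequalities (\ref{eq:ine}).

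For part 1, we use that $\mathsf c (G) = 0$ if and only if $\mathcal B (G)$ is factorial, and that for a non-factorial monoid $\mathsf c (G) \ge 2 + \sup \Delta (G)$. If $|G| \le 2$, then $\mathcal B (G)$ is factorial by Lemma~\ref{lem:structure}.1, so $\mathsf c (G) = 0 \le 2$. Conversely, suppose $\mathsf c (G) \le 2$. If $\mathcal B (G)$ were not factorial, then some element $a$ would have two distinct factorizations, and by definition $\mathsf c (a) \ge 2$. We claim that a catenary degree of exactly $2$ is impossible. Indeed, if $\mathsf d (z, z') \le 2$ with $z \ne z'$, then after cancelling $\gcd (z, z')$ we get $u_1 \bdot u_2 = v_1$ or $u_1 \bdot u_2 = v_1 \bdot v_2$. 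The first contradicts that $v_1$ is an atom. In the second, the equality holds in $\mathcal B (G)_{\red}$, which is a submonoid of $\mathcal F (G)$. We would still need to rule this out, since in a general monoid $\mathsf c = 2$ can occur. So instead we argue directly: if $|G| \ge 3$, then Lemma~\ref{lem:structure}.1 shows $\mathcal B (G)$ is not half-factorial. Hence $\Delta (G) \ne \emptyset$, and (\ref{eq:ine}) gives $\mathsf c (G) \ge 2 + 1 = 3$. This proves 1.

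For part 2, first note that each of (a), (b), (c) forces $|G| \ge 3$. For (a), this is part 1. For (b), if $|G| \le 2$ then $\Delta (G) = \emptyset$ by half-factoriality. For (c), if $|G| \le 2$ then $\daleth (G) = 0$. Next we show that each of (a), (b), (c) forces $G$ abelian. If $G$ is non-abelian, then Proposition~\ref{pro:non-abel} gives $\mathsf c (G) \ge 4$, so (a) fails. To rule out (b) and (c) we need the stronger bound $\daleth (G) \ge 4$. The proof of Proposition~\ref{pro:non-abel} actually establishes exactly this: in every case it produces either $\mathsf L (U \bdot V) = \{2, \ord (g)\}$ with $\ord (g) \ge 4$, or it invokes Lemma~\ref{lem:min}. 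So we record $4 \le \daleth (G) \le 2 + \sup \Delta (G)$. This forces $\daleth (G) \ne 3$ and $\sup \Delta (G) \ge 2$, so both (b) and (c) fail. Hence in all cases $G$ is abelian with $|G| \ge 3$.

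For abelian $G$ with $|G| \ge 3$, we have $\Delta (G) \ne \emptyset$. Then (\ref{eq:ine}) reads $\daleth (G) \le 2 + \max \Delta (G) \le \mathsf c (G)$, and we argue as follows.
\begin{itemize}
\item (a)$\Leftrightarrow$(d) is Lemma~\ref{lem:abel}.2.
\item (a)$\Rightarrow$(b): $\max \Delta (G) \le 1$ and $\Delta (G)$ is nonempty, so $\Delta (G) = \{1\}$.
\item (b)$\Rightarrow$(c): $\daleth (G) \le 3$, and Lemma~\ref{lem:abel}.1 gives $\daleth (G) \ge 1 + \sum_i \lfloor n_i/2 \rfloor$ and $\daleth (G) \ge n_r$. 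If $n_r \ge 3$, this gives $\daleth (G) \ge 3$. Otherwise $G = C_2^r$ with $r \ge 2$, and $1 + r \ge 3$. So $\daleth (G) = 3$.
\item (c)$\Rightarrow$(d): the same lower bounds give $n_r \le 3$ and $1 + \sum_i \lfloor n_i/2 \rfloor \le 3$. This leaves $C_3$, $C_3^2$, $C_3^r$ with $r \ge 3$, $C_2^2$, and $C_2^3$ as candidates. Lemma~\ref{lem:abel}.2--3 gives $\mathsf c (G) \ge 4$ for the excess groups; we expect to show $\daleth (G) \ge 4$ for them by explicit sequences.
\end{itemize}

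The main obstacle is exactly that last implication, (c)$\Rightarrow$(d). We need $\daleth (C_2^3) \ge 4$ and $\daleth (C_3^r) \ge 4$ for $r \ge 3$, since $\mathsf c \ge 4$ alone does not bound $\daleth$ from below. For $C_2^3$, we try $U = e_1 \bdot e_2 \bdot e_3 \bdot (e_1 + e_2 + e_3)$ together with a second atom $V$ such that $U \bdot V$ has lengths $\{2, 4\}$. For $C_3^3$, we try $U = e_1 \bdot e_2 \bdot e_3 \bdot (e_1 + e_2 + e_3)^{[2]}$ and $V = -U$. Alternatively, we would cite the known value $\daleth (G) = \mathsf c (G)$ for these small abelian groups from \cite{Ge-Gr-Sc11}.
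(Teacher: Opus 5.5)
Your strategy is sound, and everything before the last bullet is correct. In particular, extracting $\daleth(G)\ge 4$ for non-abelian $G$ from the proof of Proposition~\ref{pro:non-abel} rules out (a), (b), (c) at once. Your abelian arguments for (a)$\Leftrightarrow$(d), (a)$\Rightarrow$(b) and (b)$\Rightarrow$(c) are also fine.

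The problem is (c)$\Rightarrow$(d), which you leave open. The ``main obstacle'' there comes from a miscount. Since $\lfloor 2/2\rfloor=\lfloor 3/2\rfloor=1$, for $G\cong C_2^r$ or $G\cong C_3^r$ the bound of Lemma~\ref{lem:abel}.1 reads $1+\sum_i\lfloor n_i/2\rfloor=1+r$. You used exactly this for $C_2^r$ in the previous bullet. So $\daleth(G)=3$ forces $r\le 2$, and with $n_r\le 3$ and $|G|\ge 3$ this leaves only $C_3$, $C_3\times C_3$ and $C_2\times C_2$. The groups $C_2^3$ and $C_3^r$ with $r\ge 3$ are already excluded because $1+r\ge 4$. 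This is how the paper finishes, in one line.

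Your proposed explicit witness for $C_3^3$ would not have worked anyway. With $f=e_1+e_2+e_3$ and $U=e_1\bdot e_2\bdot e_3\bdot f^{[2]}$, the product $U\bdot(-U)$ factors as $\big(e_1\bdot e_2\bdot e_3\bdot(-f)\big)\bdot\big(f\bdot(-f)\big)\bdot\big(f\bdot(-e_1)\bdot(-e_2)\bdot(-e_3)\big)$. So $3\in\mathsf L(U\bdot(-U))$. For $C_2^3$ the choice $V=U$ does give lengths $\{2,4\}$, but it is unnecessary.

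With the count corrected, your proof is complete and differs from the paper's mainly in organisation. You reduce all of (a), (b), (c) to the abelian case up front and then rely on Lemma~\ref{lem:abel} throughout. The paper instead runs the cycle (a)$\Rightarrow$(b)$\Rightarrow$(c)$\Rightarrow$(d)$\Rightarrow$(a). It proves $\daleth(G)\ge 3$ in (b)$\Rightarrow$(c) for arbitrary $G$ by explicit sequences: $g^{[\ord(g)]}\bdot(g^{-1})^{[\ord(g)]}$, or the $C_2\times C_2$ configuration. It invokes Lemma~\ref{lem:min} only in (c)$\Rightarrow$(d).

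Your Part~1 is the paper's argument in contrapositive form, and the detour about $\mathsf c=2$ can simply be dropped.
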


\begin{proof}
1. This part is already known and included only for completeness.
If $|G| \le 2$, then it follows by Lemma~\ref{lem:structure}.1 that $\mathcal B (G)$ is a factorial monoid, so that $\mathsf c (G) = 0$.
If $\mathsf c (G) \le 2$, then in view of (\ref{eq:ine}), we infer that $\Delta (G) = \emptyset$, so that the monoid $\mathcal B (G)$ is half-factorial.
Thus, it follows again by Lemma~\ref{lem:structure}.1 that $|G| \le 2$.

\smallskip
2. (a) $\Rightarrow$ (b) By item 1, $|G| \ge 3$, and so $\mathcal B (G)$ is not factorial by Lemma~\ref{lem:structure}.1.
In view of (\ref{eq:ine}), we obtain that $2 + \max \Delta (G) \le \mathsf c (G) = 3$, whence $\max \Delta (G) \le 1$.

(b) $\Rightarrow$ (c) Then, $\mathcal B (G)$ is not factorial, and so $|G| \ge 3$ by Lemma~\ref{lem:structure}.1.
Thus, in view of (\ref{eq:ine}), $\daleth (G) \le 2 + \max \Delta (G) = 3$, and hence it suffices to show that $3 \le \daleth (G)$.
If $g \in G$ with $\ord (g) \ge 3$, then we are done.
Suppose that $G$ has no elements of order 3.
This means that $G$ must be abelian, and since $\Delta (G) \neq \emptyset$, we infer that $G$ has distinct two elements of order 2, say $g$ and $h$.
Then, it is east to see that $g \bdot h \bdot (gh)$, $g^{[2]}$, $h^{[2]}$, and $(gh)^{[2]}$ are all minimal product-one sequences, and $\big( g \bdot h \bdot (gh) \big)^{[2]} = g^{[2]} \bdot h^{[2]} \bdot (gh)^{[2]}$, whence $3 \le \daleth (G)$.

(c) $\Rightarrow$ (d) Since $\daleth (G) = 3$, we infer that every element of $G$ has the order at most 3.
To show that $G$ must be abelian, assume to the contrary that $G$ is non-abelian.
Then, $G$ must have at least one $g \in G$ with $\ord (g) = 3$.
Since $G$ is non-abelian, there exists an element $h \in G$ such that $gh \neq hg$.
In either case when $\ord (h) = 2$ or $\ord (h) = 3$, Lemma~\ref{lem:min} shows that there exist minimal product-one sequences $U, V$ over $G$ with $\{ 2, 4 \} \subseteq \mathsf L (U \bdot V)$ and $3 \notin \mathsf L (U \bdot V)$.
Hence, $4 \le \daleth (G)$, a contradiction.
Thus, $G$ must be abelian, and it follows by Lemma~\ref{lem:abel}.1 that $G$ is isomorphic to $C_3$, or $C_2 \times C_2$, or $C_3 \times C_3$.

(c) $\Rightarrow$ (a) This follows by Lemma~\ref{lem:abel}.2.
\end{proof}

\smallskip
We now consider the following splitting property:

\medskip
\begin{itemize}
\item[\namedlabel{itm:P}{\bf P}.] If $g_1 \bdot \ldots \bdot g_{\ell} \in \mathcal A (G)$, and $g_1 = h_1 h_2$ with $h_1, h_2 \in G$, then the sequence $h_1 \bdot h_2 \bdot g_2 \bdot \ldots \bdot g_{\ell} \in \mathcal B (G)$ is either an atom or a product of only two atoms.
\end{itemize}

\smallskip
\noindent
Note that every abelian group satisfies Property {\bf P}.
We have so far only two types in non-abelian case as follows: It has been shown that the quaternion group satisfies Property {\bf P} using the structure of minimal product-one sequences with the help of a computer program; however, a dihedral group $D_{2n}$ with $n \ge 9$ does not satisfy Property {\bf P} (see \cite[Example 5.3]{Oh20}).

We say that a finite group is {\it minimal non-abelian} if it is non-abelian and every proper subgroup is abelian.
R\'edei \cite{Red47} gave a complete characterization of these groups, and in particular, for a prime $p$, minimal non-abelian $p$-groups can be characterized (see \cite[Exercise 1.8a]{Be08} for written in a modern language).
We present a characterization of a specific subclass of finite groups whose commutator subgroup has two elements (see \cite{Mi38} for a general information for groups whose commutator subgroup has two element).

\smallskip
\begin{proposition} \label{pro:min}~
Let $G$ be a finite group.
The following statements are equivalent:
\begin{enumerate}
\item[(a)] $|G'| = 2$ and $\Syl_2 (G)$ is a direct product of an abelian group and a minimal non-abelian group.

\smallskip
\item[(b)] $G = G_1 \times G_2$, where $G_1$ is abelian and $G_2$ is isomorphic to one of the following groups.
		\begin{itemize}
		\smallskip
		\item[(i)] A metacyclic group $\langle a, b \mid a^{2^{m}} = b^{2^{n}} = 1_G, \text{ and } \, ba = a^{1+2^{m-1}}b \rangle$ of order $2^{m+n}$ for $m \ge 2$, and $n \ge 1$.
		
		\smallskip
		\item[(ii)] A non-metacyclic group $\langle a, b \mid a^{2^{m}} = b^{2^{n}} = c^{2} = 1_G, \text{ and } \, ab = cba, ac = ca, bc = cb \rangle$ of order $2^{m+n+1}$ for $m + n \gneq 2$.
		
		\smallskip
		\item[(iii)] The quaternion group $Q_8$ of order 8.
		\end{itemize}
\end{enumerate}
\end{proposition}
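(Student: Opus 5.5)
The argument has three ingredients: the structure of groups with $|G'|=2$, R\'edei's classification of minimal non-abelian $2$-groups, and a direct verification for the converse direction.

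\emph{Step 1: splitting off the odd part in (a) $\Rightarrow$ (b).} Assume $|G'|=2$. Then $G'$ is a normal subgroup of order $2$, hence $G' \le Z(G)$, and $G$ is nilpotent of class at most $2$. Indeed, for each $x\in G$ the map $g\mapsto [x,g]$ is then a homomorphism $G\to G'$, and $G/Z(G)$ is an elementary abelian $2$-group. Since $G$ is nilpotent, it is the direct product of its Sylow subgroups. For an odd prime $p$, the commutator subgroup of $\Syl_p(G)$ is a subgroup of $G'$, and its order is a power of $p$. Because $|G'|=2$, this forces $\Syl_p(G)$ to be abelian. Write $P=\Syl_2(G)$. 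By hypothesis $P = A\times M$ with $A$ abelian and $M$ minimal non-abelian. Since $P'=M'$ and $G'=P'$, we get $|M'|=2$. Now set $G_1 = A \times \prod_{p \text{ odd}} \Syl_p(G)$ and $G_2=M$. Then $G_1$ is abelian and $G=G_1\times G_2$.

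\emph{Step 2: identifying $M$.} We invoke R\'edei's classification in the form cited (\cite[Exercise 1.8a]{Be08}). A minimal non-abelian $2$-group is one of the following:
\begin{enumerate}
\item $Q_8$;
\item a metacyclic group $\langle a,b \mid a^{2^m}=b^{2^n}=1,\ a^b=a^{1+2^{m-1}}\rangle$ with $m\ge2$;
\item a non-metacyclic group $\langle a,b\mid a^{2^m}=b^{2^n}=c^2=1,\ [a,b]=c,\ [c,a]=[c,b]=1\rangle$ with $m+n\ge 3$, where the case $m=n=1$ is $D_8$.
\end{enumerate}
In all three families $|M'|=2$; in fact every minimal non-abelian $p$-group has $|M'|=p$. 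So no case is excluded, and the list coincides with (i)--(iii). A small point needs care. In (ii) the case $m=n=1$ gives $D_8$, which is excluded by $m+n\gneq 2$. This is because $D_8$ already appears in family (i) with $m=2$, $n=1$. Likewise $Q_8$ is listed separately because it is neither of the split forms. The main obstacle I expect is matching R\'edei's normalizations (the roles of $m$, $n$, the exact relation $ab=cba$) to the presentations stated here without gaps or overlaps. I would handle this by checking directly that the commutator relation in (i) yields $[b,a]=a^{2^{m-1}}$, of order $2$, and in (ii) yields $[a,b]=c$.

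\emph{Step 3: (b) $\Rightarrow$ (a).} Suppose $G=G_1\times G_2$ as in (b). Then $G'=G_2'$. For each of (i)--(iii), the group is generated by $a$ and $b$, and the single commutator $[a,b]$ is central of order $2$. Using the class-$2$ identities, this gives $|G_2'|=2$. Next, $\Syl_2(G)=\Syl_2(G_1)\times G_2$. Here $\Syl_2(G_1)$ is abelian, and $G_2$ is minimal non-abelian by R\'edei's classification. One can also see minimality directly: every maximal subgroup contains $\Phi(G_2)=\langle a^2,b^2,[a,b]\rangle$ and is generated by that subgroup together with one further element, and such a subgroup is abelian because $\Phi(G_2)$ is central in these families. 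This establishes (a).
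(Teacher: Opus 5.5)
Your proposal is correct and follows essentially the paper's route. In both, $G' \le Z(G)$ gives nilpotency and abelian odd Sylow subgroups; then $G_2 = M$ is split off and identified by R\'edei's classification; and for the converse, every maximal subgroup is shown to contain the central subgroup $\langle a^2, b^2, [a,b]\rangle$ (the paper's $Z$) with cyclic quotient, hence is abelian. The only cosmetic difference is that you obtain abelianness of the odd Sylow subgroups from Lagrange applied to $\Syl_p(G)' \le G'$, whereas the paper uses centrality of odd-order elements.
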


\begin{proof}
(a) $\Rightarrow$ (b) Since $|G'| = 2$, it is easy to show that $G' \subseteq Z (G)$ and $g \in Z (G)$ for all $g \in G$ with odd order.
Since $G' \subseteq Z (G)$, it follows that $G$ is nilpotent, and so $G$ is a direct product of its Sylow subgroups.
Since every element of odd order is central, every Sylow $p$-subgroup, with $p \ge 3$, is abelian.
Thus, we infer that
\begin{equation} \label{eq:cs2}~
  G = \Pi_{p} \Syl_p (G) = G_0 \times \Syl_2 (G)
\end{equation}
for some abelian group $G_0$.
By assumption, $\Syl_2 (G) = H \times G_2$ for some abelian group $H$ and a minimal non-abelian group $G_2$.
Thus, $G = G_0 \times \Syl_2 (G) = (G_0 \times H) \times G_2 = G_1 \times G_2$, where $G_1$ is an abelian group.
Since $G_2$ is minimal non-abelian 2-group, the assertion follows by \cite[Exercise 1.8a]{Be08}.

(b) $\Rightarrow$ (a) Since $G' = (G_1 \times G_2)' = G'_1 \times G'_2 \cong G'_2$, it is clear that $|G'| = 2$, and so $G$ is nilpotent.
If $|G_1|$ is odd, then $\Syl_2 (G) = G_2$.
If $|G_1|$ is even, then $G_1 = \Syl_2 (G_1) \times G_0$ for some abelian group $G_0$, and thus $\Syl_2 (G) = \Syl_2 (G_1) \times G_2$, with $\Syl_2 (G_1)$ abelian.
Thus, it suffices to show that three groups described in (b) is minimal non-abelian.
It is well-known that $Q_8$ is minimal non-abelian.

Let $K$ be a group described in (ii).
Since $K$ is finite, it is enough to show that every maximal subgroup of $K$ is abelian.
Let $M$ be a maximal subgroup of $K$.
Since $K$ is 2-group, it follows that $M$ is normal in $K$ and $K/M \cong C_2$, which ensures that $K' \subseteq M$ and $k^{2} \in M$ for every $k \in K$.
In particular, $a^{2}, b^{2}, c \in M$, and so $Z := \langle a^{2}, b^{2}, c \rangle \le M$.
We observe that $Z \subseteq Z (K)$, so that $Z$ is an abelian subgroup of $K$.
Since $K = \langle a, b \rangle$, we infer that $K/Z = \langle aZ, bZ \rangle \cong C_2 \times C_2$, and $M/Z$ is a proper subgroup of $K/Z$, i.e., $M/Z$ is a cyclic group.
To show that $M$ is abelian, let $m_1, m_2 \in M$.
Then, $m_1 = m^{i}z_1$ and $m_2 = m^{j}z_2$ for some $i, j \in [1,2]$ and $z_1, z_2 \in Z$.
Since $Z \subseteq Z(K)$, we obtain that $m_1 m_2 = m^{i} z_1 m^{j} z_2 = m^{i} m^{j} z_1 z_2 = m^{j} z_2 m^{i} z_1 = m_2 m_1$, and thus $M$ is abelian.

Now let $K$ be the group described in (i), and $M$ be a maximal subgroup.
Then, it is easy to see that $a^{2}, b^{2} \in M$, and $Z := \langle a^{2}, b^{2} \rangle \subseteq Z (K)$.
Moreover, $c:= a^{2^{m-1}} \in K' \setminus \{ 1_K \}$, and since $c = (a^{2})^{2^{m-2}}$, we obtain that $c \in M$.
Thus, we infer that $M$ is abelian by the same argument as used in the case (ii).
\end{proof}

\smallskip
In particular, by (\ref{eq:cs2}), the minimal non-abelian groups whose commutator subgroup has two elements are precisely the 2-groups listed in Proposition~\ref{pro:min}.
Although we are not aware of all finite groups satisfying Property {\bf P}, the following result shows that at least the infinite class of finite groups described in Proposition~\ref{pro:min} does satisfy Property {\bf P}.

\smallskip
\begin{theorem} \label{thm:P}~
Let $G$ be a finite group.
If $G$ is abelian, or if $|G'| = 2$ and the Sylow $2$-subgroup is a direct product of an abelian group and a minimal non-abelian group, then $G$ satisfies Property {\bf P}.
\end{theorem}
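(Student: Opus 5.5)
The plan is to argue directly from a factorization $S := h_1 \bdot h_2 \bdot g_2 \bdot \ldots \bdot g_\ell = W_1 \bdot \ldots \bdot W_k$ into atoms, and to show that $k \ge 3$ forces $A := g_1 \bdot \ldots \bdot g_\ell$ to be a product of two non-trivial product-one sequences. First, $S \in \mathcal B (G)$, because a product-one ordering of $A$ with $g_1$ replaced by the adjacent pair $h_1 h_2$ is still product-one.

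Suppose $k \ge 3$. Each $h_i$ occurs in exactly one atom, so some atom, say $W_1$, contains neither $h_1$ nor $h_2$. Then $W_1 \mid g_2 \bdot \ldots \bdot g_\ell$, so $W_1$ is a non-trivial proper subsequence of $A$ in $\mathcal F (G)$. The complement $R := W_2 \bdot \ldots \bdot W_k$ is a product of atoms, hence product-one, and $h_1 \bdot h_2 \mid R$. Write $R = h_1 \bdot h_2 \bdot R_0$ and $R^{*} := g_1 \bdot R_0$. Then $A = W_1 \bdot R^{*}$, so it suffices to show that $R^{*} \in \mathcal B (G)$; this contradicts $A \in \mathcal A (G)$. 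The same argument applies when $k \ge 3$ and an atom meets both $h_1, h_2$, since at least two atoms then avoid them.

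\emph{Abelian case.} Here $\pi (R^{*}) = \{ h_1 h_2 \sigma (R_0) \} = \pi (R) = \{ 1_G \}$, so $R^{*}$ is product-one and we are done.

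\emph{Case $|G'| = 2$.} Write $G' = \{ 1_G, c \}$, with $c$ central. Every product set lies in a single $G'$-coset, and
\[
\pi (R^{*}) \subseteq \pi (R) \subseteq \{ x, xc \}
\]
for some $x$. I would first prove the following lemma: $|\pi (T)| = 2$ if and only if $\supp (T)$ contains two non-commuting elements. Swapping two adjacent non-commuting terms multiplies the product by $c$, and if all terms commute the product is unique. With this lemma, if $\supp (R^{*})$ contains a non-commuting pair, then $\pi (R^{*}) = \pi (R) \ni 1_G$ and we are done. The delicate situation, which is exactly where a general group can fail (compare $i \bdot j \bdot k$ in $Q_8$, where $(ij) \bdot k$ is not product-one), is when all terms of $R^{*}$ commute and $\pi (R^{*}) = \{ c \}$.

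To handle this situation I would use Proposition~\ref{pro:min}. The factor $G_1$ is central, so only $G_2$-components affect commutation. In the minimal non-abelian $2$-group $G_2$, every non-central $x$ has centralizer $\langle x, \Phi (G_2) \rangle$, a maximal subgroup, and $G_2 / \Phi (G_2) \cong C_2 \times C_2$. Thus two elements commute if and only if their images in this Klein four-group are linearly dependent, that is, one of them is trivial or the two images are equal. In the delicate situation, all terms of $g_1 \bdot R_0$ therefore lie in one maximal subgroup $M = \langle y, \Phi \rangle$, while $h_1, h_2$ (which do not commute, otherwise $\pi (R^{*}) = \pi (R)$) have images in two of the other classes, with $h_1 h_2 \in M$.

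I would then use the freedom in choosing the decomposition. Exchange a suitable term $t$ between $W_1$ and $R$, or regroup $W_1$ with parts of $R_0$, to produce a new splitting $A = W_1' \bdot R'$ in which either the merged part contains a non-commuting pair, or the parity of the product is corrected by moving a term $t \notin M$. For such a move, note that $W_1 \in \mathcal B (G)$ has a term outside $M$ unless $W_1 \subseteq M$. If $W_1 \subseteq M$ as well, then all terms of $A$ except possibly $g_1$ lie in the abelian group $M$, and $g_1 \in M$ too, so $A$ is a sequence over an abelian subgroup. The standard zero-sum argument then shows that $A$ is not an atom unless $k \le 2$.

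I expect this last step to be the main obstacle: organizing the case split on the Klein-four classes of $h_1, h_2$ and of the terms of $W_1$, and checking that in each case a legal exchange exists. For $Q_8$, where $\Phi = Z = \{ \pm 1 \}$, I would verify the cases directly as a sanity check.
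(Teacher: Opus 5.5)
Your set-up is sound: $S\in\mathcal B(G)$, the choice of an atom $W_1$ avoiding $h_1,h_2$, the reduction to splitting $A$, the abelian case, and the lemma that for $|G'|=2$ one has $|\pi(T)|=2$ exactly when $T$ has two non-commuting terms. The gap is the ``delicate situation''. That situation is the whole content of the theorem, and for it you only say that a suitable exchange of terms should exist. No argument is given, so the proof is incomplete at its essential step.

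Your description of that situation is also wrong. Commuting $h_1,h_2$ do \emph{not} force $\pi(R^{*})=\pi(R)$. Take, as sequences in $Q_8$, $h_1=h_2=J$ and $R_0=I\bdot(-I)$. Then $J\cdot I\cdot J\cdot(-I)=E$, so $\pi(R)=\{E,-E\}$. But $\pi(R^{*})=\pi\big((-E)\bdot I\bdot(-I)\big)=\{-E\}$. So your parenthetical wrongly discards the case where $h_1,h_2$ have the same image in $G_2/\Phi(G_2)$ outside the class of $M$, so that $g_1=h_1h_2$ is central. This is exactly the case a correct proof must dispose of.

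The missing idea is to use \emph{two} atoms avoiding $h_1,h_2$, not one. You note that two exist when $h_1,h_2$ share an atom, but you never use the second.

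\textbf{Different atoms.} If $h_1\mid W_2$ and $h_2\mid W_3$, no hypothesis on $G$ is needed. Rotate product-one orderings so that $h_1$ is last in $W_2$ and $h_2$ is first in $W_3$. Then $g_1\bdot(W_2\bdot h_1^{[-1]})\bdot(W_3\bdot h_2^{[-1]})$ is product-one, so $A$ splits.

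\textbf{Same atom.} If $h_1\bdot h_2\mid W_2$, write $W_2=h_1\bdot h_2\bdot W_2^{*}$. Atomicity of $A$ makes $A\bdot W_1^{[-1]}$ and $A\bdot W_3^{[-1]}$ non-product-one, with products in $G'$. By your lemma, each therefore has pairwise commuting terms.
\begin{itemize}
\item If every term of $W_1$ commutes with every term of $W_3$, then $A$ lies in an abelian subgroup and splits.
\item Otherwise pick non-commuting $x\mid W_1$ and $y\mid W_3$. Their projections generate the minimal non-abelian factor $G_2$. Every term of $A\bdot(W_1\bdot W_3)^{[-1]}$ commutes with both $x$ and $y$, hence is central in $G$.
\item In particular $g_1$ is central, which gives $h_2h_1=h_2g_1h_2^{-1}=g_1=h_1h_2$. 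Also $W_2^{*}$ consists of central elements.
\item Hence all terms of $W_2$ commute, and $\pi(W_2)=\{g_1\sigma(W_2^{*})\}=\{1_G\}$. So $A=(g_1\bdot W_2^{*})\bdot W_1\bdot W_3\bdot\ldots\bdot W_k$ is not an atom.
\end{itemize}
This is the paper's argument. It needs only that anything commuting with two non-commuting elements is central. It uses no Klein-four case split and no exchange of terms.
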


\begin{proof}
Suppose that either $G$ is abelian, or that $|G'| = 2$ and the $\Syl_2 (G)$ is a direct product of an abelian group and a minimal non-abelian group.
To show that $G$ satisfies Property {\bf P}, let $U \in \mathcal A (G)$ and $g \in \supp (U)$ with $g = h_1 h_2$ for some $h_1, h_2 \in G$.
Then, $U' := h_1 \bdot h_2 \bdot U \bdot g^{[-1]} \in \mathcal B (G)$, and so
\[
  U' = V_1 \bdot \ldots \bdot V_k
\]
for some atoms $V_1, \ldots, V_k \in \mathcal A (G)$.
Assume to the contrary that $k \ge 3$.
If $h_1$ and $h_2$ split into distinct two atoms $V_i$, say $V_1$ and $V_2$, then the product-one equations ensure that
\[
  V_1 = V'_1 \bdot h_1 \quad \mbox{ and } \quad V_2 = h_2 \bdot V'_2
\]
with $\sigma (V'_1) h_1 = 1_G = h_2 \sigma (V'_2)$ for some $\sigma (V'_1) \in \pi (V'_1)$ and $\sigma (V'_2) \in \pi (V'_2)$.
Then, $V''_1 := V'_1 \bdot (h_1h_2) \bdot V'_2 = V'_1 \bdot g \bdot V'_2$ is a product-one sequence, and thus $U = V''_1 \bdot V_3 \bdot \ldots \bdot V_k$.
Since $k \ge 3$, it follows that $U$ can be factored into a product of at least two product-one sequences, contradicting that $U \in \mathcal A (G)$.
Thus, there must exist some $i \in [1,k]$ such that $V_i$ contains both $h_1$ and $h_2$, say $V_1$.
Since $k \ge 3$, we infer that $V_2$ and $V_3$ are proper product-one subsequences of $U$.
If $G$ is abelian, then $U \bdot V^{[-1]}_2$ must be again a product-one sequence, which leads to a contradiction to $U \in \mathcal A (G)$.
Thus, from now on, we may assume that $G$ is non-abelian.
Then, since $U \in \mathcal A (G)$, we obtain that $U \bdot V^{[-1]}_2 \notin \mathcal B (G)$.
Since $U$ and $V_2$ are product-one sequences, we obtain that $\pi (U \bdot V^{[-1]}_2) \subseteq G'$.
Since $|G'| = 2$, it follows that $|\pi \big( U \bdot V^{[-1]}_2 \big) | = |G' \setminus \{ 1_G \}| = 1$, and thus all terms in $U \bdot V^{[-1]}_2$ commute with each other.
By swapping the role between $V_2$ and $V_3$, the same argument shows that all terms in $U \bdot V^{[-1]}_3$ also commute with each other.
If every elements from $V_2$ and $V_3$ commute with each other, then $U$ can be viewed as a sequence over abelian subgroup of $G$, and since $U$ has a proper product-one subsequence $V_2$, we infer that $U \bdot V^{[-1]}_2$ is also a product-one sequence, a contradiction.

Thus, there exist $g_1 \in \supp (V_2)$ and $g_2 \in \supp (V_3)$ such that $g_1 g_2 \neq g_2 g_1$.
By Proposition~\ref{pro:min}, $G = G_1 \times G_2$ with abelian $G_1$ and minimal non-abelian $G_2$.
Let $\varphi \colon G \to G_2$ be a projection.
Then, $\varphi (g_1)$ and $\varphi (g_2)$ are non-commuting elements in $G_2$, and since $G_2$ is minimal non-abelian, it follows that $\big\langle \varphi (g_1), \varphi (g_2) \big\rangle = G_2$.
Since every term in $\varphi \big( U \bdot (V_2 \bdot V_3)^{[-1]} \big)$ commutes with both $\varphi (g_1)$ and $\varphi (g_2)$, we infer that all elements in $\supp \big( \varphi \big( U \bdot (V_2 \bdot V_3)^{[-1]} \big) \big)$ belong to the center of $G_2$, in particular, all terms in $\varphi \big( V_1 \bdot g \bdot (h_1 \bdot h_2)^{[-1]} \big)$ are center elements in $G_2$.
This means that $h_1$ and $h_2$ commute with all terms in $V_1$, and so the product-one equation of $V_1$ shows that
\[
  1_G = (h_1 h_2) \sigma \big( V_1 \bdot (h_1 \bdot h_2)^{[-1]} \big) \in \pi \big( V_1 \bdot g \bdot (h_1 \bdot h_2)^{[-1]} \big) \,.
\]
Thus, $U = \big( V_1 \bdot g \bdot (h_1 \bdot h_2)^{[-1]} \big) \bdot V_2 \bdot V_3 \bdot \ldots \bdot V_k$ is a product of at least 3 product-one sequences, again a contradiction.
Therefore, $G$ does satisfy Property {\bf P}.
\end{proof}

\smallskip
In the previous theorem, the condition of minimality with respect to being non-abelian is necessary for Property {\bf P}, as the following example shows.

\smallskip
\begin{example} \label{ex:P}~
\begin{enumerate}
\item Let $G = \langle a, b, c, d \mid a^{4} = b^{2} = d^{2} = 1_G, c^{2} = a^{2}, ba = a^{-1}b, ac = ca, ad = da, bc = cb, bd = db, dc = a^{2}cd \rangle$ be a finite group of order 32 (we take its generators and relations from the database of finite groups \cite{GNs}, and this group is identified with $\texttt{SmallGroup(32,49)}$, which represents the $49^{\textnormal{th}}$ group of order 32 in the Small Groups Library of GAP \cite{GAP}).

	 Then, $G' = \{ 1_G, a^{2} \} = Z (G)$, and both $\langle a, b \rangle$ and $\langle c, d \rangle$ are isomorphic to $D_8$.
	 Consider the sequence
	 \[
	   S = b^{[2]} \bdot (ab)^{[2]} \bdot d \bdot (a^{2}d) \in \mathcal F (G) \,.
	 \]
	 \begin{itemize}
	 \item[(a)] In the group $G$, $a^{2}$ is a central element implies that $b (ab) b (ab) d (a^{2}d) = b a^{2}b d a^{2}d = 1_G$, whence $S \in \mathcal B (G)$.
	
	 \smallskip
	 \item[(b)] To show $S \in \mathcal A (G)$, we assume to the contrary that $S = S_1 \bdot S_2$ for non-trivial $S_1, S_2 \in \mathcal B (G)$.
	 		By symmetry, we further assume that $|S_1| \le |S_2|$.
			If $|S_1| = 2$ and $|S_2| = 4$, then since $a, b, c, d$ are generators of $G$, we obtain that $S_1 \in \{ b^{[2]}, (ab)^{[2]} \}$.
			In either case, $d \bdot (a^{2}d) \mid S_2$, and since $d$ commutes with both $a$ and $b$, we infer that $\pi (S_2) = \{ a^{2} \}$, a contradiction to $S_2 \in \mathcal B (G)$.
			If $|S_1| = 3 = |S_2|$, then since $d$ commutes with both $a$ and $b$ and $\ord (d) = 2$, it follows that $d \bdot (a^{2}d) \mid S_i$ for some $i \in [1,2]$, say $S_1$.
			Since $|S_1| = 3$, we obtain that $S_1 = b \bdot d \bdot (a^{2}d)$ or $S_1 = (ab) \bdot d \bdot (a^{2}d)$, but it follows that $\pi (S_1) = \{ a^{2}b \}$ or $\pi (S_1) = \{ a^{3}b \}$, which, in either case, yields a contradiction.
			Therefore, $S$ must be an atom in $\mathcal B (G)$.
			
	\smallskip
	\item[(c)] Let $a^{2}d = (cd) (a^{2}c)$ for $cd, a^{2}c \in G$.
		      Then, $(a^{2}c) \bdot d \bdot (cd) \in \mathcal A (G)$, because $1_G = (a^{2}c) (cd) d$ and it never be factored into a product of two product-one subsequences.
		      Thus, a sequence
		      \[
		        S' := \big( b^{[2]} \big) \bdot \big( (ab)^{[2]} \big) \bdot \big( (a^{2}c) \bdot d \bdot (cd) \big)
		      \]
		      is a product of 3 atoms.
	\end{itemize}
	
	\smallskip
	Therefore, although $|G'| = 2$, $G$ does not satisfy Property {\bf P}.
	
\smallskip
\item Let $D_{2n}$ be the dihedral group of order $2n$ with $n \ge 3$ odd.
	 Then, a sequence $U = \tau^{[n]} \bdot (\alpha^{i}\tau)^{[n]}$ for $i \in [1,n-1]$ with $\gcd (i, n) = 1$, is a minimal product-one sequence of length $2n$ for some generators $\alpha, \tau \in D_{2n}$ (see \cite[Theorem 4.1]{Oh-Zh20a}).
	 Since $n \ge 3$ is odd, there exists $j \in [1,n-1]$ such that $2j \equiv -i \pmod{n}$.
	 Then, $\tau = \alpha^{j} (\alpha^{n-j}\tau)$, and so
	 \[
	   U' = \alpha^{j} \bdot \alpha^{n-j}\tau \bdot \tau^{[n-1]} \bdot (\alpha^{i}\tau)^{[n]} = \big( \alpha^{n-j}\tau \bdot \alpha^{j} \bdot \alpha^{i}\tau \big) \bdot \big( \tau \bdot \tau \big)^{[\frac{n-1}{2}]} \bdot \big( \alpha^{i}\tau \bdot \alpha^{i}\tau \big)^{[\frac{n-1}{2}]}
	 \]
	 is a product of at least 3 atoms.
	 Thus, $D_{2n}$, with $n$ odd, does not satisfy Property {\bf P}.
	 More generally, the dihedral group $D_{2n}$, with $n \ge 9$, does not satisfy Property {\bf P} (see \cite[Example 5.3]{Oh20}).
\end{enumerate}
\end{example}

\smallskip
It is worthwhile to mention that the set $\Delta (G)$ of distances is a finite interval for finite groups satisfying Property {\bf P} (see \cite[Theorem 5.5]{Oh20}).
Next, we show that $\daleth^{*} (G)$ and the set $\mathsf {Ca} (G)$ of catenary degrees are also finite intervals for finite groups satisfying Property {\bf P}.
We refer the reader to \cite[Proposition 3.3]{Fa-Ge19} and \cite[Lemma 4.6]{Ge-Zh19a} for the results in the abelian setting.
Some arithmetic properties carry over along the lines of the proofs as in the abelian setting, but we need the following observation.

\smallskip
\begin{lemma} $($\cite[Lemma 5.1]{Oh20}$)$ \label{lem:fac}~
Let $k, \ell \in \N$ with $k \lneq \ell$, and $U_1, \ldots, U_k, V_1, \ldots, V_{\ell} \in \mathcal A (G)$ such that $U_1 \bdot \ldots \bdot U_k = V_1 \bdot \ldots \bdot V_{\ell}$.
Then, there exist $\mu \in [1,k], \,\, \lambda, \lambda' \in [1,\ell]$ with $\lambda \neq \lambda',$ and $g_1, g_2 \in G$ such that $U_{\mu} = g_1 \bdot g_2 \bdot \ldots \bdot g_m$ with $m \ge 2$, $1_G = g_1 \ldots g_m$, $g_1 \t V_{\lambda}$, and $g_2 \t V_{\lambda^{'}}$ in $\mathcal F (G)$.
\end{lemma}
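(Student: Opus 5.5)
Since $k < \ell$, the factorization $V_1 \bdot \ldots \bdot V_\ell$ has strictly more atoms than $U_1 \bdot \ldots \bdot U_k$. The plan is a pigeonhole argument on how the terms of each $U_\mu$ are distributed among the $V_\lambda$.

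First fix, once and for all, an assignment of the terms of $S := U_1 \bdot \ldots \bdot U_k$ to the $V_\lambda$'s. Every term occurrence of $S$ is then labelled by some index $\lambda \in [1,\ell]$ such that $V_\lambda$ is exactly the subsequence carrying label $\lambda$. Each $U_\mu$ is likewise a labelled subsequence.

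Assume, for contradiction, that no $\mu$ admits an ordering $U_\mu = g_1 \bdot \ldots \bdot g_m$ with $g_1 \cdots g_m = 1_G$ whose first two terms carry distinct labels.

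We first show that every $U_\mu$ is then monochromatic. If $|U_\mu| = 1$, then $U_\mu = 1_G$ and this is trivial. Otherwise pick any product-one equation $1_G = g_1 \cdots g_m$ of $U_\mu$. Product-one equations are invariant under cyclic rotation, since $g_1 \cdots g_m = 1_G$ implies $g_2 \cdots g_m g_1 = 1_G$. Applying the assumption to every rotation shows that any two cyclically adjacent terms $g_i, g_{i+1}$ carry the same label. Going around the cycle, all terms of $U_\mu$ carry a single label.

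Next, each $U_\mu$ is contained in one $V_{\lambda(\mu)}$. Hence every $V_\lambda$ is a concatenation of those $U_\mu$ with $\lambda(\mu) = \lambda$. Since $V_\lambda$ is nontrivial, at least one $U_\mu$ lands in it. So $\mu \mapsto \lambda(\mu)$ is a surjection $[1,k] \to [1,\ell]$, contradicting $k < \ell$.

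The resulting ordering has $m \ge 2$ automatically, since it has two terms with different labels. The only real subtlety is the labelling step. When equal group elements occur in several $V_\lambda$, "$g_1 \mid V_\lambda$" must be read via the fixed assignment of occurrences. This gives the stronger conclusion that $g_1$ is an occurrence belonging to $V_\lambda$ and $g_2$ one belonging to $V_{\lambda'}$, which in particular gives divisibility in $\mathcal F(G)$.

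Throughout, I would take care to use only cyclic rotations of a single product-one equation. No commutativity is needed, so the argument works for non-abelian $G$.
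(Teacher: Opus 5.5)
Your proof is correct. You fix an assignment of occurrences, use cyclic rotations of a single product-one equation to show that otherwise every $U_\mu$ lies inside one $V_\lambda$, and then count to get $k \ge \ell$; this also correctly avoids the abelian shortcut ``$U_\mu \mid V_\lambda$ in $\mathcal F (G)$ forces $U_\mu = V_\lambda$'', which fails here because $\mathcal B (G) \hookrightarrow \mathcal F (G)$ is not a divisor homomorphism for non-abelian $G$. The paper does not reprove this lemma but imports it from \cite[Lemma 5.1]{Oh20}, so there is no in-paper proof to compare against; your pigeonhole-plus-rotation argument is the natural proof of the statement.
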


\smallskip
\begin{lemma} \label{lem:Ca}~
Let $G$ be a finite group with $|G| \ge 3$.
Suppose that $G$ satisfies Property {\bf P}.
\begin{enumerate}
\item For every $S \in \mathcal B (G)$ with $2 \in \mathsf L (S)$ and $\min \big(\mathsf L (S) \setminus \{ 2 \}\big) \ge 4$, there exists $T \in \mathcal B (G)$ with $2 \in \mathsf L (T)$ such that $|T| \lneq |S|$ and $\min \big( \mathsf L (T) \setminus \{ 2 \}\big) \ge \min \big(\mathsf L (S) \setminus \{ 2 \}\big) - 1$.

\smallskip
\item For every $S \in \mathcal B (G)$ with $\mathsf c (S) \ge 4$, there exists $T \in \mathcal B (G)$ such that $|T| \lneq |S|$ and $\mathsf c (T) \ge \mathsf c (S) - 1$.
\end{enumerate}
\end{lemma}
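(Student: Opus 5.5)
We adapt the abelian arguments of \cite[Proposition 3.3]{Fa-Ge19} and \cite[Lemma 4.6]{Ge-Zh19a}. The commutativity-based step ``merge two terms of an atom into one'' is replaced by Lemma~\ref{lem:fac} combined with Property {\bf P}. The basic operation is a \emph{contraction}. Given $S \in \mathcal B (G)$ and two factorizations $z = U_1 \bdot \ldots \bdot U_k$ and $z' = V_1 \bdot \ldots \bdot V_\ell$ of $S$ with $k \lneq \ell$, Lemma~\ref{lem:fac} yields an atom $U_\mu = g_1 \bdot g_2 \bdot \ldots \bdot g_m$ with $g_1 g_2 \cdots g_m = 1_G$, where $g_1 \mid V_\lambda$ and $g_2 \mid V_{\lambda'}$ with $\lambda \neq \lambda'$. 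Set $g = g_1 g_2$ and
\[
T = S \bdot (g_1 \bdot g_2)^{[-1]} \bdot g ,
\]
so that $|T| = |S| - 1$. Then $U'_\mu := g \bdot g_3 \bdot \ldots \bdot g_m$ is product-one, and $V''_{\lambda} := V_\lambda \bdot g_1^{[-1]} \bdot g \bdot V_{\lambda'} \bdot g_2^{[-1]}$ is product-one, by the same concatenation of product-one equations used in the proof of Theorem~\ref{thm:P}. Conversely, Property {\bf P} applied to an atom of $T$ containing $g$ shows that splitting $g$ back into $g_1 \bdot g_2$ raises the length by at most one. Hence factorizations of $T$ and of $S$ correspond with lengths shifted by at most $1$, which is what produces the ``$-1$'' in both statements.

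For item 1, take $S = U_1 \bdot U_2$ with $\min(\mathsf L(S)\setminus\{2\}) = n \ge 4$, and pick a factorization $V_1 \bdot \ldots \bdot V_n$. Apply the contraction with $k = 2$, say $\mu = 1$, obtaining $T$. Then $T = U'_1 \bdot U_2$. If $U'_1$ is not an atom, then $\{2,3\} \subseteq \mathsf L(T)$ only under extra hypotheses, so we want to exclude this case. Indeed, if $U'_1 = W_1 \bdot W_2$, then splitting $g$ back in whichever $W_i$ contains it shows that $S$ has a factorization of length $3$ with $U_2$, contradicting $n \ge 4$; here we use Property {\bf P} on $W_i$, plus the case analysis of \cite[Lemma 5.1]{Oh20}. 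So $T$ is the product of the two atoms $U'_1$ and $U_2$, giving $2 \in \mathsf L(T)$. Now suppose $T = X_1 \bdot \ldots \bdot X_r$ with $r \ge 3$. The atom $X_j$ containing $g$ becomes, after the split $g \mapsto g_1 \bdot g_2$, a product of at most two atoms by Property {\bf P}, so $r$ or $r+1$ lies in $\mathsf L(S)\setminus\{2\}$. Therefore $r + 1 \ge n$, which is the claim.

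For item 2, write $N = \mathsf c(S) \ge 4$ and follow \cite[Lemma 4.6]{Ge-Zh19a}. Choose factorizations $z, z'$ of $S$ not connectable by an $(N-1)$-chain, and reduce to $\gcd(z, z') = 1$ with $|z| \le |z'|$. Apply the contraction to obtain $T$, together with factorizations $\tilde z, \tilde z'$ of $T$ (from $U'_\mu$ and $V''_\lambda$). Then show that an $(N-2)$-chain from $\tilde z$ to $\tilde z'$ lifts via Property {\bf P} to an $(N-1)$-chain in $\mathsf Z(S)$. Each atom containing $g$ splits into at most two atoms, so each step distance grows by at most $1$, and the endpoints lift to factorizations within distance $\le 3 \le N-1$ of $z$ and $z'$. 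This contradiction gives $\mathsf c(T) \ge N - 1$.

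The main obstacle is the lifting step in both items. Property {\bf P} says only that the split sequence has at most two atoms; it does not say that the factorizations obtained are independent of choices. So one must verify carefully that consecutive lifted factorizations differ by at most one additional atom, and that the endpoints of the lifted chain are close to $z$ and $z'$. In the non-abelian case this cannot use divisor-closedness, since a product-one subsequence of an atom's complement need not be product-one. I would handle this by always splitting within the single atom containing $g$ and keeping all other atoms fixed.
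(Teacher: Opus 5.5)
Item~1 is essentially the paper's argument. Item~2 has a genuine gap at the $z'$-end of the lifted chain.

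The factorization $\tilde z'$ of $T$ must be built from some factorization $W_1 \bdot \ldots \bdot W_t$ of $V''_\lambda$ (with $g \mid W_1$), and $V''_\lambda$ is in general not an atom. Splitting $g$ in $W_1$ gives $\bar y \bdot W_2 \bdot \ldots \bdot W_t \bdot \prod_{j \ne \lambda, \lambda'} V_j$ with $|\bar y| \le 2$. This is a factorization of $V_\lambda \bdot V_{\lambda'}$ times the remaining atoms of $z'$, and its distance from $z'$ can be as large as $t+1$. Nothing bounds $t$. For instance, in $C_n = \langle e \rangle$ take $V_\lambda = e^{[n]}$, $V_{\lambda'} = (e^{-1})^{[n]}$, $g_1 = e$, $g_2 = e^{-1}$. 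Then $V''_\lambda = e^{[n-1]} \bdot 1_G \bdot (e^{-1})^{[n-1]}$ has only the factorization $1_G \bdot (e \bdot e^{-1})^{[n-1]}$. Its lift $(e \bdot e^{-1})^{[n]}$ is at distance $n$ from $V_\lambda \bdot V_{\lambda'}$. So your claim that the endpoints lift to within distance $3$ holds only at $z$: there the lift of $\tilde z$ is $z$ itself, because $U'_\mu$ splits back into the atom $U_\mu$. Without the $z'$-end, the contradiction does not follow.

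The missing idea is an induction on $|S|$, which is how the paper closes the chain. Since $\gcd(z,z') = 1$, $|z| \le |z'|$ and $\mathsf d(z,z') \ge N \ge 4$, you have $|z'| \ge 4$, hence $|V_\lambda \bdot V_{\lambda'}| \lneq |S|$. There are two cases:
\begin{itemize}
\item If $\mathsf c(V_\lambda \bdot V_{\lambda'}) \ge N-1$, then $T := V_\lambda \bdot V_{\lambda'}$ already satisfies item~2.
\item If $\mathsf c(V_\lambda \bdot V_{\lambda'}) \le N-2$, then a chain in $\mathsf Z(V_\lambda \bdot V_{\lambda'})$, multiplied by $\prod_{j \ne \lambda,\lambda'} V_j$, joins the lifted endpoint to $z'$.
\end{itemize}
The paper phrases this via minimality of $|S|$ among sequences with catenary degree $\mathsf c(S)$. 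Taken literally, that only gives $\mathsf c(V_1 \bdot V_2) \neq \mathsf c(S)$, so the dichotomy above is the clean way to state it.

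Two smaller points.
\begin{itemize}
\item When $\mathsf v_g(T) \ge 2$ there is no ``single atom containing $g$''. Switching the designated atom between consecutive factorizations then costs up to $3$, not $1$. The paper handles this by inserting intermediate factorizations (its cases (a)--(c)), and this is where $N \ge 4$ is used.
\item In item~1, $U'_1 \in \mathcal A(G)$ holds for a trivial reason. If $U'_1 = W_1 \bdot W_2$ with $g \mid W_1$, then $U_1 = (W_1 \bdot g^{[-1]} \bdot g_1 \bdot g_2) \bdot W_2$, contradicting $U_1 \in \mathcal A(G)$. Your route via ``a factorization of length $3$'' breaks when Property~{\bf P} splits $W_1$ into two atoms, since length $4$ does not contradict $n \ge 4$. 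You should also record that $T = V''_\lambda \bdot \prod_{j \ne \lambda,\lambda'} V_j$ has a factorization of length at least $n-1 \ge 3$, so $\mathsf L(T) \setminus \{2\} \ne \emptyset$.
\end{itemize}
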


\begin{proof}
1. Let $S \in \mathcal B (G)$ with $\ell = \min \big( \mathsf L (S) \setminus \{ 2 \} \big) \ge 4$.
There exist $U_1, U_2, V_1, \ldots, V_{\ell} \in \mathcal A (G)$ such that
\[
  S = U_1 \bdot U_2 = V_1 \bdot \ldots \bdot V_{\ell} \,,
\]
where $U_1 \bdot U_2$ has no factorization of length in $[3, \ell-1]$.
Since $1_G \in \mathcal B (G)$ is a prime element, we may assume that $1_G$ does not pop up in any factorization of $S$.
This allows us to suppose that $U_1, U_2, V_1, \ldots, V_{\ell}$ are of length at least 2.
By Lemma~\ref{lem:fac}, after renumbering if necessary, there exist $g_1, g_2 \in G$ such that
\[
  g_1 \bdot g_2 \mid U_1 \,, \quad g_1 \mid V_1 \,, \quad \mbox{ and } \quad g_2 \mid V_2 \,,
\]
where $U_1 = g_1 \bdot g_2 \bdot U$ with $g_1 g_2 \sigma (U) = 1_G$ for some $\sigma (U) \in \pi (U)$.
Let $g_0 = g_1 g_2 \in G$, so that $U'_1 := g_0 \bdot U \in \mathcal A (G)$.
Since $V_1$ and $V_2$ are product-one sequences, we can write
\[
  V_1 = V'_1 \bdot g_1 \quad \mbox{ and } \quad V_2 = g_2 \bdot V'_2
\]
with $\sigma (V'_1) g_1 = 1_G = g_2 \sigma (V'_2)$ for some $\sigma (V'_1) \in \pi (V'_1)$ and $\sigma (V'_2) \in \pi (V'_2)$.
Then, $V'_1 := V'_1 \bdot V'_2 \bdot g_0 \in \mathcal B (G)$, and so
\[
  T := U'_1 \bdot U_2 = V'_1 \bdot V_3 \bdot \ldots \bdot V_{\ell} \in \mathcal B (G) \quad \mbox{ with } \,\, |T| \lneq |S| \,.
\]
Let $T = W_1 \bdot \ldots \bdot W_t$ with $t \in \mathbb N$, $W_1, \ldots, W_t \in \mathcal A (G)$ for all $i \in [1,t]$, and $g_0 \mid W_1$.
If we set $W_1 = g_0 \bdot W'_1$, then since $G$ satisfies Property {\bf P}, it follows that $W := g_1 \bdot g_2 \bdot W'_1$ is a product of at most two atoms, implying that $S$ has a factorization of length $t$ or $t+1$.
We infer that $T$ has no factorization of length in $[3, \ell-2]$, and hence
\[
  \min \big( \mathsf L (T) \setminus \{ 2 \}\big) \ge \ell-1 = \min \big( \mathsf L (S) \setminus \{ 2 \} \big) - 1 \,.
\]
Thus, $T$ is the desired sequence.

\smallskip
2. Let $S \in \mathcal B (G)$ with $\mathsf c (S) \ge 4$, and assume that $S$ has minimal length among all product-one sequences having the same catenary degree as $S$.
Then, there exist $z, z' \in \mathsf Z (S)$ with $k = |z| \le |z'| = \ell$ such that $\mathsf d (z,z') = \mathsf c (S)$ and there is no $(\mathsf c (S)-1)$-chain concatenating $z$ and $z'$.
Then, $\ell \ge \mathsf c (S)$, and since $1_G \in \mathcal B (G)$ is a prime element, we may assume that $1_G$ doe not divide $S$.
Thus, we can write
\[
  z = U_1 \bdot \ldots \bdot U_k \in \mathsf Z (S) \quad \mbox{ and } \quad z' = V_1 \bdot \ldots \bdot V_{\ell} \in \mathsf Z (S) \,,
\]
where $U_i, V_j \in \mathcal A (G)$ with $|U_i| \ge 2$ and $|V_j| \ge 2$ for all $i \in [1,k]$ and $j \in [1,\ell]$.
By Lemma~\ref{lem:fac}, we may suppose that
\[
  U_1 = g_1 \bdot g_2 \bdot U \quad \mbox{ with } \, g_1 \mid V_1 \mbox{ and } g_2 \mid V_2 \,,
\]
where $U = g_3 \bdot \ldots \bdot g_{|U_1|}$ and $g_1 g_2 g_3 \cdots g_{|U_1|} = 1_G$.
Let $g_0 = g_1 g_2 \in G$, so that $U'_1 := g_0 \bdot U \in \mathcal A (G)$.
Since $V_1$ and $V_2$ are product-one sequences, we can write
\[
  V_1 = V'_1 \bdot g_1 \quad \mbox{ and } \quad V_2 = g_2 \bdot V'_2
\]
with $\sigma (V'_1) g_1 = 1_G = g_2 \sigma (V'_1)$ for some $\sigma (V'_1) \in \pi (V'_1)$ and $\sigma (V'_2) \in \pi (V'_2)$, and then we obtain a product-one sequence
\[
  V' := V'_1 \bdot V'_2 \bdot g_0 \in \mathcal B (G) \,.
\]
Then, $T = (g_1 \bdot g_2)^{[-1]} \bdot S \bdot g_0 \in \mathcal B (G)$ with $|T| \lneq |S|$.
If $V' = W_1 \bdot \ldots \bdot W_t$, where $t \in \mathbb N$, $W_i \in \mathcal A (G)$ for all $i \in [1,t]$, and $g_0 \mid W_1$, then $T$ has two factorizations
\[
  \overline{z} = U'_1 \bdot U_2 \bdot \ldots \bdot U_k \in \mathsf Z (T) \quad \mbox{ and } \quad \overline{z}' = W_1 \bdot \ldots \bdot W_t \bdot V_3 \bdot \ldots \bdot V_{\ell} \in \mathsf Z (T) \,.
\]

Assume to the contrary that there exists a $(\mathsf c (S)-2)$-chain $\overline{z} = \overline{z}_1, \ldots, \overline{z}_n = \overline{z}'$ in $\mathsf Z (T)$.
For $i \in [1,n]$, write
\[
  \overline{z}_i = Z_{i,1} \bdot \ldots \bdot Z_{i, m_i} \in \mathsf Z (T) \,,
\]
where $m_i \in \mathbb N$, $Z_{i,j} \in \mathcal A (G)$ for $j \in [1,m_i]$, and $g_0 \mid Z_{i,1}$.
For each $i \in [1,n]$, we obtain a product-one sequence
\[
  Z'_{i,1} := (Z_{i,1} \bdot g^{[-1]}_0) \bdot g_1 \bdot g_2 \in \mathcal B (G) \,,
\]
and we take a factorization $\overline{y}_{i,1} \in \mathsf Z (Z'_{i,1})$.
Since $G$ satisfies Property {\bf P}, it follows that $\overline{y}_{i,1}$ is a product of at most 2 atoms of $\mathcal B (G)$.
Then, for each $i \in [1,n]$, we have a factorization
\[
  z_{2i} := \overline{y}_{i,1} \bdot Z_{i,2} \bdot \ldots \bdot Z_{i,m_i} \in \mathsf Z (S) \,.
\]
Now, we take $i \in [1,n-1]$, and we put $z_{2i+1}$ as follows:
\begin{itemize}
\item[(a)] $z_{2i+1} := \overline{y}_{i,1} \bdot \overline{z}_{i+1} \bdot Z^{[-1]}_{i+1,\zeta} \in \mathsf Z (S)$ if $Z_{i+1,\zeta} = Z_{i,1}$ for some $\zeta \in [1, m_{i+1}]$,

\smallskip
\item[(b)] $z_{2i+1} := \overline{y}_{i+1,1} \bdot \overline{z}_i \bdot Z^{[-1]}_{i,\zeta} \in \mathsf Z (S)$ if $Z_{i, \zeta} = Z_{i+1,1}$ for some $\zeta \in [1,m_i]$, or

\smallskip
\item[(c)] $z_{2i+1} := z_{2i}$ if $Z_{i+1,\zeta} \neq Z_{i,1}$ for all $\zeta \in [1,m_{i+1}]$ and $Z_{i,\xi} \neq Z_{i+1,1}$ for all $\xi \in [1,m_i]$.
\end{itemize}
According to the construction, if (a) occurs, then $\mathsf d (z_{2i}, z_{2i+1}) = \mathsf d (\overline{z}_i, \overline{z}_{i+1}) \le \mathsf c (S) - 2 \le \mathsf c (S) - 1$ and $\mathsf d (z_{2i+1}, z_{2i+2}) \le 1 + \max \big\{ |\overline{y}_{i,1}|, |\overline{y}_{i+1,1}| \big\} \le 3 \le \mathsf c (S) - 1$.

If (b) occurs, then $\mathsf d (z_{2i}, z_{2i+1}) \le 3 \le \mathsf c (S) - 1$ and $\mathsf d (z_{2i+1}, z_{2i+2}) = \mathsf d (\overline{z}_i , \overline{z}_{i+1}) \le \mathsf c (S) - 1$.

If (c) occurs, then $\mathsf d (z_{2i}, z_{2i+2}) \le 2 + \mathsf d \left( \overline{z}_i \bdot Z^{[-1]}_{i,1}, \, \overline{z}_{i+1} \bdot Z^{[-1]}_{i+1,1} \right) \le 2 + (\mathsf c (S) - 3 ) = \mathsf c (S) - 1$.

Since $G$ satisfies Property {\bf P}, it follows that $\mathsf d (z, z_2) \le 2 \le \mathsf c (S) - 1$, which ensures that there exists a $(\mathsf c (S) - 1)$- chain of factorizations of $S$ from $z$ to
\[
  z_{2n} := \overline{y}_{n, 1} \bdot W_2 \bdot \ldots \bdot W_t \bdot V_3 \bdot \ldots \bdot V_{\ell} \in \mathsf Z (S) \,,
\]
whence $V_1 \bdot V_2 = \overline{y}_{n,1} \bdot W_2 \bdot \ldots \bdot W_t$.
Since $\ell \ge \mathsf c (S) \ge 4$, it follows that $| V_1 \bdot V_2 | \lneq |S|$, so that $\mathsf c (V_1 \bdot V_2) \le \mathsf c (S) - 1$ by the minimality of $|S|$.
Hence, there exists a $(\mathsf c (S) - 1)$-chain of factorizations of $S$ concatenating $z_{2n}$ and $z'$, and therefore there must exists a $(\mathsf c (S) - 1)$-chain of factorizations of $S$ concatenating $z$ and $z'$, a contradiction.
It follows that there is no $(\mathsf c (S) - 2)$-chain of factorizations concatenating $\overline{z}$ and $\overline{z}'$, which ensures that $\mathsf c (T) \ge \mathsf c (S) - 1$.
Thus, $T$ is the desired sequence.
\end{proof}

\smallskip
The following theorem shows that Property {\bf P} guarantees very structured arithmetical invariants for the monoid $\mathcal B (G)$.
As its consequence, we conclude that all groups discussed in Theorem~\ref{thm:P} possess well-behaved algebraic and arithmetic structures.
However, these arithmetic invariants need not be intervals in general: indeed, for any finite subset $A \subseteq \mathbb N$, there is a Krull monoid $H$ with $\Delta (H) = A$ (see \cite[Theorem 1.1]{Ge-Sc17}) and with $\mathsf {Ca} (H) = A \setminus \{ 1 \}$ and $\daleth^{*} (H) = A \setminus \{ 1, 2 \}$ (see \cite[Proposition 3.2]{Fa-Ge19}).
We also refer the reader to \cite{On-Pe18} for a realization result on numerical monoids.

\smallskip
\begin{theorem} \label{thm:Int}~
Let $G$ be a finite group with $|G| \ge 3$.
\begin{enumerate}
\item If $G$ satisfies Property {\bf P}, then
	\begin{enumerate}
	\smallskip
	\item[(i)] $\Delta (G)$ is a finite interval with $\min \Delta (G) = 1$,
	
	\smallskip
	\item[(ii)] $\daleth^{*} (G)$ is a finite interval with $\min \daleth^{*} (G) = 3$, and
	
	\smallskip
	\item[(iii)] $\mathsf {Ca} (G)$ is a finite interval with $\min \mathsf {Ca} (G) = \left\{ \begin{array}{ll}
	                                                        3 & \hbox{if \, $\mathsf D (G) = 3$} \\
	                                                        2 & \hbox{if \, $\mathsf D (G) \ge 4$} \,.
	                                                        \end{array} \right.$
	\end{enumerate}
	
\medskip
\item If $G$ is either abelian, or that $|G'| = 2$ and the Sylow-$2$ subgroup is minimal non-abelian, then the monoid $\mathcal B (G)$ is seminormal whose $\Delta (G)$, $\daleth^{*} (G)$, and $\mathsf {Ca} (G)$ are all finite intervals.
\end{enumerate}
\end{theorem}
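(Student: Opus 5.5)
Part (i) is the already known statement \cite[Theorem 5.5]{Oh20} for groups with Property {\bf P}, and I will simply quote it. For (ii) and (iii) the plan is the same: get the finiteness and the minimum from general facts, and get the interval property by a descent on $|S|$ using Lemma~\ref{lem:Ca}.

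\emph{Finiteness.} Since $\mathcal B (G)$ is a finitely generated C-monoid, $\mathsf c (G) < \infty$ and $\Delta (G)$ is finite. Then (\ref{eq:ine}) bounds $\daleth^{*} (G)$ and $\mathsf {Ca} (G)$.

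\emph{Minimum of $\daleth^{*} (G)$.} Every element of $\daleth^{*} (G)$ that is nonzero is at least $3$. To see that $3$ occurs, take $g$ with $\ord (g) \ge 3$; then $g^{[\ord (g)]}\bdot (g^{-1})^{[\ord (g)]}$ contains a factorization of length $3$ only when $\ord (g) = 3$. A cleaner route is the one in the proof of Theorem~\ref{thm:class}.2. If $\ord (g) = 3$, the element $g^{[3]} \bdot (g^{-1})^{[3]}$ has $\mathsf L = \{2,3\}$. If $G$ has two distinct involutions $g,h$, then $\big(g\bdot h\bdot(gh)\big)^{[2]}$ gives $\{2,3\}$. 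If $\ord (g) = n \ge 4$, I would split $g^{[n]} \bdot (g^{-1})^{[n]}$ differently: take $U = g^{[n-2]} \bdot g^{2}$ and $V = (g^{-1})^{[n-2]} \bdot g^{-2}$, so that $U \bdot V = (g \bdot g^{-1})^{[n-2]} \bdot (g^{2} \bdot g^{-2})$. Better still, use $U = g^{[2]} \bdot g^{-2}$ and $V = (g^{-1})^{[2]} \bdot g^{2}$; then $U \bdot V = (g\bdot g^{-1})^{[2]}\bdot (g^{2}\bdot g^{-2})$, which has length $3$. The case $|G| \ge 3$ with no element of order $\ge 3$ forces at least two involutions. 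Together these give $3 \in \daleth^{*} (G)$.

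\emph{Minimum of $\mathsf {Ca} (G)$.} If $\mathsf D (G) \ge 4$, I would produce an element with catenary degree $2$, namely an element with two factorizations of equal length differing in two atoms. For example, take an atom $A$ of length $\ge 3$ containing terms $g_1, g_2$ and compare $A \bdot A^{-1}$ with other splittings, adapting the abelian argument of \cite[Lemma 4.6]{Ge-Zh19a}; this is an explicit construction. If $\mathsf D (G) = 3$, every atom has length $\le 3$. Then any two distinct factorizations of the same length cannot differ in exactly two atoms (one checks that $U_1\bdot U_2 = V_1 \bdot V_2$ with short atoms forces equality), so $2 \notin \mathsf{Ca}(G)$; and $3 \in \mathsf{Ca}(G)$ by the element $(g\bdot h \bdot gh)^{[2]}$ or $g^{[3]}\bdot (g^{-1})^{[3]}$.

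\emph{Interval property (the key step).} Suppose $m = \max \daleth^{*} (G) \ge 4$, realised by $S$ of minimal length. Lemma~\ref{lem:Ca}.1 gives $T$ with $|T| < |S|$ and $\min(\mathsf L (T)\setminus\{2\}) \ge m-1$. The descent must be arranged so that the value drops by at most one at each step and eventually reaches $3$. For this I would argue on each $k \in [4, m]$: take $S$ of \emph{minimal length} with value $\ge k$. Its $T$ has value $\ge k-1$ and is shorter, and minimality of $S$ forces $T$'s value to be $< k$, hence exactly $k-1$. This yields every value in $[3,m]$. The identical argument using Lemma~\ref{lem:Ca}.2 shows that $\mathsf{Ca}(G) \supseteq [4, \mathsf c (G)]$, and combining this with the minimum computation closes the gap at $3$: $3 \in \mathsf{Ca}(G)$ whenever $\mathsf c(G)\ge 4$, by descent from $4$. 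The main obstacle I expect is precisely this bookkeeping, namely that "value $\ge k-1$" together with minimality yields exactly $k-1$, rather than the lemma itself.

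\emph{Part 2.} Theorem~\ref{thm:P} gives Property {\bf P}, so part 1 applies. Seminormality follows from Lemma~\ref{lem:structure}.3, since $|G'| \le 2$.
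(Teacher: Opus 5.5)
\paragraph{Overall.} Most of your plan coincides with the paper's proof:
\begin{itemize}
\item (i) is quoted from \cite[Theorem 5.5]{Oh20};
\item part~2 follows from Theorem~\ref{thm:P} and Lemma~\ref{lem:structure}.3;
\item the interval property comes from the descent through Lemma~\ref{lem:Ca}. Your choice of $S$ of minimal length among sequences with value $\ge k$ is exactly the bookkeeping that makes this descent work.
\end{itemize}
Your witness $\big(g^{[2]}\bdot g^{-2}\big)\bdot\big((g^{-1})^{[2]}\bdot g^{2}\big)$ for $3\in\daleth^{*}(G)$ when $\ord(g)\ge 4$ is also correct. Note that the paper's element $(g\bdot g^{-1})^{[\ord(g)]}$ has set of lengths $\{2,\ord(g)\}$, so it only gives $3$ when $\ord(g)=3$.

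\paragraph{The gap.} The unsupported step is the assertion $2\in\mathsf{Ca}(G)$ whenever $\mathsf D(G)\ge 4$, which is half of the formula for $\min\mathsf{Ca}(G)$ in (iii). You do not construct anything, and the construction you hint at cannot work. If $\mathsf c(a)=2$, then all factorizations of $a$ have the same length, because two distinct factorizations at distance at most $2$ must differ by exchanging two atoms for two atoms. But for an atom $A$ with $|A|\ge 3$, the element $A\bdot A^{-1}$ has the factorization $A\bdot A^{-1}$ of length $2$. It also has the factorization into the $|A|$ atoms $g\bdot g^{-1}$, one for each term $g$ of $A$. Hence $\mathsf c(A\bdot A^{-1})\ge 3$. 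Invoking \cite[Lemma 4.6]{Ge-Zh19a} is legitimate only for abelian $G$, which is how the paper handles that case; for non-abelian $G$ it gives nothing.

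\paragraph{What is needed.} You need an explicit element with at least two factorizations, all of length $2$; its catenary degree is then exactly $2$. The paper does this case by case for non-abelian $G$, after recalling $\mathsf D(G)\ge 6$ from \cite[Lemma 4.6]{Oh19}.
\begin{itemize}
\item If some $g$ has order $n\ge 4$, then $g^{[n]}\bdot g^{n-2}\bdot g^{2}$ factors only as $\big(g^{[n]}\big)\bdot\big(g^{n-2}\bdot g^{2}\big)$ and as $\big(g^{[n-2]}\bdot g^{2}\big)\bdot\big(g^{[2]}\bdot g^{n-2}\big)$.
\item If every non-identity element has order $2$ or $3$, the paper uses one of two configurations:
\begin{itemize}
\item a subgroup isomorphic to $D_6$, for which $\mathsf{Ca}(D_6)=[2,3]$ is known \cite[Theorem 5.1]{Ge-Gr-Oh-Zh22};
\item two elements $g,h$ of order $3$ with $h\notin\langle g\rangle$ and $\ord(gh)=3$. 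Then the sequence $(gh)^{[4]}\bdot g^{[2]}\bdot h^{[2]}$ has the two factorizations $\big((gh)^{[3]}\big)\bdot\big((gh)\bdot g^{[2]}\bdot h^{[2]}\big)$ and $\big((gh)^{[2]}\bdot g\bdot h\big)^{[2]}$. No atom of length $2$ divides it, so all its factorizations have length $2$.
\end{itemize}
\end{itemize}

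\paragraph{Two smaller points.}
\begin{itemize}
\item In the case $\mathsf D(G)=3$, your ``one checks'' becomes a finite verification once you record that $\mathsf D(G)=3$ forces $G\cong C_3$ or $C_2\times C_2$.
\item $\big(g\bdot h\bdot(gh)\big)^{[2]}$ is a valid witness only when $gh=hg$. This does hold in the elementary abelian case where you use it.
\end{itemize}
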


\begin{proof}
1. Suppose that $G$ satisfies Property~{\bf P}.

\smallskip
(i) See \cite[Theorem 5.5]{Oh20}.

\smallskip
(ii) Since $\daleth (G)$ is finite, we take $S_0 \in \mathcal B (G)$ with minimal length and $2 \in \mathsf L (S_0)$ such that $\daleth (G) = \min \big( \mathsf L (S_0) \setminus \{ 2 \} \big)$.
By Lemma~\ref{lem:Ca}.1, there exists $S_1 \in \mathcal B (G)$ with minimal length and $2 \in \mathsf L (S_0)$ such that $|S_1| \lneq |S_0|$ and $\min \big( \mathsf L (S_1) \setminus \{ 2 \} \big) \ge \min \big( \mathsf L (S_0) \setminus \{ 2 \} \big) - 1$.
By the minimality of $S_0$, $|S_1| \lneq |S_0|$ implies that $\min \big( \mathsf L (S_1) \setminus \{ 2 \} \big) \lneq \daleth (G) = \min \big( \mathsf L (S_0) \setminus \{ 2 \} \big)$, and thus
\[
  \daleth (G) - 1 = \min \big( \mathsf L (S_0) \setminus \{ 2 \} \big) - 1 = \min \big( \mathsf L (S_1) \setminus \{ 2 \} \big) \in \daleth^{*} (G) \,.
\]
Again by Lemma~\ref{lem:Ca}.1, there exists $S_2 \in \mathcal B (G)$ with minimal length and $2 \in \mathsf L (S_2)$ such that $|S_2| \lneq |S_1|$ and $\min \big( \mathsf L (S_2) \setminus \{ 2 \} \big) \ge \min \big( \mathsf L (S_1) \setminus \{ 2 \} \big) - 1$.
Then, it follows by the minimality of $|S_1|$ that $\min \big( \mathsf L (S_2) \setminus \{ 2 \} \big) \lneq \min \big( \mathsf L (S_1) \setminus \{ 2 \} \big)$, so that
\[
  \daleth (G) - 2 = \min \big( \mathsf L (S_0) \setminus \{ 2 \} \big) - 2 = \min \big( \mathsf L (S_1) \setminus \{ 2 \} \big) - 1 = \min \big( \mathsf L (S_2) \setminus \{ 2 \} \big) \in \daleth^{*} (G) \,.
\]
Continuing this process, there exists $S_n \in \mathcal B (G)$ with $2 \in \mathsf L (S_n)$ such that
\[
  \min \daleth^{*} (G) = \daleth (G) - n = \min \big( \mathsf L (S_n) \setminus \{ 2 \} \big) \in \daleth^{*} (G) \,.
\]
Now, if there exists $g \in G$ with $\ord (g) \ge 3$, then $3 = \min \big( \mathsf L \big( (g \bdot g^{-1})^{[\ord (g)]} \big) \setminus \{ 2 \} \big) \in \daleth^{*} (G)$.
Suppose that every non-identity element in $G$ has order 2.
It follows that $G$ must be abelian, and if $g_1, g_2 \in G \setminus \{ 1_G \}$ are distinct two elements, then $3 = \min \big( \mathsf L \big( (g_1g_2)^{[2]} \bdot g^{[2]}_1 \bdot g^{[2]}_2 \big) \setminus \{ 2 \} \big) \in \daleth^{*} (G)$.
Thus, we infer that $\min \daleth^{*} (G) = 3$.

\smallskip
(iii) Since the assertion has been done in \cite[Lemma 4.6]{Ge-Zh19a} for abelian groups, we here assume that $G$ is a non-abelian group satisfying Property {\bf P}.
Then, by Proposition~\ref{pro:non-abel}, $\mathsf c (G) \ge 4$.
Let $S_0 \in \mathcal B (G)$ with minimal length such that $\mathsf c (S_0) = \mathsf c (G)$.
Then, by Lemma~\ref{lem:Ca}.2, there exists $S_1 \in \mathcal B (G)$ with minimal length such that $|S_1| \lneq |S_0|$ and $\mathsf c (S_1) \ge \mathsf c (S_0) - 1$.
By the minimality of $S_0$, $|S_1| \lneq  |S_0|$ implies that $\mathsf c (S_1) \lneq \mathsf c (G)$, and hence $\mathsf c (G) - 1 = \mathsf c (S_1) \in \mathsf {Ca} (G)$.

Applying Lemma~\ref{lem:Ca}.2 again, we infer that there exists $S_2 \in \mathcal B (G)$ with minimal length such that $\mathsf c (S_2) \ge \mathsf c (S_1) - 1$.
By the minimality of $S_1$, $|S_2| \lneq |S_1|$ implies that $\mathsf c (S_2) \lneq \mathsf c (S_1) = \mathsf c (G) - 1$.
Thus, $\mathsf c (G) - 2 = \mathsf c (S_1) - 1 = \mathsf c (S_2) \in \mathsf {Ca} (G)$.
Repeating this process, there must exist $S_n \in \mathcal B (G)$ such that
\[
  \min \mathsf {Ca} (G) = \mathsf c (G) - n = \mathsf c (S_n) \in \mathsf {Ca} (G) \,.
\]
Hence, $\mathsf {Ca} (G) = [ \min \mathsf {Ca} (G), \mathsf c (G)]$ is an interval.
Now, we consider the minimum of $\mathsf {Ca} (G)$.
Since $G$ is non-abelian, it follows by \cite[Lemma 4.6]{Oh19} or \cite[Theorem 1.1]{Oh26} that $\mathsf D (G) \ge 6$, and so we only need to show that $2 \in \mathsf {Ca} (G)$.

If $G$ has an element $g$ with $\ord (g) = n \ge 4$, then $2 = \mathsf c \big(g^{[n]} \bdot (g^{n-2} \bdot g^{2}) \big) \in \mathsf {Ca} (G)$.
Thus, we may suppose that
\begin{equation} \label{eq:23}~
  \ord (g) \in [2,3] \quad \mbox{ for all } \,\, g \in G \setminus \{ 1_G \} \,.
\end{equation}
Since $G$ is non-abelian, there exists at least one $g \in G$ with $\ord (g) = 3$.
If $G$ has an element $h$ with $\ord (h) = 2$, then we must have that $gh \neq hg$, and so $G$ has a subgroup $\langle g, h \rangle \cong D_6$.
It follows by \cite[Theorem 5.1]{Ge-Gr-Oh-Zh22} that $\mathsf {Ca} (D_6) = [2,3]$, so that $2 \in \mathsf {Ca} (G)$.
Assume that there exists $h \in G \setminus \langle g \rangle$ with $\ord (h) = 3$.
Then, $\ord (gh) \in [2,3]$ by (\ref{eq:23}).
If either $gh = hg$, or that $gh \neq hg$ and $\ord (gh) = 3$, then $(gh) \bdot g^{[2]} \bdot h^{[2]} \in \mathcal A (G)$ and $2 = \mathsf c \big( (gh)^{[3]} \bdot ((gh) \bdot g^{[2]} \bdot h^{[2]}) \big) \in \mathsf {Ca} (G)$.
If $gh \neq hg$ and $\ord (gh) = 2$, then we have that $(gh)g \neq g(gh)$.
Thus, $D_6 \cong \langle g, gh \rangle$ is a subgroup of $G$, and hence $2 \in \mathsf {Ca} (G)$, as in the previous case when $\ord (h) = 2$.

\smallskip
2. The assertion follows by Lemma~\ref{lem:structure}.3, Theorem~\ref{thm:P}, and Item 1.
\end{proof}

\smallskip
Although a finite group $G$ does not satisfy Property {\bf P}, and the monoid $\mathcal B (G)$ is not seminormal, there exists a class of finite groups for which the conclusions of Theorem~\ref{thm:Int} still hold, as follows.

\smallskip
\begin{remark} \label{rmk:dihe}~
Let $G = D_{2n}$ be the dihedral group of order $2n$ with $n \ge 3$ odd.
By Example~\ref{ex:P}.3, $G$ does not satisfy Property {\bf P}.
However, $\Delta (G) = [1, 2n-2]$ and $\mathsf {Ca} (G) = [2,2n]$ are finite intervals (see \cite[Theorem 5.1]{Ge-Gr-Oh-Zh22}).
Moreover, along to the same line, one can show that $\daleth^{*} (G) = [3,2n]$ is also a finite interval with $\mathsf L (U \bdot U_k) = \{ 2, 2n-k \}$ for $k \in [0,n]$, where $\alpha, \tau \in G$ are generators, and $U = \tau^{[n]} \bdot (\alpha\tau)^{[n]}$ and $U_k = \alpha^{[k]} \bdot \tau^{[n-k]} \bdot (\alpha\tau)^{[n-k]}$ are minimal product-one sequences.
\end{remark}


\medskip
\section{The quaternion group} \label{4}
\medskip

In this section, we fully describe the sets of distances and catenary degrees for the quaternion group $Q_8$.
By Proposition~\ref{pro:min}, this group belongs to the class of minimal non-abelian groups whose commutator subgroup has two elements.
Moreover, by Theorem~\ref{thm:Int}, this group possess well-behaved algebraic and arithmetic structures.
It is worthwhile to mention that, for every finite abelian group $G$, $\mathsf c (G) \le \mathsf D (G) = \boldsymbol{\beta} (G)$ is well-known result (see \cite[Proposition 6.1.3]{Ge-Gr-Zh26} for the first bound, and \cite{Sc91} for the second bound).
We have that $\boldsymbol{\beta_{\textnormal{sep}}} (G) \le \boldsymbol{\beta} (G)$ by the definition, and $\boldsymbol{\beta_{\textnormal{sep}}} (Q_8) = \boldsymbol{\beta} (Q_8) = 6$ (see \cite{Cz-Do-Sz18,Do-Sc25}).
Then, Theorem~\ref{thm:Ca} shows that $\mathsf c (Q_8) = 4 \le 6 = \boldsymbol{\beta_{\textnormal{sep}}} (Q_8) = \boldsymbol{\beta} (Q_8)$.
This naturally leads to the question of whether $\mathsf c (G) \le \boldsymbol{\beta} (G)$ (or $\boldsymbol{\beta_{\textnormal{sep}}} (G))$ for every finite group $G$.

Now let $Q_8 = \{ \pm{E}, \pm{I}, \pm{J}, \pm{K} \}$ be the quaternion group of order 8 with the relation $I^{2} = J^{2} = K^{2} = IJK = -E$ and the identity $1_{Q_8} = E$.
Note that any two elements $x, y \in Q_8 \setminus \{ 1_{Q_8}, -E \}$ with $x \neq \pm{y}$ generate $Q_8$, so that $Q_8 = \langle x, y \mid x^{4} = 1_{Q_8}, y^{2} = x^{2}, \text{ and } yx = x^{-1}y \rangle$.
Then, it follows by \cite[Theorem 1.1]{Ge-Gr13} that
\[
  \mathsf d (Q_8) = 4 \quad \mbox{ and } \quad \mathsf D (Q_8) = 6 \,,
\]
and we start with a characterization of minimal product-one sequences of all lengths except 2, as follows.

\smallskip
\begin{theorem} \label{thm:Davenport5}~
Let $S \in \mathcal F (Q_8)$ be a sequence.
\begin{enumerate}
\item $S$ is a minimal product-one sequence of length $|S| = 6$ if and only if there exist $\alpha, \tau \in Q_8$ such that $Q_8 = \langle \alpha, \tau \mid \alpha^{4} = 1_{Q_8}, \tau^{2} = \alpha^{2}, \mbox{ and } \tau\alpha = \alpha^{-1}\tau \rangle$ and
\begin{equation} \label{eq:Da6}~
  S = \alpha^{[4]} \bdot \tau^{[2]} \,.
\end{equation}

\smallskip
\item $S$ is a minimal product-one sequence of length $|S| = 5$ if and only if there exist $\alpha, \tau \in Q_8$ such that $Q_8 = \langle \alpha, \tau \mid \alpha^{4} = 1_{Q_8}, \tau^{2} = \alpha^{2}, \mbox{ and } \tau\alpha = \alpha^{-1}\tau \rangle$ and $S$ has one of the following forms:
\begin{equation} \label{eq:Da5}~
  \alpha^{[3]} \bdot \tau \bdot \alpha\tau \,, \quad \alpha^{[3]} \bdot \tau \bdot \alpha^{3}\tau \,, \quad \alpha^{2} \bdot \alpha^{[2]} \bdot \tau^{[2]} \,, \quad \mbox{ or } \quad \alpha^{2} \bdot \alpha \bdot \alpha^{-1} \bdot \tau \bdot \tau^{-1} \,.
\end{equation}

\smallskip
\item $S$ is a minimal product-one sequence of length $|S| = 4$ if and only if there exist $\alpha, \tau \in Q_8$ such that $Q_8 = \langle \alpha, \tau \mid \alpha^{4} = 1_{Q_8}, \tau^{2} = \alpha^{2}, \text{ and } \tau \alpha = \alpha^{-1} \tau \rangle$ and $S$ has one of the following forms:
\begin{equation} \label{eq:Da4}~
  \alpha^{[4]} \,, \quad \alpha^{[2]} \bdot \tau^{[2]} \,, \quad \alpha^{[2]} \bdot \tau \bdot \tau^{-1} \,, \quad \alpha^{2} \bdot \alpha \bdot \tau \bdot \alpha\tau \,, \quad \mbox{ or } \quad \alpha^{2} \bdot \alpha \bdot \tau \bdot \alpha^{3}\tau \,.
\end{equation}

\smallskip
\item $S$ is a minimal product-one sequence of length $|S| = 3$ if and only if there exist $\alpha, \tau \in Q_8$ such that $Q_8 = \langle \alpha, \tau \mid \alpha^{4} = 1_{Q_8}, \tau^{2} = \alpha^{2}, \text{ and } \tau \alpha = \alpha^{-1} \tau \rangle$ and $S$ has one of the following forms:
\begin{equation} \label{eq:Da3}~
  \alpha \bdot \tau \bdot \alpha\tau \,, \quad \alpha \bdot \tau \bdot \alpha^{3}\tau \,, \quad \mbox{ or } \quad \alpha^{2} \bdot \alpha^{[2]} \,.
\end{equation}
\end{enumerate}
\end{theorem}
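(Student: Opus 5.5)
The proof has two directions for each length. The "if" direction is routine: for each listed form, exhibit a product-one ordering using $\tau\alpha=\alpha^{-1}\tau$, $\alpha^4=1$ and $\tau^2=\alpha^2$, then rule out every proper product-one subsequence. For example, $\alpha^{[4]}\bdot\tau^{[2]}$ has the ordering $\alpha\tau\alpha\alpha\tau\alpha = 1$. Its proper subsequences are $\alpha^{[i]}\bdot\tau^{[j]}$. These have products in the coset $\alpha^i\tau^j G'$, with $G'=\{\pm E\}$, so only $(i,j)\in\{(2,0),(4,0),(0,2),(2,2)\}$ survive the parity test. Each of these is checked directly, and the complement of any candidate has odd $\alpha$- or $\tau$-exponent, so it is not product-one. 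The same mechanism applies to lengths $3,4,5$: the image in $Q_8/G'\cong C_2\times C_2$ forces zero-sum in $C_2^2$, and within the fiber one checks the product $\pm E$ by hand. I will organize these checks by noting that $\pi(T)$ equals $\{\sigma\}$ when the terms of $T$ pairwise commute and equals $\{\pm\sigma\}$ otherwise, since $|G'|=2$.

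For the "only if" direction, let $S\in\mathcal A(Q_8)$ with $|S|\ge 3$ and pass to $\varphi\colon Q_8\to Q_8/\langle -E\rangle\cong C_2\times C_2$. Then $\varphi(S)$ is a zero-sum sequence over $C_2^2$. Two facts constrain $S$ heavily.
\begin{itemize}
\item[(a)] $-E$ occurs at most once, and $E\nmid S$. Indeed $E$ is prime, and $(-E)^{[2]}$ is product-one.
\item[(b)] $S$ contains no subsequence $x\bdot x^{-1}$ and no $x^{[2]}\bdot y^{[2]}$ with $x\ne\pm y$, both of order $4$, unless $S$ equals that subsequence. These are product-one of lengths $2$ and $4$.
\end{itemize}
More generally, any zero-sum subsequence $T$ of $\varphi(S)$ with $\varphi(S)\bdot\varphi(T)^{[-1]}$ also zero-sum must have $-E\in\pi(T)$ or $-E\in\pi(S\bdot T^{[-1]})$ strictly forced. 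Since $\pi(T)=\{\pm\sigma(T)\}$ as soon as $T$ has two non-commuting terms, this forces every such $T$ to consist of pairwise commuting terms, that is, to lie in a cyclic subgroup $\langle x\rangle$ together with $-E$.

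Then I split by the multiset of $C_2^2$-classes. Let $a,b,c$ be the three classes $\{\pm I\},\{\pm J\},\{\pm K\}$, and let $n_a,n_b,n_c,n_0$ be their multiplicities. Zero-sum means $n_a\equiv n_b\equiv n_c \pmod 2$.
\begin{itemize}
\item[] \emph{Case 1: all three odd.} Pull out one term from each of $a,b,c$. This triple is zero-sum in $C_2^2$, and by the commuting criterion any two terms of it generate $Q_8$, so its $\pi$ is $\{\pm\}$. Hence this triple is itself product-one, and it must be all of $S$ up to the $-E$ term and up to pairs from one class. Those pairs must be $x^{[2]}$ for the same $x$, by (b). This yields the forms $\alpha\bdot\tau\bdot\alpha^{\pm1}\tau$, $\alpha^{[3]}\bdot\tau\bdot\alpha^{\pm1}\tau$, and the length-$4$ forms with $\alpha^2$.
\item[] \emph{Case 2: all even.} Only one or two classes can occur, since three nonempty even classes give a decomposable $S$. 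With one class we get $\alpha^{[4]}$ or $\alpha^2\bdot\alpha^{[2]}$. With two classes we get $\alpha^{[2k]}\bdot\tau^{[2l]}$-type sequences, possibly with inverses and $-E$. Using (b) and the observation that $\alpha^{[4]}$ and $\alpha^{[2]}\bdot\tau^{[2]}$ are atoms, these reduce to $\alpha^{[4]}\bdot\tau^{[2]}$, $\alpha^2\bdot\alpha^{[2]}\bdot\tau^{[2]}$, $\alpha^2\bdot\alpha\bdot\alpha^{-1}\bdot\tau\bdot\tau^{-1}$, $\alpha^{[2]}\bdot\tau^{[2]}$, and $\alpha^{[2]}\bdot\tau\bdot\tau^{-1}$.
\end{itemize}
Finally $|S|\le \mathsf D(Q_8)=6$ bounds the multiplicities.

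The main obstacle is the bookkeeping in Case 2 and in the $\alpha^{[3]}$ subcase of Case 1. There one must show that, for example, $\alpha^{[2]}\bdot\alpha^{-1}\bdot\ldots$ or $\alpha^{[4]}\bdot\tau\bdot\tau^{-1}$ always split. I would handle this by always exhibiting an explicit product-one proper subsequence whose complement lies in a single cyclic subgroup with even-sum image. Its complement is then product-one precisely when its abelian product is $E$, and that product is computable directly. The choice of generators $\alpha,\tau$ absorbs the symmetry of $Q_8$'s automorphism group, which acts transitively on ordered pairs of generators, so it suffices to fix representatives.
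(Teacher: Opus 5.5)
\textbf{Gap: you treat atoms of $\mathcal B(Q_8)$ as if they were atoms of a Krull monoid.} Throughout, you assume an atom has no proper nontrivial product-one subsequence. That is the abelian characterization, and it is false here. An atom only has to admit no decomposition $S=T\bdot T'$ with \emph{both} $T$ and $T'$ nontrivial and product-one, and the complement of a product-one subsequence need not be product-one.

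\begin{itemize}
\item[(1)] Your fact (b) is false, and the statement you are proving refutes it. The sequences $\alpha^{[2]}\bdot\tau\bdot\tau^{-1}$ and $\alpha^{2}\bdot\alpha\bdot\alpha^{-1}\bdot\tau\bdot\tau^{-1}$ contain $\tau\bdot\tau^{-1}$. The sequences $\alpha^{[4]}\bdot\tau^{[2]}$ and $\alpha^{2}\bdot\alpha^{[2]}\bdot\tau^{[2]}$ contain $\alpha^{[2]}\bdot\tau^{[2]}$. All four are atoms, because in each case the complement has the single product $-E$.
\item[(2)] Your ``more generally'' step has the same flaw. From $\pi(T)=\{-E\}$ or $\pi(S\bdot T^{[-1]})=\{-E\}$ you may only conclude that \emph{one} of the two parts consists of pairwise commuting terms, not $T$ itself. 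The decomposition $\alpha^{[4]}\bdot\tau^{[2]}=(\alpha^{[2]}\bdot\tau^{[2]})\bdot\alpha^{[2]}$ shows this.
\item[(3)] As written, Case 2 invokes (b) and then outputs exactly the sequences (b) forbids. Case 1 uses (b) to pin down the extra pair. So the ``only if'' direction is not established.
\item[(4)] The ``if'' direction repeats the error. For $\alpha^{[4]}\bdot\tau^{[2]}$ you cannot ``rule out every proper product-one subsequence'', since $\alpha^{[4]}$ and $\alpha^{[2]}\bdot\tau^{[2]}$ are such subsequences. Your claim that complements of the surviving candidates have an odd exponent is also false: $(2,0)$ and $(2,2)$ are complementary, as are $(4,0)$ and $(0,2)$. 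What actually excludes each split is that one part is $\alpha^{[2]}$ or $\tau^{[2]}$, whose only product is $-E$.
\item[(5)] Fact (a) is true, but only because $-E$ is central, not merely because $(-E)^{[2]}$ is product-one.
\end{itemize}

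\textbf{The correct test.} It follows from your own commuting observation. Since $|Q_8'|=2$, $\pi(T)$ is a singleton when the terms of $T$ pairwise commute, and the whole $Q_8'$-coset otherwise. Hence a product-one $S$ with $E\nmid S$ is an atom if and only if the following holds: for every decomposition $S=T\bdot T'$ into nontrivial parts with $\varphi(T)$ zero-sum, one of $T,T'$ lies in a cyclic subgroup of order $4$ and has product $-E$. Replace (b) by this criterion and redo the enumeration.
\begin{itemize}
\item[] \emph{Case 1.} The triple is product-one with non-commuting terms, so its complement must be commuting with product $-E$. Since $|S|\le 6$, the complement is empty, $-E$, $x^{[2]}$, or $-E\bdot x\bdot x^{-1}$. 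The last option is excluded, and so is $x^{[2]}$ when the triple contains $x^{-1}$: in both cases $x\bdot x^{-1}$ splits off with a product-one remainder.
\item[] \emph{Case 2.} Check the shapes $(n_a,n_b,n_0)\in\{(2,2,0),(2,2,1),(4,2,0)\}$ and the one-class shapes directly against the criterion.
\end{itemize}
Repaired this way, your route through $Q_8/Q_8'\cong C_2\times C_2$ is a valid alternative to the paper's proof, and more symmetric. The paper instead fixes a cyclic subgroup $\langle\alpha\rangle$ and uses that the number of terms outside $\langle\alpha\rangle$ is even. It then reduces the product-one equation to an atom over $\langle\alpha\rangle\cong C_4$ and discards non-atoms by explicit factorizations.
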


\begin{proof}
We fix generators $\alpha, \tau \in Q_8$ such that $Q_8 = \langle \alpha, \tau \mid \alpha^{4} = 1_{Q_8}, \tau^{2} = \alpha^{2}, \text{ and } \alpha \tau = \alpha^{-1}\tau \rangle$.
Then, we can write $Q_8 = \langle \alpha \rangle \cup G_0$, where $G_0 = \{ \alpha^{i}\tau \mid i \in [0,3] \}$.

\smallskip
1. See \cite[Theorem 4.3]{Oh-Zh20a}.

\smallskip
2. Let $S \in \mathcal A (Q_8)$ with $|S| = 5$. Since $\langle \alpha \rangle$ is a cyclic group of order 4, $|S| = 5 \gneq \mathsf D \big( \langle \alpha \rangle \big) = 4$, implying that $|S_{G_0}| \ge 1$.
Since $\ord (\tau) = 4$ and $\tau^{2} = \alpha^{2}$, we infer that $|S_{G_0}|$ is even.

\smallskip
\noindent
{\bf CASE 1.} $|S_{G_0}| = 2$.

\smallskip
Then, by changing generators if necessary, we may assume that $S = T_1 \bdot \tau \bdot T_2 \bdot \alpha^{x}\tau$ with $1_{Q_8} = \sigma (T_1) \tau \sigma (T_2) \alpha^{x}\tau$ for some $T_1, T_2 \in \mathcal F \big( \langle \alpha \rangle \big)$ and $x \in [0,3]$.

If $T_2$ is trivial, then $T_1 \bdot \alpha^{2-x} \in \mathcal A \big( \langle \alpha \rangle \big)$ of length 4, and it follows that $T_1 = (\alpha^{i})^{[3]}$ and $2-x \equiv j \pmod{4}$ for some odd $j \in [1,3]$.
Since $j$ is odd, we have that $x = j$, and so
\begin{equation*}
  S = (\alpha^{j})^{[3]} \bdot \tau \bdot \alpha^{j}\tau \,\, \mbox{ with } \,\, j \in \{ 1, 3 \} \,.
\end{equation*}

If $T_1$ is trivial, then the same argument shows that $T_2 = (\alpha^{i})^{[3]}$ and $2 + x \equiv j \pmod{4}$ for some odd $j \in [1,3]$, whence we obtain that
\begin{equation*}
  S = (\alpha^{j})^{[3]} \bdot \tau \bdot \alpha^{j+2}\tau \,\, \mbox{ with } \,\, j \in \{ 1, 3 \} \,.
\end{equation*}

Thus, we may assume that both $T_1$ and $T_2$ are non-trivial sequences.
Since $S \in \mathcal A (Q_8)$, it follows that all $T_1$ and $T_2$ are product-one free sequences.
Since $|S_{\langle \alpha\rangle}| = 3$, we may assume that $|T_1| = 1$ and $|T_2| = 2$.
Then, it is easy to see that $T_2 = (\alpha^{j})^{[2]}$ or $T_2 = \alpha^{j} \bdot \alpha^{2j}$ for some odd $j \in [1,3]$.

If $T_2 = (\alpha^{j})^{[2]}$, then $S = \alpha^{i} \bdot \tau \bdot (\alpha^{j})^{[2]} \bdot \alpha^{x}\tau$ for some $i \in [1,3]$.
Since $\alpha^{2}$ is a center element in $Q_8$, the product-one equation of $S$ implies that $1_{Q_8} = \alpha^{i} \alpha^{2j} \tau \alpha^{x}\tau = \alpha^{i+2j-x+2}$.
Because $j$ is odd, it follows that $i \equiv x \pmod{4}$.
If $i = x = 2$, then $S = \big( (\alpha^{j})^{[2]} \bdot \alpha^{i} \big) \bdot \big( \tau \bdot \alpha^{x}\tau \big)$ is not a minimal product-one sequence.
Thus, $i=x$ is odd, and if $i=x$ and $j$ are distinct, then $S = \big( (\alpha^{j}) \bdot \alpha^{i} \big) \bdot \big( \alpha^{j} \bdot \tau \bdot \alpha^{x}\tau \big)$ is not a minimal product-one sequence.
Hence, we must have that $i = x = j$ is odd, so that
\begin{equation*}
  S = (\alpha^{j})^{[3]} \bdot \tau \bdot \alpha^{j}\tau \,\, \mbox{ with } \,\, j \in \{1, 3 \} \,.
\end{equation*}

If $T_2 = \alpha^{j} \bdot \alpha^{2}$, then since $\alpha^{2}$ is a center element in $Q_8$, we infer that $1_{Q_8} = \alpha^{i} \alpha^{2} \tau \alpha^{j} \alpha^{x}\tau$ for some $i \in [1,3]$.
Since $S \in \mathcal A (Q_8)$, it follows that $\alpha^{i} \bdot \alpha^{2}$ is a product-one free sequence, so that $i \in [1,3]$ must be odd.
It follows by the product-one equation that $x \equiv i-j \pmod{4}$, and since $j \in [1,3]$ is also odd, we have that $x = 0$ if $i = j$, or $x = 2$ if $i \neq j$.
Hence,
\begin{equation*}
  S = \left\{ \begin{array}{ll}
  		  \alpha^{2} \bdot (\alpha^{j})^{[2]} \bdot \tau^{[2]} \vspace{5pt}\\
                   \alpha^{2} \bdot \alpha^{j} \bdot \alpha^{j+2} \bdot \tau \bdot \alpha^{2}\tau
                   \end{array} \right. \,\, \mbox{ with } \,\, j \in  \{ 1, 3 \} \,.
\end{equation*}

\smallskip
\noindent
{\bf CASE 2.} $|S_{G_0}| = 4$.

\smallskip
Then, by changing generators if necessary, we may assume that $S = \alpha^{i} \bdot \tau \bdot \alpha^{x}\tau \bdot \alpha^{y}\tau \bdot \alpha^{z}\tau$ for some $i \in [1,3]$ and $x, y, z \in [0,3]$ satisfying the product-one equation
\[
  \alpha^{i} \tau \alpha^{x}\tau \alpha^{y}\tau \alpha^{z}\tau = 1_{Q_8} \quad \mbox{ or } \quad \tau \alpha^{i} \alpha^{x}\tau \alpha^{y}\tau \alpha^{z}\tau = 1_{Q_8} \,.
\]

Suppose that $\alpha^{i} \tau \alpha^{x}\tau \alpha^{y}\tau \alpha^{z}\tau = 1_{Q_8}$.
Since $S \in \mathcal A (Q_8)$, it follows that $S' = \alpha^{i} \bdot \alpha^{2-x} \bdot \alpha^{2+y-z} \in \mathcal A \big( \langle \alpha \rangle \big)$, whence $S' = (\alpha^{j})^{[2]} \bdot \alpha^{2}$ for some odd $j \in [1,3]$.

If $i = 2$ and $2-x \equiv 2+y-z \equiv j \pmod{4}$, then since $j$ is an odd, we obtain that $x = j$.
Thus, $y \equiv z-j \pmod{4}$, and so $S = \alpha^{2} \bdot \tau \bdot \alpha^{j}\tau \bdot \alpha^{z-j}\tau \bdot \alpha^{z}\tau$.
If $z=0$ (or $z = 2$), then for each odd $j \in [1,3]$,
\[
  S = \big( \alpha^{2} \bdot \tau \bdot \alpha^{z}\tau \big) \bdot \big( \alpha^{j}\tau \bdot \alpha^{z-j}\tau \big) \,\, \Big( \mbox{or } \,\, S = \big( \alpha^{2} \bdot \alpha^{j}\tau \bdot \alpha^{z-j}\tau \big) \bdot \big( \tau \bdot \alpha^{z}\tau \big) \Big)
\]
is not a minimal product-one sequence.
Thus, $z$ is also odd, and since $x = j$ is odd, we have either $z = j$ or $z \equiv j+2 \pmod{4}$.
Hence, we obtain that
\begin{equation} \label{eq:i=2}~
  S = \left\{ \begin{array}{ll}
  		  \alpha^{2} \bdot \tau^{[2]} \bdot (\alpha^{j}\tau)^{[2]} \vspace{5pt}\\
                   \alpha^{2} \bdot \tau \bdot \alpha^{2}\tau \bdot \alpha^{j}\tau \bdot \alpha^{j+2}\tau
                   \end{array} \right. \,\, \mbox{ with } \,\, j \in  \{ 1, 3 \} \,.
\end{equation}

If $2-x \equiv 2 \pmod{4}$ and $i \equiv 2+y-z \equiv j \pmod{4}$, then $x = 0$, $i = j$, and $y \equiv z+j+2 \pmod{4}$.
If $z = 2$, then $S = \big( \alpha^{j} \bdot \tau \bdot \alpha^{z+j+2}\tau \big) \bdot \big( \tau \bdot \alpha^{z}\tau \big)$ is not a minimal product-one sequence.
If $z$ is odd and $z \equiv j+2 \pmod{4}$, then $S = \big( \alpha^{j} \bdot \tau \bdot \alpha^{z}\tau \big) \bdot \big( \tau \bdot \alpha^{z+j+2}\tau \big)$ is not a minimal product-one sequence.
Thus, we have either $z = 0$ or $z = j$ (in this case, $z+j+2 \equiv 2j+2 \equiv 0 \pmod{4}$), and hence
\begin{equation*}
  S = \left\{ \begin{array}{ll}
  		  \tau^{[3]} \bdot \alpha^{j} \bdot \alpha^{j+2}\tau \vspace{5pt}\\
                   \tau^{[3]} \bdot \alpha^{j} \bdot \alpha^{j}\tau
                   \end{array} \right. \,\, \mbox{ with } \,\, j \in  \{ 1, 3 \} \,.
\end{equation*}

If $2+y-z \equiv 2 \pmod{4}$ and $i \equiv 2-x \equiv j \pmod{4}$, then since $j$ is odd, it follows that $y = z$, $i=j =x$.
If $z = 2$, then $S = \big( \alpha^{j} \bdot \alpha^{j}\tau \bdot \alpha^{z}\tau \big) \bdot \big( \tau \bdot \alpha^{z}\tau \big)$ is not a minimal product-one sequence.
If $z$ is odd and $z \equiv j+2 \pmod{4}$, then $S = \big( \alpha^{j} \bdot \tau \bdot \alpha^{z}\tau \big) \bdot \big( \alpha^{j}\tau \bdot \alpha^{z}\tau \big)$ is not a minimal product-one sequence.
Thus, we have either $z = 0$ or $z = j$, and hence
\begin{equation*}
  S = \left\{ \begin{array}{ll}
  		  \tau^{[3]} \bdot \alpha^{j} \bdot \alpha^{j}\tau \vspace{5pt}\\
                   (\alpha^{j}\tau)^{[3]} \bdot \alpha^{j} \bdot \tau
                   \end{array} \right. \,\, \mbox{ with } \,\, j \in  \{ 1, 3 \} \,.
\end{equation*}

Suppose now that $\alpha^{-i} \tau \alpha^{x}\tau \alpha^{y}\tau \alpha^{z}\tau = 1_{Q_8}$.
Since $S \in \mathcal A (Q_8)$, it follows that $S'' = \alpha^{-i} \bdot \alpha^{2-x} \bdot \alpha^{2+y-z} \in \mathcal A \big( \langle \alpha \rangle \big)$, and hence $S'' = (\alpha^{j})^{[2]} \bdot \alpha^{2}$ for some odd $j \in [1,3]$.

If $-i \equiv 2 \pmod{4}$, then $i = 2$, and hence $S$ has the same structure as in (\ref{eq:i=2}).

If $2-x \equiv 2 \pmod{4}$ and $-i \equiv 2+y-z \equiv j \pmod{4}$, then since $j$ is odd, we obtain that $x = 0$, $i \equiv j+2 \pmod{4}$, and $y \equiv z+j+2 \pmod{4}$.
If $z = 2$, then $S = \big( \alpha^{j+2} \bdot \tau \bdot \alpha^{z+j+2}\tau \big) \bdot \big( \tau \bdot \alpha^{z}\tau \big)$ is not a minimal product-one sequence.
If $z$ is odd and $z \equiv j+2 \pmod{4}$, then $S = \big( \alpha^{j+2} \bdot \tau \bdot \alpha^{z}\tau \big) \bdot \big( \tau \bdot \alpha^{z+j+2}\tau \big)$ is not a minimal product-one sequence.
Thus, we have either $z = 0$ or $z = j$ (in this case, $z+j+2 \equiv 2j+2 \equiv 0 \pmod{4}$), and hence
\begin{equation*}
  S = \left\{ \begin{array}{ll}
  		  \tau^{[3]} \bdot \alpha^{j+2} \bdot \alpha^{j+2}\tau \vspace{5pt}\\
                   \tau^{[3]} \bdot \alpha^{j+2} \bdot \alpha^{j}\tau
                   \end{array} \right. \,\, \mbox{ with } \,\, j \in  \{ 1, 3 \} \,.
\end{equation*}

If $2+y-z \equiv 2 \pmod{4}$ and $-i \equiv 2-x \equiv j \pmod{4}$, then since $j$ is odd, we obtain that $y = z$, $i \equiv j+2 \pmod{4}$, and $x=j$.
If $z = 2$, then $S = \big( \alpha^{j+2} \bdot \alpha^{j}\tau \bdot \alpha^{z}\tau \big) \bdot \big( \tau \bdot \alpha^{z}\tau \big)$ is not a minimal product-one sequence.
If $z$ is odd and $z \equiv j+2 \pmod{4}$, then $S = \big( \alpha^{j+2} \bdot \tau \bdot \alpha^{j+2}\tau \big) \bdot \big( \alpha^{j}\tau \bdot \alpha^{j+2}\tau \big)$ is not a minimal product-one sequence.
Thus, we have either $z = 0$ or $z = j$ (in this case, $z+j+2 \equiv 2j+2 \equiv 0 \pmod{4}$), and thence
\begin{equation*}
  S = \left\{ \begin{array}{ll}
  		  \tau^{[3]} \bdot \alpha^{j+2} \bdot \alpha^{j}\tau \vspace{5pt}\\
                   (\alpha^{j}\tau)^{[3]} \bdot \alpha^{j+2} \bdot \tau
                   \end{array} \right. \,\, \mbox{ with } \,\, j \in  \{ 1, 3 \} \,.
\end{equation*}
This complete the proof of Item 2.

\smallskip
3. Let $S \in \mathcal A (Q_8)$ with $|S| = 4$.
If $| S_{G_0} | = 0$, i.e., $S \in \mathcal F \big( \langle \alpha \rangle \big)$, then since $\mathsf D \big( \langle \alpha \rangle \big) = 4$, it follows that
\begin{equation*}
  S = \alpha^{[4]} \,.
\end{equation*}
Thus, since $\ord (\tau) = 4$ and $\tau^{2} = \alpha^{2}$, we may assume that $|S_{G_0}|$ is even.

\smallskip
\noindent
{\bf CASE 1.} $|S_{G_0}| = 2$.

\smallskip
Then, by changing generators if necessary, we may assume that $S = T_1 \bdot \tau \bdot T_2 \bdot \alpha^{x}\tau$ with $1_{Q_8} = \sigma (T_1) \tau \sigma (T_2)\alpha^{x}\tau$ for some $T_1, T_2 \in \mathcal F \big( \langle \alpha \rangle \big)$ and $x \in [0,3]$.

If $T_2$ is trivial, then $T_1 \bdot \alpha^{2-x} \in \mathcal A \big( \langle \alpha \rangle \big)$ of length 3, and so $T_1 \bdot \alpha^{2-x} = \alpha^{2} \bdot (\alpha^{j})^{[2]}$ for some odd $j \in [0,3]$.
If $2-x \equiv 2 \pmod{4}$, then $x = 0$, and hence
\begin{equation} \label{eq:x=0}~
  S = (\alpha^{j})^{[2]} \bdot \tau^{[2]} \,\, \mbox{ with } \,\, j \in \{ 1, 3 \} \,.
\end{equation}
If $2-x \equiv j \pmod{4}$, then since $j$ is odd, we have that $x = j$, and hence
\begin{equation*}
  S = \alpha^{2} \bdot \alpha^{j} \bdot \tau \bdot \alpha^{j}\tau \,\, \mbox{ with } \,\, j \in \{ 1, 3 \} \,.
\end{equation*}

If $T_1$ is trivial, then $T_2 \bdot \alpha^{x+2} \in \mathcal A \big( \langle \alpha \rangle \big)$ of length 3, and so $T_2 \bdot \alpha^{x+2} = \alpha^{2} \bdot (\alpha^{j})^{[2]}$ for some odd $j \in [1, 3]$.
If $x+2 \equiv 2 \pmod{4}$, then $x = 0$ and hence $S$ has the same structure as in (\ref{eq:x=0}).
If $x+2 \equiv j \pmod{4}$, then we obtain that
\begin{equation*}
  S = \alpha^{2} \bdot \alpha^{j} \bdot \tau \bdot \alpha^{j+2}\tau \,\, \mbox{ with } \,\, i \in \{ 1, 3 \} \,.
\end{equation*}

Hence, we suppose that both $T_1$ and $T_2$ are non-trivial, and since $S \in \mathcal A (Q_8)$, we infer that $T_1$ and $T_2$ are product-one free sequences.
This means that $T_1 = \alpha^{i}$ and $T_2 = \alpha^{j}$ for some $i, j \in [1,3]$, so that $S = \alpha^{i} \bdot \tau \bdot \alpha^{j} \bdot \alpha^{x}\tau$ with $i+2 \equiv j+x \pmod{4}$.

If both $i$ and $j$ are even, then $i = j = 2$ and $x = 2$, and so $S = \big( \alpha^{2} \bdot \alpha^{2} \big) \bdot \big( \tau \bdot \alpha^{2}\tau \big)$ is not a minimal product-one sequence.
If $i = 2$ and $j \in  \{ 1, 3 \}$ with $x \equiv j + 2 \pmod{4}$, then
\begin{equation*}
  S = \alpha^{2} \bdot \alpha^{j} \bdot \tau \bdot \alpha^{j+2}\tau \,\, \mbox{ with } \,\, j \in \{ 1, 3 \} \,.
\end{equation*}
If $j = 2$ and $i \in \{ 1, 3 \}$ with $x \equiv i \pmod{4}$, then
\begin{equation*}
  S = \alpha^{2} \bdot \alpha^{i} \bdot \tau \bdot \alpha^{i}\tau \,\, \mbox{ with } \,\, i \in \{ 1, 3 \} \,.
\end{equation*}
Suppose that both $i$ and $j$ are odd.
If $i \neq j$, then we must have that $x = 0$, and hence
\begin{equation*}
  S = \tau^{[2]} \bdot \alpha^{j} \bdot \alpha^{j+2} \,\, \mbox{ with } \,\, j \in  \{ 1, 3 \} \,.
\end{equation*}
If $i = j$, then $x = 2$, and thus
\begin{equation*}
  S = (\alpha^{j})^{[2]} \bdot \tau \bdot \alpha^{2}\tau \,\, \mbox{ with } \,\, j \in \{ 1, 3 \} \,.
\end{equation*}

\smallskip
\noindent
{\bf CASE 2.} $|S_{G_0}| = 4$.

\smallskip
Then, by changing generators if necessary, we may assume that $S = \tau \bdot \alpha^{x}\tau \bdot \alpha^{y}\tau \bdot \alpha^{z}\tau$ with $1_{Q_8} = \tau \alpha^{x}\tau \alpha^{y}\tau \alpha^{z}\tau$ for some $x, y, z \in [0,3]$.
Since $S \in \mathcal A (Q_8)$, it follows that $S' = \alpha^{2-x} \bdot \alpha^{y-z+2} \in \mathcal A \big( \langle \alpha \rangle \big)$, so that $S' = \alpha \bdot \alpha^{3}$ or $S' = \alpha^{2} \bdot \alpha^{2}$.

Suppose that $S' = \alpha \bdot \alpha^{3}$.
If $2-x \equiv 1 \pmod{4}$ and $y-z+2 \equiv 3 \pmod{4}$, then we have that $x = 1$ and $y \equiv 1+z \pmod{4}$.
If $z = 2$, then $S = \big( \tau \bdot \alpha^{z}\tau \big) \bdot \big( \alpha\tau \bdot \alpha^{1+z}\tau \big)$ is not a minimal product-one sequence.
Thus, we obtain that
\begin{equation*}
  S = \tau \bdot \alpha\tau \bdot \alpha^{z}\tau \bdot \alpha^{z+1}\tau \,\, \mbox{ with } \,\, z \in \{ 0, 1, 3 \} \,.
\end{equation*}
If $2-x \equiv 3 \pmod{4}$ and $y-z+2 \equiv 1 \pmod{4}$, then we have that $x = 3$ and $y \equiv z+3 \pmod{4}$.
If $z = 2$, then $S = \big( \tau \bdot \alpha^{z}\tau \big) \bdot \big( \alpha^{3}\tau \bdot \alpha\tau \big)$ is not a minimal product-one sequence.
Thus, we obtain that
\begin{equation*}
  S = \tau \bdot \alpha^{3}\tau \bdot \alpha^{z}\tau \bdot \alpha^{z+3}\tau \,\, \mbox{ with } \,\, z \in \{ 0, 1, 3 \} \,.
\end{equation*}

Suppose now that $S' = \alpha^{2} \bdot \alpha^{2}$, so that $2-x \equiv 2 \pmod{4}$ and $y-z+2 \equiv 2 \pmod{4}$.
Then, we have that $x = 0$ and $y = z$, and if $z = 2$, then $S = \big( \tau \alpha^{z}\tau \big) \bdot \big( \tau \alpha^{z}\tau \big)$ is not a minimal product-one sequence.
Thus, we obtain that
\begin{equation*}
  S = \tau^{[2]} \bdot (\alpha^{z}\tau)^{[2]} \,\, \mbox{ with } \,\, z \in \{ 0, 1, 3  \} \,.
\end{equation*}
This complete the proof of Item 3.

\smallskip
4. Let $S \in \mathcal A (Q_8)$ with $|S| = 3$.
If $|S_{G_0}| = 0$, i.e., $S \in \mathcal F \big( \langle \alpha \rangle \big)$, then we obtain that
\[
  S = \alpha^{2} \bdot ( \alpha^{j} )^{[2]} \,\, \mbox{ with } \,\, j \in \{ 1, 3 \} \,.
\]
Thus, since $\ord (\tau) = 4$ and $\tau^{2} = \alpha^{2}$, we only have the case when $|S_{G_0}| = 2$.
Then, by changing generators if necessary, we may assume that $S = T_1 \bdot \tau \bdot T_2 \bdot \alpha^{x}\tau$ with $\sigma (T_1) \tau \sigma (T_2) (\alpha^{x}\tau) = 1_{Q_8}$ for some $T_1, T_2 \in \mathcal F \big( \langle \alpha \rangle \big)$ and $x \in [0,3]$.
Since $|S| = 3$, it follows that either $T_1$ or $T_2$ is trivial.
Suppose first that $T_1$ is trivial.
Then, $S = \tau \bdot \alpha^{j} \bdot \alpha^{x} \tau$ for some $j \in [1,3]$ and $x \in [0,3]$, and the product-one equation ensures  that $-j - x + 2 \equiv 0 \pmod{4}$.
Thus, we obtain that $(j,x) \in \{ (1,1), (2,0), (3,3) \}$, and hence
\[
  S = \alpha^{j} \bdot \tau \bdot \alpha^{2-j}\tau \,\, \mbox{ with } \,\, j \in [1,3] \,.
\]
Suppose now that $T_2$ is trivial.
Then, $S = \alpha^{j} \bdot \tau \bdot \alpha^{x}\tau$ for some $i \in [1,3]$ and $x \in [0,3]$, and the product-one equation ensures that $j - x + 2 \equiv 0 \pmod{4}$.
Thus, we obtain that $(j,x) \in \{ (1,3), (2,0), (3,1) \}$, whence
\[
  S = \alpha^{j} \bdot \tau \bdot \alpha^{j+2}\tau \,\, \mbox{ with } \,\, j \in [1,3] \,.
\]
This complete the proof of Item 4.
\end{proof}

\smallskip
\begin{lemma} \label{lem:max}~
Let $U, V \in \mathcal A (Q_8)$.
\begin{enumerate}
\item $\max \mathsf L (U \bdot V) \le \min \{ |U|, |V| \}$.

\smallskip
\item If $|U|, |V| \in \{ 5, 6 \}$, then $\max \mathsf L (U \bdot V) \ge 3$.
\end{enumerate}
\end{lemma}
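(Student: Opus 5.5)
We argue by direct case analysis, using the complete list of minimal product-one sequences in Theorem~\ref{thm:Davenport5}. Throughout we use three facts. First, $\mathsf D (Q_8) = 6$. Second, $\operatorname{Aut}(Q_8)$ acts transitively on ordered pairs $(\alpha,\tau)$ of generators with $\tau \neq \pm \alpha$. Third, $Q_8' = \{ \pm E \}$ is central. The only atom of length $1$ is $E$, and the atoms of length $2$ are exactly $x \bdot x^{-1}$. If $E \mid U$, then $U = E$ and both items are trivial, so we assume $E \nmid U \bdot V$; hence every atom in a factorization of $U \bdot V$ has length at least $2$.

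\textbf{Item 1.} By symmetry assume $|U| \le |V|$. Suppose $U \bdot V = W_1 \bdot \ldots \bdot W_k$ with $k \ge |U| + 1$. We split the $W_i$ into two groups: those containing a term of $U$, say $a$ of them with $a \le |U|$, and those with $W_i \mid V$ in $\mathcal F (Q_8)$. Since $k > |U| \ge a$, some $W_i$ divides $V$ properly.

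The key observation is as follows. Let $W$ be the product of all $W_i$ that divide $V$. Since $V$ is an atom, $V \bdot W^{[-1]}$ is not product-one. On the other hand, $\pi (V \bdot W^{[-1]}) \subseteq G' = \{ \pm E \}$, so this set equals $\{ -E \}$, and all terms of $V \bdot W^{[-1]}$ commute pairwise, as in the proof of Theorem~\ref{thm:P}. The plan is to combine this with the list of Theorem~\ref{thm:Davenport5}:
\begin{itemize}
\item If $|V| \le 3$, then $V$ has no proper product-one subsequence, so every $W_i$ meets $U$. Then $k = a \le |U|$.
\item If $|V| \in [4,6]$, we read off from (\ref{eq:Da6})--(\ref{eq:Da4}) the proper product-one subsequences of $V$. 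Typical examples are $\alpha^{[4]} \mid \alpha^{[4]} \bdot \tau^{[2]}$ and $\tau^{[2]} \bdot \alpha^{[2]} \mid \alpha^{2} \bdot \alpha^{[2]} \bdot \tau^{[2]}$. From these we bound the number $b$ of disjoint ones, and we find that $b \le 1$ in essentially all cases.
\end{itemize}

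It then remains to show $a + b \le |U|$. The leftover part of $V$, with product $-E$, has to be absorbed by the $W_i$ that meet $U$. For the constraint $a \le |U| - b$ we need a second counting step: the leftover part together with $U$ must be covered by only $a$ atoms. Counting lengths gives $2a \le |U| + |V \bdot W^{[-1]}|$. We then handle the few remaining configurations by checking products inside $\langle \alpha \rangle$ modulo the relation $\tau x \tau^{-1} = x^{-1}$.

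\textbf{Item 2.} By the transitivity of $\operatorname{Aut}(Q_8)$ we may normalize $U$ to one of the five shapes in (\ref{eq:Da6}) and (\ref{eq:Da5}). For each such $U$ and each admissible $V$, we exhibit a factorization of $U \bdot V$ into at least three atoms. The approach is to find either a pair $x \in \supp (U)$ and $x^{-1} \in \supp (V)$, or, failing that, a short atom from the list (\ref{eq:Da3}) or (\ref{eq:Da4}), such as $\alpha^{2} \bdot \alpha^{[2]}$ or $\alpha \bdot \tau \bdot \alpha\tau$, that uses terms from both $U$ and $V$. We then check that the remaining product-one sequence splits into at least two atoms. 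This splitting holds because the remainder has length at least $6$ and contains a product-one subsequence of the form $y^{[2]} \bdot z^{[2]}$ or $y^{[4]}$.

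Every atom of length $5$ or $6$ contains at least two terms of order $4$ from the same coset and an element of $\langle \alpha \rangle$. Hence in $U \bdot V$ some element $g$ of order $4$ appears at least twice, and either $g^{-1}$ also occurs or $g^{[2]}$ combines with a suitable pair to give a length-$4$ atom $g^{[2]} \bdot h^{[2]}$. This reduces the verification to a small table.

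The main obstacle is Item 1, and specifically ruling out the case where a long $V$ contributes a proper product-one subsequence while the $U$-side still supplies $|U|$ atoms. This requires the case-by-case inspection of (\ref{eq:Da4})--(\ref{eq:Da6}) against (\ref{eq:Da3})--(\ref{eq:Da6}), and the leftover-product-$(-E)$ argument is what should keep that inspection short.
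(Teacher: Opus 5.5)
Your plan for Item~1 can be completed, but Item~2 has a genuine gap at the splitting step. There you remove an atom $A$ straddling $U$ and $V$ and claim the remainder splits ``because [it] has length at least $6$ and contains a product-one subsequence of the form $y^{[2]}\bdot z^{[2]}$ or $y^{[4]}$.'' In $\mathcal B (Q_8)$ this inference is invalid. Since $\mathcal B (Q_8)\hookrightarrow\mathcal F (Q_8)$ is not a divisor homomorphism, the remainder need not be product-one, and a product-one subsequence need not split off. For instance, the length-$6$ atom $\alpha^{[4]}\bdot\tau^{[2]}$ contains both $\alpha^{[4]}$ and $\alpha^{[2]}\bdot\tau^{[2]}$.

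Your concrete recipe in fact fails on $U=V=\alpha^{[3]}\bdot\tau\bdot\alpha\tau$. Here $U\bdot V=\alpha^{[6]}\bdot\tau^{[2]}\bdot(\alpha\tau)^{[2]}$ contains no pair $x, x^{-1}$, so you are in the second branch. Removing any length-$4$ atom $g^{[2]}\bdot h^{[2]}$ leaves one of:
$\alpha^{[4]}\bdot(\alpha\tau)^{[2]}$ or $\alpha^{[4]}\bdot\tau^{[2]}$, both atoms of the form (\ref{eq:Da6});
or $\alpha^{[6]}$, which is not product-one.
So the recipe only ever yields factorizations of length $2$. One must pick $\alpha^{[4]}$ or a length-$3$ atom instead, and your argument gives no rule guaranteeing a good choice. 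In the inverse-pair branch your conclusion is right, but for a different reason than the one you give. Rotating product-one orderings of $U$ and $V$ shows the remainder is product-one, and its length is $|U|+|V|-2\ge 8>\mathsf D (Q_8)$, so it is not an atom.

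What is missing is the right criterion: write $U\bdot V=A\bdot R$ with $R$ product-one and $|R|\ge 7>\mathsf D (Q_8)$. This gives a proof shorter than the paper's explicit three-block decompositions. Every atom $U$ of length $5$ or $6$ has a proper product-one subsequence $A$ with $|U\bdot A^{[-1]}|\ge 2$: take $\alpha^{[4]}$, $\alpha\bdot\tau\bdot\alpha^{\pm 1}\tau$, $\alpha^{2}\bdot\alpha^{[2]}$, or $\alpha\bdot\alpha^{-1}$ for the respective forms. Then $\pi (U\bdot A^{[-1]})=\{-E\}$. Also $\pi (V)=\{\pm E\}$, because an atom of length at least $5$ cannot lie in an abelian subgroup of $Q_8$ (these are cyclic of order at most $4$). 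Hence $R=(U\bdot A^{[-1]})\bdot V$ is product-one of length at least $7$, so it is not an atom.

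Your Item~1 plan is a legitimate variant of the paper's partition argument and can be finished as follows. Suppose $b\ge 1$ and $a=|U|$. Then each atom meeting $U$ has the form $u_i\bdot L_i$, where $L_i$ has an ordering with product $u_i^{-1}$. Reversing a product-one ordering of $U$ then makes $L=V\bdot W^{[-1]}$ product-one, a contradiction, so $a\le |U|-1$. By the list, $b=2$ occurs only for $V=\alpha^{2}\bdot\alpha\bdot\alpha^{-1}\bdot\tau\bdot\tau^{-1}$ with $L=\alpha^{2}$. In that case $2a\le |U|+1$ together with a direct check for $|U|\le 3$ suffices.

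Finally, $U=E$ is not a trivial case for Item~1. We have $\mathsf L (E\bdot V)=\{2\}$ while $\min\{|U|,|V|\}=1$, so the statement must tacitly exclude $1_{Q_8}$, as the paper also does.
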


\begin{proof}
1. Let $U \bdot V = W_1 \bdot \ldots \bdot W_m$, where $m = \max \mathsf L (U \bdot V)$ and $W_1, \ldots, W_m  \in \mathcal A (Q_8)$.
Since $1_{Q_8}$ is a prime element in $\mathcal B (Q_8)$, we may assume that $U$, $V$, and $W_1, \ldots, W_m$ have their length at least 2.
Thus, if $m=2$, then the assertion follows directly, and so we assume that $m \ge 3$.
We set $U = U_1 \bdot \ldots \bdot U_m$ and $V = V_1 \bdot \ldots \bdot V_m$ with $U_i \bdot V_i = W_i$ for $i \in [1,m]$.
If $U_i$ and $V_i$ are non-trivial for all $i \in [1,m]$, then $m \le \sum_{i \in [1,m]} |U_i| = |U|$ and $m \le \sum_{i \in [1,m]} |V_i| = |V|$, and thus the assertion follows.

Suppose that there exists some $j \in [1,m]$ such that either $U_j$ or $V_j$ is trivial.
Then, after renumbering if necessary, we may assume that $U_1$ is a trivial sequence, so that $V_1 = W_1$.

We first suppose that $V_i$ is non-trivial for all $i \in [2,m]$.
Since $V \in \mathcal A (Q_8)$, it follows that
\[
  2 + (m-1) \le |V_1| + \small{\sum}_{i \in [2,m]} |V_i| = |V| \le \mathsf D (Q_8) = 6 \,,
\]
and thus $m \le 5$.
If $\min \{ |U|, |V| \} \ge 5$, then the assertion follows, whence we may assume that $\min \{ |U|, |V| \} \le 4$.
If $|V| = \min \{ |U|, |V| \} \le 4$, then $2 + (m-1) \le |V_1| + \sum_{i \in [2,m]} |V_i| = |V| \le 4$, and so $m = 3$.
In this case, we must have $|V| \ge 3$, for otherwise $m = 1$, a contradiction, and hence, $m = 3 \le |V| = \min \{ |U|, |V| \}$.
Now we consider the case $|U| = \min \{ |U|, |V| \} \le 4$.

Suppose that $m \ge 4$.
Then, $5 \le 2 + (m-1)  \le |V_1| + \sum_{i \in [2,m]} |V_i| = |V| \le 6$.
If $|V| = 6$, then in view of (\ref{eq:Da6}), $V = \alpha^{[4]} \bdot \tau^{[2]}$ for some generators $\alpha, \tau \in Q_8$.
Thus, it follows by Theorem~\ref{thm:Davenport5} that any product-one subsequence of $V$ must be of length 4, and hence $|V_1| = 4$.
Since $m \ge 4$, it follows that $|V| \ge 4 + 3 = 7  \gneq 6 = \mathsf D (Q_8)$, contradicting that $V \in \mathcal A (Q_8)$.
Thus, $|V| = 5$, and since $V_j$ is non-trivial for all $j \in [2,m]$, we obtain that $m = 4$, so that $|V_1| = 2$ and $|V_2| = |V_3| = |V_4| = 1$.
This means that $V$ is a minimal product-one sequence of length 5 having a product-one subsequence of length 2.
Then, in view of (\ref{eq:Da5}), we only have that $V = \alpha^{2} \bdot \alpha \bdot \alpha^{-1} \bdot \tau \bdot \tau^{-1}$ for some generators $\alpha, \tau \in Q_8$.
If $|U| \le 3$, then we obtain that either $U = \alpha^{2} \bdot \alpha \bdot \alpha^{-1}$ or $U = \alpha^{2} \bdot \tau \bdot \tau^{-1}$, but in view of (\ref{eq:Da3}), both are not minimal product-one sequences.
Thus, $|U| = 4$, and $m = 4 = |U| = \min \{ |U|, |V| \}$, whence the assertion follows.

Suppose that $m = 3$.
Assume to the contrary that $|U| = 2$, i.e., $U = g \bdot g^{-1}$ for some $g \in Q_8$.
Then, the same argument as used in the previous case shows that $|V| \in [4,6]$.
If $|V| = 6$, then in view of (\ref{eq:Da6}), $V = \alpha^{[4]} \bdot \tau^{[2]}$ for some generators $\alpha, \tau \in Q_8$.
In this case, since any product-one subsequence of $V$ has length 4, it follows that either $V_1 = \alpha^{[4]}$ and $V_2 = V_3 = \tau$, or that $V_1 = \alpha^{[2]} \bdot \tau^{[2]}$ and $V_2 = V_3 = \alpha$.
Since $|U| = 2$, we have that $U = \tau^{-1} \bdot \tau^{-1}$ or $U = \alpha^{-1} \bdot \alpha^{-1}$, but both are not product-one sequences, a contradiction.
If $|V| = 5$, then in view of (\ref{eq:Da5}), any product-one subsequence of $V$ must have length 2 or 3.
If $V$ has a product-one subsequence of length 2, then $V = \alpha^{2} \bdot \alpha \bdot \alpha^{-1} \bdot \tau \bdot \tau^{-1}$ is the only case for some generators $\alpha, \tau \in Q_8$, which ensures that either $V_1 = \alpha \bdot \alpha^{-1}$ or $V_1 = \tau \bdot \tau^{-1}$.
Since $|U| = 2$, we may assume that $|W_2| = 3$ and $|W_3| = 2$.
If $\alpha^{2} \mid W_2$, then in view of (\ref{eq:Da3}), $g \bdot g^{-1} = \tau \bdot \tau^{-1}$ or $g \bdot g^{-1} = \alpha \bdot \alpha^{-1}$.
This implies that $W_3 \in \{ \tau^{[2]}, (\tau^{-1})^{[2]}, \alpha^{[2]}, (\alpha^{-1})^{[2]} \}$, but in either case, $W_3$ is not product-one, a contradiction.
If $\alpha^{2} \mid W_3$, then since $|W_3| = 2$, it follows that $W_3 = \alpha^{2} \bdot \alpha^{2}$, which ensures that $U = \alpha^{2} \bdot \alpha^{2}$.
However, $W_2 = \tau \bdot \tau^{-1} \bdot \alpha^{2}$ or $W_2 = \alpha \bdot \alpha^{-1} \bdot \alpha^{2}$, and both are not product-one sequence, a contradiction.
If $|V| = 4$, then since $m = 3$ and $W_1, W_2, W_3$ are of length at least 2, we infer that $|W_1| = |W_2| = |W_3| = 2$.
In view of (\ref{eq:Da4}), $V = \alpha^{[2]} \bdot \tau \bdot \tau^{-1}$ is the only case, which implies that $U = \alpha^{-1} \bdot \alpha^{-1}$, a contradiction.
Therefore, we must have that $|U| \ge 3$, and $m = 3 \le |U| = \min \{ |U|, |V| \}$, whence the assertion follows.

We now assume that $V_i$ is trivial for some $i \in [2,m]$.
Then, after renumbering if necessary, we may assume that $V_2$ is trivial, so that $U_2 = W_2$.
Then, $U = W_2 \bdot U_3 \bdot \ldots \bdot U_m$ and $V = W_1 \bdot V_3 \bdot \ldots \bdot V_m$.
If $U_j$ and $V_j$ are non-trivial for all $j \in [3,m]$, then since $W_1, W_2 \in \mathcal A (Q_8)$, it follows that
\[
  2 + (m-2) \le |W_2| + \small{\sum}_{i \in [3,m]} |U_i| = |U| \quad \mbox{ and } \quad 2 + (m-2) \le |W_1| + \small{\sum}_{i \in [3,m]} |V_i| = |V| \,,
\]
whence the assertion follows.
If there exists some $j \in [3,m]$ such that $U_j$ or $V_j$ is trivial, then after renumbering if necessary, we may assume that $U_3$ is trivial, so that $V_3 = W_3$.
Since $V \in \mathcal A (Q_8)$, we also obtain that $m \le 5$, so that we may assume that $\min \{ |U|, |V| \} \le 4$.
If $|V| = \min \{ |U|, |V| \}$, then since $|V| \ge 4$, it follows that $|V| = 4$ and $V = W_1 \bdot W_3$, a contradiction to $V \in \mathcal A (Q_8)$.
Suppose that $|U| = \min \{ |U|, |V| \} \le 4$.
If $U = W_2$, then $V = W_1 \bdot W_3 \bdot W_4 \bdot \ldots \bdot W_m$, a contradiction to $V \in \mathcal A (Q_8)$.
Thus, we must have that $W_2$ is a proper product-one subsequence of $U$.
Since $|U| \le 4$, in views of (\ref{eq:Da4})--(\ref{eq:Da3}), we must have that $|U| = 4$ and either $U \bdot W^{[-1]}_2 = \alpha \bdot \alpha$ or $U \bdot W^{[-1]}_2 = \alpha^{2}$.
Since $V$ has at least 2 proper product-one subsequences, in views of (\ref{eq:Da6})--(\ref{eq:Da4}), $|V| = 5$ and $V = \alpha^{2} \bdot \alpha \bdot \alpha^{-1} \bdot \tau \bdot \tau^{-1}$ is the only case.
So, we may assume that $W_1 = \alpha \bdot \alpha^{-1}$ and $W_3 = \tau \bdot \tau^{-1}$.
In this case, we must have that $U \bdot V = W_1 \bdot W_2 \bdot W_3 \bdot W_4$ with $W_4 = \big( U \bdot W^{[-1]}_2 \big) \bdot \alpha^{2}$.
Thus, $m = 4 = |U| = \min \{ |U|, |V| \}$, and hence the assertion follows.

\smallskip
2. We show that $\mathsf L (U \bdot V)$ contains some integer $k \ge 3$, and we distinguish two cases.

\smallskip
\noindent
{\bf CASE 1.} $|U| = 6$.

\smallskip
In view of (\ref{eq:Da6}), there exist generators $\alpha, \tau \in Q_8$ such that $U = \alpha^{[4]} \bdot \tau^{[2]}$.

Suppose that $|V| = 6$.
Then, again in view of (\ref{eq:Da6}), $V = (\alpha')^{[4]} \bdot (\tau')^{[2]}$ for suitable generators $\alpha', \tau' \in Q_8 = \langle \alpha, \tau \rangle$.
For any choice of $\alpha', \tau'$, in view of (\ref{eq:Da4}), we obtain that
\[
  U \bdot V = \alpha^{[4]} \bdot (\alpha')^{[4]} \bdot \big( \tau^{[2]} \bdot (\tau')^{[2]} \big) \,,
\]
and thus $3 \in \mathsf L (U \bdot V)$.

Suppose that $|V| = 5$.
Then, by Theorem~\ref{thm:Davenport5}, $V$ has one of the forms listed in (\ref{eq:Da5}) for suitable generators $\alpha', \tau' \in Q_8 = \langle \alpha, \tau \rangle$.
For any choice of $\alpha', \tau'$, in view of (\ref{eq:Da4})--(\ref{eq:Da3}), we obtain the following cases:
\begin{itemize}
\item If $V = (\alpha')^{[3]} \bdot \tau' \bdot \alpha'\tau'$ (or $V = (\alpha')^{[3]} \bdot \tau' \bdot (\alpha')^{3}\tau'$), then we obtain that
           \[
             U \bdot V = \alpha^{[4]} \bdot \big( \tau^{[2]} \bdot (\alpha')^{[2]} \big) \bdot \big( \alpha' \bdot \tau' \bdot \alpha'\tau' \big) \,\, \Big( \mbox{or } U \bdot V = \alpha^{[4]} \bdot \big( \tau^{[2]} \bdot (\alpha')^{[2]} \big) \bdot \big( \alpha' \bdot \tau' \bdot (\alpha')^{3}\tau' \big) \Big) \,,
           \]
           and thus $3 \in \mathsf L (U \bdot V)$.

\smallskip
\item If $V = \alpha^{2} \bdot (\alpha')^{[2]} \bdot (\tau')^{[2]}$, then we obtain that
           \[
             U \bdot V = \alpha^{[4]} \bdot \big( \alpha^{2} \bdot \tau^{[2]} \big) \bdot \big( (\alpha')^{[2]} \bdot (\tau')^{[2]} \big) \,,
           \]
           and so $3 \in \mathsf L (U \bdot V)$.

\smallskip
\item If $V = \alpha^{2} \bdot \alpha' \bdot (\alpha')^{-1} \bdot \tau \bdot (\tau')^{-1}$, then we obtain that
           \[
             U \bdot V = \alpha^{[4]} \bdot \big( \alpha^{2} \bdot \tau^{[2]} \big) \bdot \big( \alpha' \bdot (\alpha')^{-1} \big) \bdot \big( \tau' \bdot (\tau')^{-1} \big) \,,
           \]
           and hence $3 \in \mathsf L (U \bdot V)$.
\end{itemize}

\smallskip
\noindent
{\bf CASE 2.} $|U| = 5$.

\smallskip
Then, by Theorem~\ref{thm:Davenport5}, there exist generators $\alpha, \tau \in Q_8$ such that $U$ has one of the forms listed in (\ref{eq:Da5}).
If $\alpha^{[3]} \mid U$, then in view of (\ref{eq:Da3}), we set $U = \alpha^{[2]} \bdot U'$, where $U' =  \alpha \bdot \tau \bdot \alpha^{x}\tau \in \mathcal A (Q_8)$ for $x \in \{ 1, 3 \}$.
If $\alpha^{2} \mid U$, then in view of (\ref{eq:Da4}), we also set $U = \alpha^{2} \bdot U''$, where $U'' = \alpha^{[2]} \bdot \tau^{[2]} \in \mathcal A (Q_8)$ or $U'' = \big( \alpha \bdot \alpha^{-1} \big) \bdot \big( \tau \bdot \tau^{-1} \big) \in \mathcal B (Q_8)$ is a product of two atoms.

By swapping the role between $U$ and $V$, it suffices to consider the case when $|V| = 5$.
Hence, Theorem~\ref{thm:Davenport5} ensures again that $V$ has one of the forms listed in (\ref{eq:Da5}) for suitable generators $\alpha', \tau' \in Q_8 = \langle \alpha, \tau \rangle$.
We distinguish two cases.

\smallskip
\noindent
{\bf SUBCASE 2.1.} $\alpha^{[3]} \mid U$.

\smallskip
If $(\alpha')^{[3]} \mid V$, then we can also write $V = (\alpha')^{[2]} \bdot V'$, and so in view of (\ref{eq:Da4}),
\[
  U \bdot V = \big( \alpha^{[2]} \bdot (\alpha')^{[2]} \big) \bdot U' \bdot V' \,,
\]
implying that $3 \in \mathsf L (U \bdot V)$.
If $\alpha^{2} \mid V_2$, then we also write $V = \alpha^{2} \bdot V''$, and hence in view of (\ref{eq:Da3}),
\[
  U \bdot V = \big( \alpha^{2} \bdot \alpha^{[2]} \big) \bdot U' \bdot V'' \,,
\]
forcing that $\mathsf L ( U \bdot V)$ contains 3 or 4.

\smallskip
\noindent
{\bf SUBCASE 2.2.} $\alpha^{2} \mid U$.

\smallskip
By swapping the role between $U$ and $V$, it suffices to consider the case when $\alpha^{2} \mid V$.
Then, we obtain that
\[
  U \bdot V = \big( \alpha^{2} \bdot \alpha^{2} \big) \bdot U'' \bdot V'' \,,
\]
implying that $\mathsf L (U \bdot V)$ contains some integer $k \in \{ 3, 4, 5 \}$.
\end{proof}

\smallskip
\begin{proposition} \label{pro:c2}~
We have $\daleth (Q_8) = 4$.
\end{proposition}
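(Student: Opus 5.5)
The lower bound is immediate. The element $\alpha\in Q_8$ has order $4$, so $U=\alpha^{[4]}$ and $V=(\alpha^{-1})^{[4]}$ are minimal product-one sequences with $\mathsf L(U\bdot V)=\{2,4\}$. This is exactly the argument in the proof of Proposition~\ref{pro:non-abel}, and it gives $\daleth(Q_8)\ge 4$.

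For the upper bound we must show the following. For all $U,V\in\mathcal A(Q_8)$ with $\mathsf L(U\bdot V)\ne\{2\}$, the set $\mathsf L(U\bdot V)$ contains $3$ or $4$; equivalently, $\min(\mathsf L(U\bdot V)\setminus\{2\})\le 4$. We may assume $|U|,|V|\ge 2$, since $1_{Q_8}$ is prime.

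\emph{Case $\min\{|U|,|V|\}\le 4$.} By Lemma~\ref{lem:max}.1, $\max\mathsf L(U\bdot V)\le\min\{|U|,|V|\}\le 4$. Hence every length other than $2$ lies in $\{3,4\}$, and we are done.

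\emph{Case $|U|,|V|\in\{5,6\}$.} Here I would reuse the explicit factorizations constructed in the proof of Lemma~\ref{lem:max}.2, which were built from the classification in Theorem~\ref{thm:Davenport5}.
\begin{itemize}
\item In Case~1 ($|U|=6$), every factorization exhibited there has length $3$ or $4$. One example is $\alpha^{[4]}\bdot(\alpha^{2}\bdot\tau^{[2]})\bdot(\alpha'\bdot(\alpha')^{-1})\bdot(\tau'\bdot(\tau')^{-1})$, of length $4$.
\item In Subcase~2.1, the lengths obtained are $3$ or $4$.
\item Subcase~2.2 is the only place where a length of $5$ could arise. It occurs for $U=\alpha^{2}\bdot\alpha\bdot\alpha^{-1}\bdot\tau\bdot\tau^{-1}$ and $V=\alpha^{2}\bdot\alpha'\bdot(\alpha')^{-1}\bdot\tau'\bdot(\tau')^{-1}$, where the decomposition $(\alpha^2\bdot\alpha^2)\bdot(\alpha\bdot\alpha^{-1})\bdot(\tau\bdot\tau^{-1})\bdot(\alpha'\bdot(\alpha')^{-1})\bdot(\tau'\bdot(\tau')^{-1})$ has length $5$. (The middle element $\alpha^{2}$ is central and the same for every choice of generators, since it equals $-E$.)
\end{itemize}

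The main obstacle is this last subcase, where I must exhibit a factorization of length $3$ or $4$ directly. My plan is to regroup. Up to relabeling, the pairs $\{\alpha,\alpha^{-1}\}$ and $\{\tau,\tau^{-1}\}$ are two of the three pairs $\{\pm I\},\{\pm J\},\{\pm K\}$, and so are the corresponding pairs of $V$.

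\emph{If the four pairs coincide pairwise}, say $U\bdot V=(-E)^{[2]}\bdot I^{[2]}\bdot(-I)^{[2]}\bdot J^{[2]}\bdot(-J)^{[2]}$, then take
\[
(-E)\bdot I^{[2]}\bdot J\bdot(-J)\,,\qquad (-E)\bdot(-I)^{[2]}\bdot J\bdot(-J)\,.
\]
These are products of two atoms each, via the form $\alpha^2\bdot\alpha^{[2]}\bdot\tau\bdot\tau^{-1}$ splitting as $(\alpha^2\bdot\alpha^{[2]})\bdot(\tau\bdot\tau^{-1})$. Writing each as such a product gives a factorization of $U\bdot V$ of length $4$. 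Alternatively, one can use $I^{[2]}\bdot J^{[2]}$ (an atom of form $\alpha^{[2]}\bdot\tau^{[2]}$), $(-I)^{[2]}\bdot(-J)^{[2]}$, and $(-E)^{[2]}$, which gives length $3$.

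\emph{If $V$ uses the pair $\{\pm K\}$ in place of one of $U$'s pairs}, then use atoms of the form $\alpha\bdot\tau\bdot\alpha\tau$ from (\ref{eq:Da3}), for instance $I\bdot J\bdot(-K)$ and $(-I)\bdot(-J)\bdot(-K)$ together with the remaining terms. I expect a short case check, only two configurations up to symmetry, to yield length $3$ or $4$.

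Combining both cases gives $\daleth(Q_8)\le 4$, hence $\daleth(Q_8)=4$.
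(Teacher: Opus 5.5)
Your reduction is sound. Lemma~\ref{lem:max}.1 settles the case $\min\{|U|,|V|\}\le 4$. For $|U|,|V|\in\{5,6\}$, the factorizations built in the proof of Lemma~\ref{lem:max}.2 have length $3$ or $4$, except when both atoms have the form $\alpha^{2}\bdot\alpha\bdot\alpha^{-1}\bdot\tau\bdot\tau^{-1}$. When you cite them, note that a block such as $\tau^{[2]}\bdot(\tau')^{[2]}$ with $\tau'=\tau^{-1}$ is really $(\tau\bdot\tau^{-1})^{[2]}$, so some lengths stated there as $3$ are actually $4$; this does not hurt you. Your treatment of equal pair sets is also correct.

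The gap is the mixed configuration, which you leave as an expectation, and the instance you suggest does not fit. Up to an automorphism, $U\bdot V=(-E)^{[2]}\bdot I^{[2]}\bdot(-I)^{[2]}\bdot J\bdot(-J)\bdot K\bdot(-K)$ contains $-K$ only once, so $I\bdot J\bdot(-K)$ and $(-I)\bdot(-J)\bdot(-K)$ cannot both be used. The repair is short. Elements $x,y,w$ taken from the three different pairs pairwise anticommute, so $xyw=\pm E$ and $xwy=-xyw$. Hence $E\in\pi(x\bdot y\bdot w)$, and $x\bdot y\bdot w$ is an atom, since it contains neither an inverse pair nor $-E$. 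Therefore $U\bdot V=(I\bdot J\bdot K)\bdot\big((-I)\bdot(-J)\bdot(-K)\big)\bdot\big((-E)\bdot(-E)\big)\bdot\big(I\bdot(-I)\big)$ is a factorization of length $4$, and with this your argument is complete.

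Your route also differs from the paper's. The paper argues by contradiction. It takes a factorization $z$ with $|z|=\min(\mathsf L(U\bdot V)\setminus\{2\})\ge 5$ and uses Lemma~\ref{lem:max}.1 to get $|U|,|V|\in\{5,6\}$. It then counts lengths: $z$ consists of $|z|$ atoms of length $\ge 2$ with total length $|U|+|V|\le 12$. This pins down the shape of $U$ and $V$ (for instance $V=U^{-1}$) before a factorization of length $3$ or $4$ is exhibited.

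You instead recycle the explicit factorizations of Lemma~\ref{lem:max}.2, so you never infer structure from a hypothetical long factorization. The price is handling the exceptional subcase for arbitrary pairs of generators, and that price is well spent. The paper's step that $|U|=|V|=5$ forces $V=U^{-1}$ tacitly assumes each length-$2$ atom of $z$ takes one term from $U$ and one from $V$. That fails when both atoms have the form $\alpha^{2}\bdot\alpha\bdot\alpha^{-1}\bdot\tau\bdot\tau^{-1}$. Your mixed configuration has a length-$5$ factorization into atoms of length $2$, yet $V\neq U^{-1}$. It is exactly the case the paper's list omits, so your argument, once completed as above, is the more thorough one.
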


\begin{proof}
Since $Q_8$ has an element of order 4, we obtain $\daleth (Q_8) \ge 4$, whence it suffices to show that $\daleth (Q_8) \le 4$.
Assume to the contrary that there exist $U, V \in \mathcal A (Q_8)$ such that $\min \big( \mathsf L (U \bdot V) \setminus \{ 2 \} \big) \ge 5$.
Let $z \in \mathsf Z (U \bdot V)$ with $|z| = \min \big( \mathsf L (U \bdot V) \setminus \{ 2 \} \big)$.
Then, by Lemma~\ref{lem:max}.1, we obtain that
\[
  5 \, \le |z| \, \le \, \max \mathsf L (U \bdot V) \, \le \, \min \{ |U|, |V| \} \, \le \, \max \{ |U|, |V| \} \, \le \, \mathsf D (Q_8) \, = \, 6 \,,
\]
and hence $|U|, |V| \in \{5,6\}$.

Suppose that $|z| = 6$, and let $z = W_1 \bdot \ldots \bdot W_6$ for some $W_i \in \mathcal A (Q_8)$ with $|W_i| \ge 2$ for all $i\in [1,6]$.
Then, $12 \le \sum_{i \in [1,6]} |W_i| = |U| + |V| \le 12$ implies that $|U| = |V| = 6$, and hence $V = U^{-1}$ (see \cite[Lemma 4.2]{Oh19}).
In view of (\ref{eq:Da6}), there exist generators $\alpha, \tau \in Q_8$ such that
\[
  U = \alpha^{[4]} \bdot \tau^{[2]} \quad \mbox{ and } \quad V = (\alpha^{-1})^{[4]} \bdot (\tau^{-1})^{[2]} \,.
\]
In view of (\ref{eq:Da4}), $\alpha^{[2]} \bdot \tau^{[2]} \in \mathcal A (Q_8)$, so that we have a factorization
\[
  z_0 := \big( \alpha^{[2]} \bdot \tau^{[2]} \big) \bdot \big( (\alpha^{-1})^{[2]} \bdot (\tau^{-1})^{[2]} \big) \bdot \big( \alpha \bdot \alpha^{-1} \big)^{[2]} \in \mathsf Z (U \bdot V) \,\, \mbox{ with } \,\, |z_0| = 4 \,,
\]
a contradiction to the minimal length of $z$.

Suppose now that $|z| = 5$, and let $z = W_1 \bdot \ldots \bdot W_5$ for some $W_1, \ldots, W_5 \in \mathcal A (Q_8)$ with $|W_5| \ge |W_4| \ge \cdots \ge |W_1| \ge 2$.
If $|U| = |V| = 6$, then in view of (\ref{eq:Da6}), there exist generators $\alpha, \tau \in Q_8$ such that $U = \alpha^{[4]} \bdot \tau^{[2]}$, and also $V = (\alpha')^{[4]} \bdot (\tau')^{[2]}$ for some suitable generators $\alpha', \tau' \in Q_8 = \langle \alpha, \tau \rangle$.
For any choice of $\alpha', \tau'$, in view of (\ref{eq:Da4},
\[
  z_0 := \alpha^{[4]} \bdot (\alpha')^{[4]} \bdot \big( \tau^{[2]} \bdot (\tau')^{[2]} \big) \in \mathsf Z (U \bdot V) \,\, \mbox{ with } \,\, |z_0| = 3 \,,
\]
again a contradiction.
If $|U| = |V| = 5$, then $V = U^{-1}$, and by Theorem~\ref{thm:Davenport5}, there exist generators $\alpha, \tau \in Q_8$ such that $U$ has one of the forms listed in (\ref{eq:Da5}).

If $U = \alpha^{[3]} \bdot \tau \bdot \alpha^{x}\tau$ for $x \in \{ 1, 3 \}$, then $V = (\alpha^{-1})^{[3]} \bdot \tau^{-1} \bdot (\alpha^{x}\tau)^{-1}$, and in view of (\ref{eq:Da3}),
\[
  z_0 := \big( \alpha \bdot \tau \bdot \alpha^{x}\tau) \big) \bdot \big( \alpha^{-1} \bdot \tau^{-1} \bdot (\alpha^{x}\tau)^{-1} \big) \bdot \big( \alpha \bdot \alpha^{-1} \big)^{[2]} \in \mathsf Z (U \bdot V) \,\, \mbox{ with } \,\, |z_0| = 4 \,,
\]
again a contradiction.

If $U = \alpha^{2} \bdot \alpha^{[2]} \bdot \tau^{[2]}$ (resp., $U = \alpha^{2} \bdot \alpha \bdot \alpha^{-1} \bdot \tau \bdot \tau^{-1}$), then $V = \alpha^{2} \bdot (\alpha^{-1})^{[2]} \bdot (\tau^{-1})^{[2]}$ (resp., $V = U$), and in view of (\ref{eq:Da3}),
\[
  z_0 := \big( \alpha^{2} \bdot \tau^{[2]} \big) \bdot \big( \alpha^{2} \bdot (\tau^{-1})^{[2]} \big) \bdot \big( \alpha \bdot \alpha^{-1} \big)^{[2]} \in \mathsf Z (U \bdot V) \,\, \mbox{ with } \,\, |z_0| = 4 \,,
\]
again a contradiction.

Thus, we may assume that $|U| = 6$ and $|V| = 5$.
In view of (\ref{eq:Da6}), there exist generators $\alpha, \tau \in Q_8$ such that $U = \alpha^{[4]} \bdot \tau^{[2]}$.
Then, $|W_1| = \cdots = |W_4| = 2$ and $|W_5| = 3$.
Since $U = \alpha^{[4]} \bdot \tau^{[2]}$, and $z$ has 4 atoms of length 2, it follows by Theorem~\ref{thm:Davenport5} that $V$ has one of the forms listed in (\ref{eq:Da5}) with generators $\alpha^{-1}$ and $\tau^{-1}$.

If $V = (\alpha^{-1})^{[3]} \bdot \tau^{-1} \bdot \alpha^{x}\tau^{-1}$ for $x \in \{ 1, 3 \}$, then $W_5 = \alpha \bdot \tau \bdot \alpha^{x}\tau^{-1}$, and in views of (\ref{eq:Da4})--(\ref{eq:Da3}),
\[
  z_0 := \big( \alpha^{[2]} \bdot \tau^{[2]} \big) \bdot \big( \alpha^{-1} \bdot \tau^{-1} \bdot \alpha^{x}\tau^{-1} \big) \bdot \big( \alpha \bdot \alpha^{-1} \big)^{[2]} \in \mathsf Z (U \bdot V) \,\, \mbox{ with } \,\, |z_0| = 4 \,,
\]
again a contradiction.

If $V = \alpha^{2} \bdot (\alpha^{-1})^{[2]} \bdot (\tau^{-1})^{[2]}$, then $W_5 = \alpha^{2} \bdot \alpha^{[2]}$, and in views of (\ref{eq:Da4})--(\ref{eq:Da3}),
\[
  z_0 := \big( \alpha^{[2]} \bdot \tau^{[2]} \big) \bdot \big( \alpha^{2} \bdot (\tau^{-1})^{[2]} \big) \bdot \big( \alpha \bdot \alpha^{-1} \big)^{[2]} \in \mathsf Z (U \bdot V) \,\, \mbox{ with } \,\, |z_0| = 4 \,,
\]
again a contradiction.
\end{proof}

\smallskip
\begin{theorem} \label{thm:Ca}~
We have that $\mathsf {Ca} (Q_8) = [2,4]$, $\daleth^{*} (Q_8) = [3,4]$, and $\Delta (Q_8) = [1,2]$.
\end{theorem}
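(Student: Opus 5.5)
The whole theorem reduces to the single upper bound $\mathsf c (Q_8) \le 4$, because Theorem~\ref{thm:Int}.1 already makes all three sets intervals with known minima. By Proposition~\ref{pro:min} and Theorem~\ref{thm:P}, $Q_8$ satisfies Property~{\bf P}. So $\Delta (Q_8)$ is an interval with minimum $1$, $\daleth^{*} (Q_8)$ is an interval with minimum $3$, and $\mathsf {Ca} (Q_8)$ is an interval with minimum $2$ (since $\mathsf D (Q_8) = 6 \ge 4$). Proposition~\ref{pro:c2} gives $\max \daleth^{*} (Q_8) = \daleth (Q_8) = 4$, hence $\daleth^{*} (Q_8) = [3,4]$. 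Proposition~\ref{pro:non-abel} gives $\mathsf c (Q_8) \ge 4$. By (\ref{eq:ine}), $4 = \daleth (Q_8) \le 2 + \max \Delta (Q_8) \le \mathsf c (Q_8)$, so $2 \in \Delta (Q_8)$. Thus once $\mathsf c (Q_8) \le 4$ is shown, we get $\mathsf {Ca} (Q_8) = [2,4]$ and $\max \Delta (Q_8) \le 2$, i.e.\ $\Delta (Q_8) = [1,2]$.

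For the upper bound I would use the standard reduction to minimal relations. Suppose $\mathsf c (Q_8) \ge 5$. Using the descent of Lemma~\ref{lem:Ca}.2 repeatedly (it lowers the catenary degree by at most one while shortening the sequence), I obtain some $S$ with $\mathsf c (S) = 5$. I take such an $S$ of minimal length. Then there are factorizations $z = U_1 \bdot \ldots \bdot U_k$ and $z' = V_1 \bdot \ldots \bdot V_\ell$ with $\gcd (z,z') = 1$, $\mathsf d (z,z') \ge 5$, and no $4$-chain between them. Minimality of $|S|$ forces a strong condition: every proper product-one subsequence $T \mid S$ that is a product of atoms appearing in some factorization of $S$ satisfies $\mathsf c (T) \le 4$. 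Hence if some factorization of $S$ shares an atom with $z$ and another shares one with $z'$, the chain can be patched. The standard argument then shows that in the critical case every factorization of $S$ is either $z$-like or $z'$-like. I would push this toward the case $k = 2$, i.e.\ $S = U \bdot V$ with $U, V \in \mathcal A (Q_8)$. When $k \ge 3$, I would pick any factorization $y$ of $U_1 \bdot U_2$ and apply minimality to $y \bdot U_3 \bdot \ldots \bdot U_k$ versus $z'$. This step is delicate: it needs the exchange to produce a factorization sharing an atom with $z'$, and I would try to extract that from Lemma~\ref{lem:fac} as in the proof of Lemma~\ref{lem:Ca}.2.

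In the case $S = U \bdot V$, a factorization of length at least $5$ requires, by Lemma~\ref{lem:max}.1, that $\max \mathsf L (U \bdot V) \le \min \{|U|,|V|\}$, so $|U|, |V| \in \{5,6\}$. For these pairs I would use the explicit lists (\ref{eq:Da6})--(\ref{eq:Da3}) and the constructions in the proofs of Lemma~\ref{lem:max}.2 and Proposition~\ref{pro:c2}. In each case they give an intermediate factorization $z_0$ of length $3$ or $4$ through which any long factorization connects to $U \bdot V$ in steps of size at most $4$. The remaining task is to show that every factorization of length $5$ or $6$ of such $U \bdot V$ (necessarily built from atoms of length $2$ and $3$) differs from some $z_0$ by at most $4$ atoms. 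This follows because its length-$2$ atoms $g \bdot g^{-1}$ can be kept in common with $z_0$.

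The main obstacle is the reduction from general $S$ to the two-atom situation, together with the case check over the forms of atoms of lengths $5$ and $6$. I would handle the case check by exploiting the symmetry of generator changes, as in Theorem~\ref{thm:Davenport5}. If the reduction resists, my fallback is a direct strong induction on $|S|$. I would use Property~{\bf P} via Lemma~\ref{lem:fac} to merge two terms $g_1 g_2 = g_0$, and I would bound the chain distances as in cases (a)--(c) of Lemma~\ref{lem:Ca}.2. There the only new input needed is that, for $Q_8$, the distance-$3$ steps combine with a distance of at most $4$ in $V_1 \bdot V_2$; I would establish this directly from $\mathsf D (Q_8) = 6$ and Lemma~\ref{lem:max}.
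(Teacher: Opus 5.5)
Your first paragraph is correct and matches the paper: everything reduces to $\mathsf c (Q_8) \le 4$. The gap is that this inequality is never actually proved.

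The step carrying all the weight is the passage from a minimal counterexample with $z = U_1 \bdot \ldots \bdot U_k$ and $z' = V_1 \bdot \ldots \bdot V_\ell$, where $k, \ell \ge 3$, to the case $S = U \bdot V$. You only announce this step, and it is not a general principle. In the numerical monoid $\langle 3,4 \rangle$, every sum of two atoms ($6,7,8$) has a unique factorization. Yet $12 = 3+3+3+3 = 4+4+4$ has exactly these two factorizations, so the catenary degree is at least $4$ although $\daleth = 0$.

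In a minimal counterexample, every factorization sharing an atom with $z$ is $4$-connected to $z$, and likewise for $z'$. (For this you need $S$ minimal among all $S$ with $\mathsf c (S) \ge 5$, not with $\mathsf c (S) = 5$.) Your exchanged factorization $y \bdot U_3 \bdot \ldots \bdot U_k$ still contains $U_3$, so it stays on the side of $z$. Nothing forces it to share an atom with $z'$. Lemma~\ref{lem:fac} does not supply such an atom either: it only yields an atom of $z$ with two terms lying in different atoms of $z'$, which is what feeds the merging construction.

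The fallback does not close the gap. Lifting a $4$-chain of $T = S \bdot (g_1 \bdot g_2)^{[-1]} \bdot g_0$ back to $S$ through Property~\textbf{P} loses one unit exactly in case (c) of the proof of Lemma~\ref{lem:Ca}.2. When a step of the chain in $T$ exchanges the atom containing $g_0$, its split contributes up to two atoms on each side. So a step of distance $4$ in $T$ can become one of distance $2 + 3 = 5$ in $S$. That is why Lemma~\ref{lem:Ca}.2 only gives $\mathsf c (T) \ge \mathsf c (S) - 1$, and it cannot bound $\mathsf c (S)$ by $4$ inductively. Your proposed extra input about $V_1 \bdot V_2$ concerns only the final link of the chain, not this step.

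What is missing is a $Q_8$-specific way to produce a proper subproduct that is divisible in $\mathcal B (Q_8)$. The paper proceeds as follows:
\begin{enumerate}
\item It takes $S$ of minimal length, and then $|z| + |z'|$ maximal.
\item It uses $\daleth (Q_8) = 4$ (Proposition~\ref{pro:c2}) to force $\mathsf L (V_i \bdot V_j) = \{2\}$ for all $i \ne j$; otherwise $z'$ could be lengthened by one step of distance at most $4$.
\item It uses Lemma~\ref{lem:max}.2 to get $|V_j| \le 4$ for $j \ge 2$.
\item It shows that some $V_j$ divides a partial product $U_1 \bdot \ldots \bdot U_i$ with $i \le k-1$ in $\mathcal B (Q_8)$, not merely in $\mathcal F (Q_8)$. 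Minimality of $|S|$, applied to the two complementary pieces, then yields the $4$-chain.
\end{enumerate}
Establishing that divisibility is the real content of the proof. It uses $|Q_8'| = 2$: the leftover $W$ satisfies $\pi (W) \subseteq Q_8'$, and one absorbs further $U_j$ with $|\pi (U_j)| = 2$ or $|\pi (W \bdot U_j)| = 2$. It also uses the fact that otherwise the remaining atoms lie in a cyclic maximal subgroup, and a parity count of the central element of order $2$.
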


\begin{proof}
It suffices to show that $\mathsf c (Q_8) \le 4$.
Indeed, in view of (\ref{eq:ine}), we obtain that $\daleth (Q_8) = \mathsf c (Q_8) = 4$ and $\max \Delta (Q_8) = 2$, and since $Q_8$ satisfies Property {\bf P} by Theorem~\ref{thm:P}, the assertion follows by Theorem~\ref{thm:Int}.

Assume to the contrary that $\mathsf c (Q_8) \ge 5$.
Let $S \in \mathcal B (Q_8)$ with minimal length such that there exist two factorizations $z, z' \in \mathsf Z (S)$ with $|z| \ge |z'|$ and $|z| + |z'|$ maximal for which there is no 4-chain of factorizations between $z$ and $z'$.
Then, $|z| \ge 5$, and since $1_{Q_8} \in \mathcal B (Q_8)$ is a prime element, we may assume that
\[
  z = U_1 \bdot \ldots \bdot U_k \quad \mbox{ and } \quad z' = V_1 \bdot \ldots \bdot V_{\ell}
\]
for $U_1, \ldots, U_k, V_1, \ldots, V_{\ell} \in \mathcal A (Q_8)$ with $|U_i| \ge 2$ and $|V_j| \ge 2$ for all $i, j$ and $\gcd (z, z') = 1$.
If $\ell = 1$, then $|z| = k = 1$, which contradicts $|z| \ge 5$.
Hence, we may assume that $\ell \ge 2$.

Suppose that $\max \mathsf L (V_i \bdot V_j) \ge 3$ for some distinct $i, j \in [1,\ell]$.
We denote by $z^{i,j}$ the subsequence of $z'$ obtained by deleting two atoms $V_i$ and $V_j$.
Since $\daleth (Q_8) = 4$ by Proposition~\ref{pro:c2}, there exists $z_0 \in \mathsf Z (V_i \bdot V_j)$ with $3 \le |z_0| \le 4$ and
\begin{equation} \label{eq:4-chain}~
  \mathsf d (z', z_0 \bdot z^{i,j}) = \mathsf d (V_i \bdot V_j, z_0) = |z_0| \le 4 \,.
\end{equation}
Since $|z| + |z_0 \bdot z^{i,j}| \gneq |z| + |z'|$, the maximality of $|z| + |z'|$ ensures that there is a 4-chain of factorizations of $S$ between $z$ and $z_0 \bdot z^{i,j}$.
Moreover, in view of (\ref{eq:4-chain}), we infer that there exists a 4-chain of factorizations of $S$ concatenating $z$ and $z'$, a contradiction.
Thus, we obtain that
\begin{equation} \label{eq:2atoms}~
  \mathsf L ( V_i \bdot V_j ) = \{ 2 \} \,\, \mbox{ for all distinct } \,\, i, j \in [1,\ell] \,.
\end{equation}
If $\ell = 2$, then $U_1 \bdot \ldots \bdot U_k = V_1 \bdot V_2$, and it follows that $k \le \daleth (Q_8) = 4$, which again contradicts $|z| \ge 5$.
Thus, we further assume that $\ell \ge 3$, and after renumbering if necessary, we assume that $|V_1| \ge |V_2| \ge \cdots \ge |V_{\ell}| \ge 2$.
In view of (\ref{eq:2atoms}), by Lemma~\ref{lem:max}.2, there exists at most one atom of length at least 5 among $V_1, \ldots, V_{\ell}$, and hence we suppose that
\begin{equation} \label{eq:length}~
  |V_j| \le 4 \,\, \mbox{ for all } \,\, j \in [2,\ell] \,.
\end{equation}
We proceed with the following assertion.

\smallskip
\begin{enumerate}
\item[\namedlabel{itm:A1}{\bf A1}.] If there exists some $j \in [1,\ell]$ such that $V_j \mid U_1 \bdot \ldots \bdot U_{i}$ in $\mathcal B (Q_8)$ for some $i \in [1,k-1]$, then there exists a 4-chain of factorizations of $S$ between $z$ and $z'$.
\end{enumerate}

\smallskip
\begin{proof}[Proof of \ref{itm:A1}]
After renumbering if necessary, we may assume that $j = 1$, and there exists some $i \in [1,k-1]$ such that
\[
  U_1 \bdot \ldots \bdot U_i = V_1 \bdot W \,\, \mbox{ for some } \,\, W \in \mathcal B (Q_8) \,.
\]
By the minimality of $|S|$, there exists a 4-chain of factorizations $y_0, \ldots, y_r$ between $y_0 = U_1 \bdot \ldots \bdot U_i$ and $y_r = V_1 \bdot W$, and also a 4-chain of factorizations $w_0, \ldots, w_t$ between $w_0 = W \bdot U_{i+1} \bdot \ldots \bdot U_k$ and $w_t = V_2 \bdot \ldots \bdot V_{\ell}$.
Hence,
\[
  z = y_0 \bdot U_{i+1} \bdot \ldots \bdot U_k \,, \,\, y_1 \bdot U_{i+1} \bdot \ldots \bdot U_k \,,  \,\, \ldots \,\,, \,\, y_r \bdot U_{i+1} \bdot \ldots \bdot U_k = V_1 \bdot w_0 \,, \,\, V_1 \bdot w_1 \,, \,\, \ldots \,\,, \,\, V_1 \bdot w_t = z'
\]
is a 4-chain of factorizations of $S$ concatenating $z$ and $z'$.  \qedhere[\ref{itm:A1}]
\end{proof}

\smallskip
In view of \ref{itm:A1}, it suffices to show that there exists some $j \in [1,\ell]$ such that $V_j \mid U_1 \bdot \ldots \bdot U_i$ in $\mathcal B (Q_8)$ for some $i \in [1,k-1]$.
By renumbering if necessary, we suppose that $V_2 \mid U_1 \bdot \ldots \bdot U_{|V_2|}$ in $\mathcal F (Q_8)$.
If all terms in $V_2$ split completely into atoms $U_i$ for all $i \in [1,|V_2|]$, then since $V_2$ is product-one, we may write $V_2 = g_1 \bdot \ldots \bdot g_{|V_2|}$ with $g_1 \cdots g_{|V_2|} = 1_{Q_8}$ and $g_i \mid U_i$ for all $i \in [1,|V_2|]$.
Then, for each $i \in [1,|V_2|]$, the product-one equation of $U_i$ ensures that $g^{-1}_i \in \pi \big( U_i \bdot g^{[-1]}_i \big)$.
It follows that
\[
  1_{Q_8} = g^{-1}_{|V_2|} \cdots g^{-1}_1 \in \small{\prod}_{i \in [1,|V_2|]} \pi \big( U_i \bdot g^{[-1]}_i \big) \subseteq \pi \Big( {\small \prod}^{\bullet}_{i \in [1,|V_2|]} (U_i \bdot g^{[-1]}_i) \Big) = \pi \big( U_1 \bdot \ldots \bdot U_{|V_2|} \bdot V^{[-1]}_2 \big) \,.
\]
This means that $V_2 \mid U_1 \bdot \ldots \bdot U_{|V_2|}$ in $\mathcal B (Q_8)$, which, in view of~\ref{itm:A1}, yields a contradiction.
Hence, there must exist some $i \in [1,|V_2|]$ such that $U_i$ contains at least two terms from $V_2$.
If $|V_2| = 2$ and $V_2 \in \mathcal A (Q'_8)$, then $V_2$ consists of only non-identity central element.
Note that the multiplicity of non-identity central element in atoms of length at least 3 is at most 1.
Since $\gcd (z, z') = 1$, this allows us to assume that two terms in $V_2$ must split into distinct atoms, say $U_1$ and $U_2$.
Hence, the same argument shows that $V_2 \mid U_1 \bdot U_2$ in $\mathcal B (Q_8)$, which, in view of \ref{itm:A1}, again leads to a contradiction.
Therefore, we can exclude the case where $V_2 \in \mathcal A (Q'_8)$ of length 2.
Thus, regardless of the length of $V_2$, we assume by renumbering if necessary that
\[
  V_2 \mid U_1 \bdot \ldots \bdot U_{\varepsilon} \,\, \mbox{ in } \,\, \mathcal F (Q_8) \,,
\]
where $\varepsilon \in [1, |V_2|-1]$.
Then, $W = U_1 \bdot \ldots \bdot U_{\varepsilon} \bdot V^{[-1]}_2 \in \mathcal F (Q_8)$ with $\pi (W) \subseteq Q'_8$.
In view of \ref{itm:A1}, we further assume that $W \notin \mathcal B (Q_8)$, whence $\pi (W)$ is a singleton consisting of non-identity central element in $Q_8$.

If $|\pi (U_j)| = 2$ for some $j \in [\varepsilon+1,k]$, then we infer that $W' := W \bdot U_j \in \mathcal B (Q_8)$.
If $|\pi (U_i)| = 1$ for all $i \in [\varepsilon+1,k]$ and $|\pi (W \bdot U_j)| = 2$ for some $j \in [\varepsilon+1,k]$, then we infer that $W' := W \bdot U_j \in \mathcal B (Q_8)$.
In either case, we may assume by renumbering if necessary that $V_2 \mid U_1 \bdot \ldots \bdot U_{\varepsilon+1}$ in $\mathcal B (Q_8)$, which, in view of \ref{itm:A1}, yields a contradiction.

Suppose now that $|\pi (U_j)| = 1$ and $|\pi (W \bdot U_j)| = 1$ for all $j \in [\varepsilon+1,k]$.
We will use the following observation repeatedly.

\smallskip
\begin{enumerate}
\item[\namedlabel{itm:A2}{\bf A2}.] If $\varepsilon \le 2$ and $|W| = 1$, then there exists a maximal subgroup $H$ of $Q_8$ such that $U_{\varepsilon+1}, \ldots, U_k$, $V_1, V_3, \ldots, V_{\ell}$ are atoms over $H$ and at least one term in $U_{\varepsilon+1} \bdot \ldots \bdot U_k$ is a generator of $H$.
\end{enumerate}

\smallskip
\begin{proof}[Proof of \ref{itm:A2}]
Suppose that $\varepsilon \le 2$.
Since $U_1, \ldots, U_{\varepsilon}, V_2 \in \mathcal B (Q_8)$ and $W \notin \mathcal B (Q_8)$, it follows that $\pi (W)$ is a singleton consisting of the non-identity central element in $Q_8$.
If $|\pi (U_n \bdot U_m)| = 2$ for some $n, m \in [\varepsilon+1,k]$, say $U_{\varepsilon+1}$ and $U_{\varepsilon+2}$, then $1_{Q_8} = z z \in \pi (W) \pi (U_{\varepsilon+1} \bdot U_{\varepsilon+2}) \subseteq \pi (W \bdot U_{\varepsilon+1} \bdot U_{\varepsilon+2})$, and so $W' := W \bdot U_{\varepsilon+1} \bdot U_{\varepsilon+2} \in \mathcal B (Q_8)$, which ensures that $V_2 \mid U_1 \bdot \ldots \bdot U_{\varepsilon+2}$ in $\mathcal B (Q_8)$.
Since $\varepsilon+ 2 \le 4$ and $k \ge 5$, \ref{itm:A1} allows us to construct a 4-chain of factorizations of $S$ between $z$ and $z'$, a contradiction.
Thus, we have that $|\pi (U_m \bdot U_n)| = 1$ for all distinct $m, n \in [\varepsilon+1,k]$.
Since $|\pi (U_j)| = 1$ and $|\pi (W \bdot U_j)| = 1$ for all $j \in [\varepsilon+1,k]$, we infer that $U_{\varepsilon+1}, \ldots, U_k, V_1, V_3, \ldots V_{\ell}$ are sequences over an abelian subgroup $H$ of $Q_8$.
Since $Q_8$ is a minimal non-abelian group (see Proposition~\ref{pro:min}), $H$ can be considered as a maximal subgroup, which is actually cyclic group of order 4.
Let $Q'_8 = \{ 1_{Q_8}, z \}$.
Since any atom of length at least 3 can have at most 1 term of $z$ and $|W| = 1$, we infer that $|(U_1 \bdot \ldots \bdot U_{\varepsilon})_{Q'_8}| = |(V_2 \bdot W)_{Q'_8}| = t$ must be odd.
Hence, if $U_{\varepsilon+1}, \ldots, U_k, V_1, V_3, \ldots, V_{\ell}$ are all sequences over $Q'_8$, then $U_{\varepsilon+1} = \cdots = U_k = V_1 = V_3 = \cdots V_{\ell} = z^{[2]}$.
This means that the number of $z$ in $U_1 \bdot \ldots \bdot U_k$ is $2(k-\varepsilon) + t$, which is odd, whereas it is $2(\ell - 1) + (t-1)$, which is even, in $V_1 \bdot \ldots \bdot V_{\ell}$, leading a contradiction.
Thus, at least one term in $U_{\varepsilon+1} \bdot \ldots \bdot U_k$ is in $H \setminus Q'_8$, and since $H$ is cyclic, such term must be a generator of $H$. \qedhere[\ref{itm:A2}]
\end{proof}

Now we proceed with the case distinctions.

\medskip
\noindent
{\bf CASE 1.} $W$ consists of only non-identity central element $z \in Q'_8 \setminus \{ 1_{Q_8} \}$.

\smallskip
If $|V_2| = 2$, then $\varepsilon = 1$, and since $V_2 \notin \mathcal A (Q'_8)$, we have that $V_2 = g \bdot g^{-1}$ for some $g \in Q_8 \setminus Q'_8$, and since $\gcd (z,z') = 1$, it follows that $|U_1| \ge 3$.
Since $W = z^{[|W|]}$, in views of (\ref{eq:Da6})--(\ref{eq:Da3}), $|U_1| = 3$ and $U_1 = z \bdot V_2$, which implies that $\pi (U_1) = \{ z \}$, a contradiction to $U_1 \in \mathcal A (Q_8)$.

Thus, we have $3 \le |V_2| \le 4$, and since $|W|$ must be odd, it follows that $|W| \in \{ 1, 3, 5 \}$.
If $|W| = 5$, then since any atom of length at least 3 can have at most 1 term of non-identity central element, we must have that $|V_2| = 4$ and $\varepsilon = |V_2|-1=3$, which means that all $U_1, U_2, U_3$ contain at least one term from $V_2$.
By renumbering if necessary, we may assume that $U_1 = U_2 = z^{[2]}$ and $z \mid U_3$, and thus $z^{[2]} \mid V_2$, a contradiction to $V_2 \in \mathcal A (G)$ with $|V_2| = 4$.

Suppose now that $|W| = 1$.
Since $3 \le |V_2| \le 4$, we first consider the case $|V_2| = 4$.
Then, if $\varepsilon = 3$, then $V_2 \mid U_1 \bdot U_2 \bdot U_3$ implies that $6 \le \Sigma_{i \in [1,3]} |U_i| = |V_2| + |W| = 5$, a contradiction, whence we may assume that $\varepsilon \le 2$.
If $\varepsilon = 1$, then in view of (\ref{eq:Da5}), $V_2 = \alpha^{[2]} \bdot \tau^{[2]}$ and $U_1 = z \bdot V_2$ for some generators $\alpha, \tau \in Q_8$.
In view of \ref{itm:A2}, we infer that $U_2, \ldots, U_k, V_1, V_3, \ldots, V_{\ell}$ are sequences over a cyclic subgroup $H$ of $Q_8$ of order 4, and at least one term in $U_2 \bdot \ldots \bdot U_k$ is a generator of $H$.
Let $V_t$ be such that at least one term is a generator of $H$ for some $t \in [1,\ell]$ with $t \neq 2$.
If $V_t \mid U_2 \bdot \ldots \bdot U_k$ in $\mathcal F (Q_8)$, then as sequences over the abelian group $H$, the divisibility holds in $\mathcal B (Q_8)$, which, in view of \ref{itm:A1}, yields a contradiction.
Thus, $V_t = V_{t_1} \bdot V_{t_2}$ for some non-trivial sequences $V_{t_1}, V_{t_2} \in \mathcal F (H)$ with $V_{t_1} \mid U_1$ and $V_{t_2} \mid U_2 \bdot \ldots \bdot U_k$.
Since $|V_t| \le 4$, by renumbering if necessary, we may assume that $V_t \mid U_1 \bdot \ldots \bdot U_4$ in $\mathcal F (Q_8)$.
If we set $W' := U_1 \bdot \ldots \bdot U_4 \bdot V^{[-1]}_t$, then clearly $|\pi (W') | = 2$, and so $V_t \mid U_1 \bdot \ldots \bdot U_4$ in $\mathcal B (Q_8)$, which, in view of \ref{itm:A1}, yields a contradiction.
Hence, we consider the case $\varepsilon = 2$, and by renumbering if necessary, we set $|U_1| = 2$ and $|U_2| = 3$.
If $V_2$ splits equally into $U_1$ and $U_2$, then since $U_1 \mid V_2$, it follows that $W \mid U_2$ and $U_1 = h \bdot h^{-1}$ for some $h \in Q_8 \setminus Q'_8$, which, in view of (\ref{eq:Da4}), ensures that $V_2 = g^{[2]} \bdot h \bdot h^{-1}$ for some $g \in Q_8$ with $Q_8 = \langle g, h \rangle$, i.e., $U_2 = g^{[2]} \bdot z$.
In view of \ref{itm:A2}, we infer that $U_3, \ldots, U_k, V_1, V_3, \ldots, V_{\ell}$ are sequences over a cyclic subgroup $H$ of $Q_8$ of order 4, and at least one term in $U_3 \bdot \ldots \bdot U_k$ is a generator of $H$.
Let $V_t$ be such that at least one term is a generator of $H$ for some $t \in [1,\ell]$ with $t \neq 2$.
Suppose that $|V_t| = 4$ for some $t \in [1, \ell]$ with $t \neq 2$, i.e., $V_t = \alpha^{[4]}$ for a generator $\alpha$ of $H$.
By the same argument as used before, \ref{itm:A1} guarantees that $V_t = V_{t_1} \bdot V_{t_2}$ for some non-trivial subsequences $V_{t_1}, V_{t_2} \in \mathcal F (H)$ with $V_{t_1} \mid U_1 \bdot U_2$ and $V_{t_2} \mid U_3 \bdot \ldots \bdot U_k$.
According to the structures of $U_1$ and $U_2$, we obtain that $|V_{t_1}| \le 2$.
If $|V_{t_1}| = 1$, then $V_{t_2} = \alpha^{[3]}$.
If $V_{t_2}$ does not divide at most 2 atoms among $U_3, \ldots, U_k$, then since $V_t \nmid U_3 \bdot \ldots \bdot U_k$, we infer that $U_j \in \{ \alpha \bdot \alpha^{-1}, z \bdot z \}$ for all $j \in [3,k]$ and there are only 3 atoms having the form $\alpha \bdot \alpha^{-1}$, which means that the number of $\alpha$ in $W \bdot U_3 \bdot \ldots \bdot U_k$ must be 3.
Since $W \bdot U_3 \bdot \ldots \bdot U_k = V_1 \bdot V_3 \bdot \ldots \bdot V_{\ell}$ and $V_t = \alpha^{[4]}$, this is impossible.
Thus, we  have that $|V_{t_1}| = 2$.
In this case, $V_{t_1} \mid U_2$, and so we may assume by renumbering if necessary that $V_t \mid U_2 \bdot U_3 \bdot U_4$ in $\mathcal F (Q_8)$.
If we  set $W' := U_2 \bdot U_3 \bdot U_4 \bdot V^{[-1]}_t$, then clearly $| \pi (W' \bdot U_1)| = 2$, and thus $V_t \mid U_1 \bdot \ldots \bdot U_4$ in $\mathcal B (Q_8)$, which, in view of \ref{itm:A1}, yields a contradiction.
Suppose now that $|V_t| \le 3$.
By the same argument as used before, \ref{itm:A1} guarantees that $V_t = V_{t_1} \bdot V_{t_2}$ for some non-trivial sequences $V_{t_1}, V_{t_2} \in \mathcal F (H)$ with $V_{t_1} \mid U_1 \bdot U_2$ and $V_{t_2} \mid U_3 \bdot \ldots \bdot U_k$.
By renumbering and changing generators if necessary, we may assume that $V_t \mid U_1 \bdot U_2 \bdot U_3$ (if $|V_{t_1}| = 2$) or $V_t \mid U_2 \bdot U_3 \bdot U_4$ (if $|V_{t_1}| = 1$).
To be on the safe side, if we set $W' := U_1 \bdot U_2 \bdot U_3 \bdot U_4 \bdot V^{[-1]}_t$, then clearly $| \pi (W') | = 2$, and so $V_t \mid U_1 \bdot \ldots \bdot U_4$ in $\mathcal B (Q_8)$, which, in view of \ref{itm:A1}, yields a contradiction.
Hence, it remains to consider the case where $U_1$ contains only one term and $U_2$ contains exactly three terms from $V_2$.
Since $|U_2| = 3$, $U_2 \mid V_2$, and in view of (\ref{eq:Da4}), we obtain that $V_2 = z \bdot U_2$, and so $U_1 = z^{[2]}$.
In view of \ref{itm:A2}, we infer that $U_3, \ldots, U_k, V_1, V_3, \ldots, V_{\ell}$ are sequences over a cyclic subgroup $H$ of $Q_8$ of order 4, and at least one term in $U_3 \bdot \ldots \bdot U_k$ is a generator of $H$.
Since $U_1 = z^{[2]}$ and $\gcd (z, z') = 1$, there exists $t \in [1,\ell]$ with $t \neq 2$ such that $|V_t| = 3$, so that $V_t = z \bdot \alpha^{[2]}$ for a generator $\alpha$ of $H$.
By the same argument as used before, \ref{itm:A1} guarantees that $V_t = V_{t_1} \bdot V_{t_2}$ for some non-trivial sequences $V_{t_1}, V_{t_2} \in \mathcal F (H)$ with $V_{t_1} \mid U_1 \bdot U_2$ and $V_{t_2} \mid U_3 \bdot \ldots \bdot U_k$.
According to the structures of $U_1$, $U_2$, and $V_2$, it follows by renumbering if necessary that $V_t \mid U_1 \bdot U_3 \bdot U_4$, or $V_t \mid U_2 \bdot U_3 \bdot U_4$, or $V_t \mid U_1 \bdot U_2 \bdot U_3$ in $\mathcal F (Q_8)$.
To be on the safe side, we may assume that $V_t \mid U_1 \bdot \ldots \bdot U_4$ in $\mathcal F (Q_8)$.
If we set $W' := U_1 \bdot \ldots \bdot U_4 \bdot V^{[-1]}_t$, then since $U_2$ consists of non-commuting terms, $|\pi (W')| = 2$, and so $V_t \mid U_1 \bdot \ldots \bdot U_4$ in $\mathcal B (Q_8)$, which, in view of \ref{itm:A1}, yields a contradiction.

Now we consider the case $|V_2| = 3$.
If $\varepsilon = 2$, then $|U_1| = |U_2| = 2$, and since $|W| = 1$, we may assume that $U_1 = z^{[2]}$ and $z \mid V_2$.
In view of (\ref{eq:Da3}), $V_2 = z \bdot g^{[2]}$ for some generator $g$ of $Q_8$.
This implies that $U_2 = V_2 \bdot z^{[-1]} = g^{[2]}$, but $U_2$ is not a product-one sequence, a contradiction.
Thus, by renumbering if necessary, we may assume that $V_2 \mid U_1$, so that $U_1 = z \bdot V_2 \in \mathcal A (Q_8)$ of length 4.
In view of (\ref{eq:Da4}), $V_2 = \alpha \bdot \tau \bdot \alpha^{x}\tau$ for some generators $\alpha, \tau \in Q_8$ with $x \in \{1, 3\}$.
In view of \ref{itm:A2}, we infer that $U_2, \ldots, U_k, V_1, V_3, \ldots, V_{\ell}$ are sequences over a cyclic subgroup $H$ of $Q_8$ of order 4, and at least one term in $U_2 \bdot \ldots \bdot U_k$ is a generator of $H$.
By the same argument as used before, \ref{itm:A1} guarantees that there exists some $t \in [1,\ell]$ with $t \neq 2$ such that $V_t = V_{t_1} \bdot V_{t_2}$ for some non-trivial sequences $V_{t_1}, V_{t_2} \in \mathcal F (H)$ with $V_{t_1} \mid U_1$ and $V_{t_2} \mid U_2 \bdot \ldots \bdot U_k$.
Since $|V_t| \le 4$, we may assume by renumbering if necessary that $V_t \mid U_1 \bdot \ldots \bdot U_4$ in $\mathcal F (Q_8)$.
According to the structure of $U_1$, if we set $W' := U_1 \bdot \ldots \bdot U_4 \bdot V^{[-1]}_t$, then clearly $|\pi (W')| = 2$, and so $V_t \mid U_1 \bdot \ldots \bdot U_4$ in $\mathcal B (Q_8)$, which, in view of \ref{itm:A1}, yields a contradiction.

Suppose finally that $|W| = 3$.
If $|V_2| = 3$, then by renumbering if necessary, we may assume that $\varepsilon = 1$ and $|W| = 1$, because $\varepsilon = 2$ ensures that $U_1 \bdot U_2 = V_2 \bdot W$ with $z \mid U_1$ and $U_2 = z^{[2]}$, so that $V_2 \mid U_1$ in $\mathcal F (Q_8)$, and we have already shown that this case leads to a contradiction.
Thus, we assume that $|V_2| = 4$, and write $V_2 = g_1 \bdot \ldots \bdot g_4$ for some $g_1, \ldots, g_4 \in Q_8$.
If $\varepsilon \le 2$, then since $|V_2| = 4$ and $W = z^{[3]}$, then it follows by renumbering if necessary that $V_2 \mid U_1$ in $\mathcal F (Q_8)$, which allows us to assume that $\varepsilon = 1$ and $|W| = 1$, a case in which we have already obtained a contradiction.
Hence, it remains to consider the case $\varepsilon = 3$, which means that all $U_1, U_2, U_3$ contain at least one term from $V_2$.
If $g_i \in Q_8 \setminus Q'_8$ for all $i \in [1,4]$, then $|U_i| \ge 3$ for all $i \in [1,3]$, but $U_1 \bdot U_2 \bdot U_3 = V_2 \bdot W$ ensures that $9 \le \sum_{i \in [1,3]} |U_i| = |V_2| + |W| = 7$, a contradiction.
Thus, we suppose that $g_1 = z$ and $g_2, g_3, g_4 \in Q_8 \setminus Q'_8$.
By renumbering if necessary, we infer that $U_1 = z^{[2]}$, $z \mid U_i$ for all $i \in [2,3]$, some $U_i$ contains two terms from $V_2$, say $z \bdot g_2 \bdot g_3 \mid U_2$ and $z \bdot g_4 \mid U_3$.
Since $|V_2 \bdot W| = 7$, we obtain that $U_1 = z^{[2]}$, $U_2 = z \bdot g_2 \bdot g_3$, and $U_3 = z \bdot g_4$, and hence $g_4 = z \in Q'_8$, which ensures that $z^{[2]} \mid V_2$, a contradiction to $V_2 \in \mathcal A (Q_8)$ with $|V_2| = 4$.

\smallskip
\noindent
{\bf CASE 2.} $W$ contains at least one term from $Q_8 \setminus Q'_8$.

\smallskip
Then, since $|\pi(W \bdot U_j)| = 1$ for all $j \in [\varepsilon+1,k]$, it follows that $W, U_{\varepsilon+1}, \ldots, U_k$ are sequences over an abelian subgroup $H$.
Moreover, in views of (\ref{eq:Da6})--(\ref{eq:Da5}), we suppose that $|V_1| \le 4$, for otherwise there must be at least one $U_j$ such that $|\pi (W \bdot U_j )| = 2$, a contradiction.
From now on, by renumbering if necessary, we replace $V_2$ with $V_1$.
Since $U_1 \bdot \ldots \bdot U_{\varepsilon} = V_1 \bdot W$ and $W \notin \mathcal B (Q_8)$, we infer that $V_1 \notin \mathcal F (H)$, so that $V_1$ contains at least one term from $Q_8 \setminus H$.
By changing generators if necessary, we may assume that $Q_8 = \langle \alpha, \tau \mid \alpha^{4} = 1_{Q_8}, \tau^{2} = \alpha^{2}, \text{ and } \tau \alpha = \alpha^{-1} \tau \rangle$ and $H = \langle \alpha \rangle$, so that $|(V_1)_{Q_8 \setminus \langle \alpha \rangle}| \ge 2$ is even.
According to $|\pi (W \bdot U_j)| = 1$ for all $j \in [\varepsilon+1,k]$, we infer that $V_i \in \mathcal A \big( \langle \alpha \rangle \big)$ for all $i \in [2,\ell]$.
Since $W \in \mathcal F \big( \langle \alpha \rangle \big)$, each $U_1, \ldots, U_{\varepsilon}$ cannot contain odd terms from $Q_8 \setminus \langle \alpha \rangle$ in $V_1$.
Hence, it follows by renumbering if necessary that $(V_1)_{Q_8 \setminus \langle \alpha \rangle} \mid U_1$ or $(V_1)_{Q_8 \setminus \langle \alpha \rangle} \mid U_1 \bdot U_2$.

Suppose first that $(V_1)_{Q_8 \setminus \langle \alpha \rangle} \mid U_1$, so that $U_2, \ldots, U_k \in \mathcal A \big( \langle \alpha \rangle \big)$.
Since $W$ contains at least one generator of $H$, we assume that $\alpha \mid W$, and so there exists some $t \in [2,\ell]$ such that $\alpha \mid V_t$.
By the same argument as used before, \ref{itm:A1} guarantees that $V_t = V_{t_1} \bdot V_{t_2}$ for some non-trivial sequences $V_{t_1}, V_{t_2} \in \mathcal F \big( \langle \alpha \rangle \big)$ with $V_{t_1} \mid U_1$ and $V_{t_2} \mid U_{2} \bdot \ldots \bdot U_k$.
Since $|V_t| \le 4$, after renumbering if necessary, we suppose that $V_t \mid U_1 \bdot \ldots \bdot U_4$ in $\mathcal F (Q_8)$.
If we set $W' := U_1 \bdot \ldots \bdot U_4 \bdot V^{[-1]}_t$, then since $(V_1)_{Q_8 \setminus \langle \alpha \rangle} \mid U_1$, it is clear that $|\pi(W')| = 2$, and hence $V_t \mid U_1 \bdot \ldots \bdot U_4$ in $\mathcal B (Q_8)$, which, in view of \ref{itm:A1}, yields a contradiction.

We now consider the case $(V_1)_{Q_8 \setminus \langle \alpha \rangle} \mid U_1 \bdot U_2$.
Then, we must have that $|V_1| = |(V_1)_{Q_8 \setminus \langle \alpha \rangle}| = 4$ and each $U_i$ contains only two terms from $V_1$.
Moreover, $U_3, \ldots, U_k \in \mathcal A \big( \langle \alpha \rangle \big)$, and since $W$ contains at least one generator of $H$, we assume that $\alpha \mid W$.
Hence, there exists some $t \in [2,\ell]$ such that $\alpha \mid V_t$.
By the same argument as used before, \ref{itm:A1} guarantees that $V_t = V_{t_1} \bdot V_{t_2}$ for some non-trivial sequences $V_{t_1}, V_{t_2} \in \mathcal F \big( \langle \alpha \rangle \big)$ with $V_{t_1} \mid U_1 \bdot U_2$ and $V_{t_2} \mid U_{3} \bdot \ldots \bdot U_k$.
Suppose that $|V_t| = 4$, so that $V_t \in \{ \alpha^{[4]}, (\alpha^{-1})^{[4]} \}$.
If $|V_{t_1}| \ge 2$, then after renumbering if necessary, we suppose that $V_t \mid U_1 \bdot \ldots \bdot U_4$ in $\mathcal F (Q_8)$.
If we set $W' := U_1 \bdot \ldots \bdot U_4 \bdot V^{[-1]}_t$, then since $V_1 \mid U_1 \bdot U_2$, it follows that $|\pi(W')| = 2$, and thus $V_t \mid U_1 \bdot \ldots \bdot U_4$ in $\mathcal B (Q_8)$, which, in view of \ref{itm:A1}, leads to a contradiction.
If $|V_{t_1}| = 1$, then by renumbering if necessary, we may assume that $V_t \mid U_2 \bdot \ldots \bdot U_5$ in $\mathcal F (Q_8)$.
Since $U_2$ contains two terms from $Q_8 \setminus \langle \alpha \rangle$ and $\alpha^{[3]} \mid U_3 \bdot \ldots \bdot U_5$, it follows that $W' := U_2 \bdot \ldots \bdot U_5 \bdot V^{[-1]}_t$ must contain at least one $\alpha$ and two terms from $Q_8 \setminus \langle \alpha \rangle$.
Thus, $|\pi (W')| = 2$, and hence $V_t \mid U_2 \bdot \ldots \bdot U_5$ in $\mathcal B (Q_8)$, which, in view of \ref{itm:A1}, yields a contradiction.
Now, if $|V_t| \le 3$, then after renumbering if necessary, we suppose that $V_t \mid U_1 \bdot \ldots \bdot U_4$ in $\mathcal F (Q_8)$.
If we set $W' := U_1 \bdot \ldots \bdot U_4 \bdot V^{[-1]}_t$, then since $V_1 \mid U_1 \bdot U_2$, it follows that $|\pi(W')| = 2$, and thus $V_t \mid U_1 \bdot \ldots \bdot U_4$ in $\mathcal B (Q_8)$, which, in view of \ref{itm:A1}, leads to a contradiction.
\end{proof}

\smallskip
\noindent
{\bf Acknowledgement.} This paper was completed during the author's research visit to the AlgNTh group at University of Graz. The author is grateful to Alfred Geroldinger for his helpful comments on a preliminary version of this paper, for the excellent working conditions, and for all his hospitality.


\smallskip
\providecommand{\bysame}{\leavevmode\hbox to3em{\hrulefill}\thinspace}
\providecommand{\MR}{\relax\ifhmode\unskip\space\fi MR }
\providecommand{\MRhref}[2]{%
  \href{http://www.ams.org/mathscinet-getitem?mr=#1}{#2}
}
\providecommand{\href}[2]{#2}

\medskip

\end{document}